\documentclass[11pt]{amsart}

\usepackage{
    amsmath,
    amsfonts,
    amssymb,
    amsthm,
    amscd,
    comment,
    enumitem,
    etoolbox,
    textcomp,   
    gensymb,    
    mathtools,
    mathdots,
    stmaryrd    
}
\usepackage[dvipsnames]{xcolor}
\usepackage{tipa}

\usepackage[letterpaper,margin=1in]{geometry}
\usepackage[T1]{fontenc}
\usepackage{lmodern}            
\usepackage{dsfont}             
\usepackage[
    unicode=true,
    pdfencoding=auto,
    psdextra,
    colorlinks=true,
    linkcolor=blue,
    citecolor=blue,
    urlcolor=blue,
    breaklinks=true,
    pdfdisplaydoctitle=true
]{hyperref}

\DeclareFontFamily{OT1}{pzc}{}
\DeclareFontShape{OT1}{pzc}{m}{it}{<-> s * [1.10] pzcmi7t}{}
\DeclareMathAlphabet{\mathpzc}{OT1}{pzc}{m}{it}

\usepackage{zref-clever}

\zcsetup{
  cap,                      
  nameinlink=false,         
  lastsep = {, and }        
}

\zcRefTypeSetup{assumption}{
    Name-sg = Assumption ,
    name-sg = assumption ,
    Name-pl = Assumptions ,
    name-pl = assumptions ,
}
\zcRefTypeSetup{cor}{
    Name-sg = Corollary ,
    name-sg = corollary ,
    Name-pl = Corollaries ,
    name-pl = corollaries ,
}
\zcRefTypeSetup{defin}{
    Name-sg = Definition ,
    name-sg = definition ,
    Name-pl = Definitions ,
    name-pl = definitions ,
}
\zcRefTypeSetup{enumi}{
    Name-sg = {} ,
    name-sg = {} ,
    Name-pl = {} ,
    name-pl = {} ,
}
\zcRefTypeSetup{equation}{
    Name-sg = {} ,
    name-sg = {} ,
    Name-pl = {} ,
    name-pl = {} ,
}
\zcRefTypeSetup{eg}{
    Name-sg = Example ,
    name-sg = example ,
    Name-pl = Examples ,
    name-pl = examples ,
}
\zcRefTypeSetup{lem}{
    Name-sg = Lemma ,
    name-sg = lemma ,
    Name-pl = Lemmas ,
    name-pl = lemmas ,
}
\zcRefTypeSetup{prop}{
    Name-sg = Proposition ,
    name-sg = proposition ,
    Name-pl = Propositions ,
    name-pl = propositions ,
}
\zcRefTypeSetup{rem}{
    Name-sg = Remark ,
    name-sg = remark ,
    Name-pl = Remarks ,
    name-pl = remarks ,
}
\zcRefTypeSetup{theo}{
    Name-sg = Theorem ,
    name-sg = theorem ,
    Name-pl = Theorems ,
    name-pl = theorems ,
}

\zcsetup{
  countertype = {
    enumi=item, enumii=item, enumiii=item, enumiv=item
  },
  counterresetby = {
    enumii=enumi, enumiii=enumii, enumiv=enumiii
  }
}
\zcRefTypeSetup{item}{
    Name-sg = {} ,
    name-sg = {} ,
    Name-pl = {} ,
    name-pl = {} ,
}

\AddToHook{env/assumption/begin}{\zcsetup{countertype={theo=assumption}}}
\AddToHook{env/cor/begin}{\zcsetup{countertype={theo=cor}}}
\AddToHook{env/defin/begin}{\zcsetup{countertype={theo=defin}}}
\AddToHook{env/eg/begin}{\zcsetup{countertype={theo=eg}}}
\AddToHook{env/lem/begin}{\zcsetup{countertype={theo=lem}}}
\AddToHook{env/prop/begin}{\zcsetup{countertype={theo=prop}}}
\AddToHook{env/rem/begin}{\zcsetup{countertype={theo=rem}}}

\newcommand{\ceqref}[1]{
    \mathrel{\mathop{=}\limits^{\text{\cref{#1}}}}
}
\newcommand{\ceqsref}[2]{
    \mathrel{\mathop{=}\limits^{\text{\cref{#1}}}_{\text{\cref{#2}}}}
}

\makeatletter

\newlength{\eqalignref@lab}   
\newlength{\eqalignref@tmp}

\newcommand{\eqalignref@measure}[1]{%
  \settowidth{\eqalignref@tmp}{\ensuremath{\scriptstyle\text{\cref{#1}}}}%
  \ifdim\eqalignref@tmp>\eqalignref@lab \global\eqalignref@lab=\eqalignref@tmp \fi
}

\newcommand{\eqalignref@finish}{%
  \settowidth{\eqalignref@tmp}{\ensuremath{=}}%
  \global\advance\eqalignref@lab -\eqalignref@tmp
  \ifdim\eqalignref@lab<\z@ \global\eqalignref@lab=\z@ \fi
  \global\eqalignref@lab=.5\eqalignref@lab
}

\newcommand{\eqalignref@pad}[1]{\mspace{#1}\hspace{\eqalignref@lab}}

\newcommand{\eqalignref}[2][3mu]{%
  \global\eqalignref@lab=\z@ \eqalignref@measure{#2}\eqalignref@finish
  \eqalignref@pad{#1}
  &\mathrel{\mathop{=}\limits^{\mathclap{\text{\cref{#2}}}}}%
  \eqalignref@pad{#1}
}

\newcommand{\eqalignsref}[3][3mu]{%
  \global\eqalignref@lab=\z@ \eqalignref@measure{#2}\eqalignref@measure{#3}\eqalignref@finish
  \eqalignref@pad{#1}%
  &\mathrel{\mathop{=}\limits^{\mathclap{\text{\cref{#2}}}}
                              _{\mathclap{\text{\cref{#3}}}}}%
  \eqalignref@pad{#1}%
}
\makeatother

\newcommand{\cref}[1]{\zcref{#1}}
\newcommand{\Cref}[1]{\zcref[S]{#1}}

\makeatletter
\def\namedlabel#1#2{\begingroup
    (#2)%
    \def\@currentlabel{(#2)}%
    \phantomsection\label{#1}\endgroup
}
\makeatother

\newcommand\kk{\Bbbk}
\newcommand\one{\mathds{1}}

\newcommand\bA{\mathbf{A}}

\newcommand\cC{\mathcal{C}}
\newcommand\cD{\mathcal{D}}

\newcommand\cM{\mathcal{M}}
\newcommand\cN{\mathcal{N}}

\newcommand\rev{\textup{rev}}

\newcommand\AOScat{\mathpzc{AOS}}       
\newcommand\AKcat{\mathpzc{AK}}         
\newcommand\cEnd{\mathpzc{End}}         
\newcommand\Dcat{{\color{cyan} \mathcal{D}}}
\newcommand\DScat{\mathpzc{DS}}         
\newcommand\Mcat{{\color{cyan} \mathcal{M}}}    
\newcommand\OScat{\mathpzc{OS}}         
\newcommand\Kcat{\mathpzc{K}}           

\newcommand{\upobj}[1][black]{{\color{#1} \mathord{\uparrow}}}
\newcommand{\downobj}[1][black]{{\color{#1} \mathord{\downarrow}}}
\newcommand{\Kobj}{\mathsf{K}}

\newcommand\bF{\mathbf{F}}
\newcommand\bG{\mathbf{G}}
\newcommand\bH{\mathbf{H}}

\DeclareMathOperator{\Aff}{Aff}

\DeclareMathOperator{\id}{id}
\DeclareMathOperator{\Mor}{Mor}                 

\DeclareMathOperator{\Ob}{Ob}                   

\usepackage{tikz}
\usetikzlibrary{arrows.meta,patterns}
\usetikzlibrary{decorations.markings,decorations.pathreplacing}
\usepackage{tikz-cd}

\tikzset{multi/.style={very thick}}
\tikzset{MC/.style={cyan}}
\tikzset{present/.style={cyan}}
\tikzset{
    centerzero/.style={
        >=To,
        baseline={([yshift=-0.5ex]#1)},
        },
    centerzero/.default={0,0},
    anchorbase/.style={centerzero=current bounding box.center},
}
\tikzset{   
    wipe/.style={
        white,
        line cap = butt,    
        line width=4pt
    }
} 
\tikzset{
    over/.style={
        preaction={
            draw=white,
            line cap = butt,    
            line width=4pt,
            -{},                
        }
    },
}
\tikzset{->-/.style={decoration={
    markings,
    mark=at position #1 with {\arrow{>}}},postaction={decorate}}
}
\tikzset{-<-/.style={decoration={
    markings,
    mark=at position #1 with {\arrow{<}}},postaction={decorate}}
}

\newcommand{\strandlabel}[1]{$\scriptstyle{#1}$}
\newcommand{\botlabel}[1]{node[anchor=north,color=black] {\strandlabel{#1}}}
\newcommand{\botlabelshift}[2]{node[anchor=north,color=black,xshift=#1] {\strandlabel{#2}}}
\newcommand{\toplabel}[1]{node[anchor=south,color=black,xshift=0pt] {\strandlabel{#1}}}
\newcommand{\toplabelshift}[2]{node[anchor=south,color=black,xshift=#1] {\strandlabel{#2}}}
\newcommand{\braidup}{to[out=up,in=down]}
\newcommand{\braiddown}{to[out=down,in=up]}
\newcommand{\coupon}[2]{ 
    \draw (#1) node[minimum height=9pt,minimum width=4pt,inner sep=2pt,draw=black,rectangle,rounded corners,fill=black!20!white] (coup) {\color{black} $\scriptstyle{#2}$}
}
\newcommand{\genbox}[3]{
    \filldraw[fill=black!20!white,rounded corners] (#1) rectangle (#2) node[midway] {\strandlabel{#3}}
}

\newcommand{\posdot}[1]{
    \filldraw[fill=white] (#1)+(0.09,0) arc(0:360:0.09);
    \draw (#1)++(-0.09,0) to ++(0.18,0);
    \draw (#1)++(0,-0.09) -- ++(0,0.18)
}
\newcommand{\negdot}[1]{
    \filldraw[fill=white] (#1)+(0.09,0) arc(0:360:0.09);
    \draw (#1)++(-0.09,0) to ++(0.18,0)
}
\newcommand{\opendot}[1]{
    \node[white] at (#1) {$\scriptstyle{\bullet}$};
    \node at (#1) {$\scriptstyle{\circ}$}
}
\newcommand{\closeddot}[1]{
    \node at (#1) {$\scriptstyle{\bullet}$}
}
\newcommand{\bubright}[1]{
    \draw[->] (#1)+(0.2,0) arc(360:0:0.2)
}

\newcommand{\bubun}[1]{
    \draw (#1)+(0.2,0) arc(0:360:0.2)
}

\newcommand{\poscross}{
    \begin{tikzpicture}[centerzero]
        \draw (0.2,-0.2) -- (-0.2,0.2);
        \draw[over] (-0.2,-0.2) -- (0.2,0.2);
    \end{tikzpicture}
}
\newcommand{\negcross}{
    \begin{tikzpicture}[centerzero]
        \draw (-0.2,-0.2) -- (0.2,0.2);
        \draw[over] (0.2,-0.2) -- (-0.2,0.2);
    \end{tikzpicture}
}
\newcommand{\posupcross}{
    \begin{tikzpicture}[centerzero]
        \draw[->] (0.2,-0.2) -- (-0.2,0.2);
        \draw[over,->] (-0.2,-0.2) -- (0.2,0.2);
    \end{tikzpicture}
}
\newcommand{\negupcross}{
    \begin{tikzpicture}[centerzero]
        \draw[->] (-0.2,-0.2) -- (0.2,0.2);
        \draw[over,->] (0.2,-0.2) -- (-0.2,0.2);
    \end{tikzpicture}
}
\newcommand{\posrightcross}{
    \begin{tikzpicture}[centerzero]
        \draw[->] (-0.2,-0.2) -- (0.2,0.2);
        \draw[over,<-] (0.2,-0.2) -- (-0.2,0.2);
    \end{tikzpicture}
}
\newcommand{\negrightcross}{
    \begin{tikzpicture}[centerzero]
        \draw[<-] (0.2,-0.2) -- (-0.2,0.2);
        \draw[over,->] (-0.2,-0.2) -- (0.2,0.2);
    \end{tikzpicture}
}
\newcommand{\posdowncross}{
    \begin{tikzpicture}[centerzero]
        \draw[<-] (0.2,-0.2) -- (-0.2,0.2);
        \draw[over,<-] (-0.2,-0.2) -- (0.2,0.2);
    \end{tikzpicture}
}
\newcommand{\negdowncross}{
    \begin{tikzpicture}[centerzero]
        \draw[<-] (-0.2,-0.2) -- (0.2,0.2);
        \draw[over,<-] (0.2,-0.2) -- (-0.2,0.2);
    \end{tikzpicture}
}
\newcommand{\negleftcross}{
    \begin{tikzpicture}[centerzero]
        \draw[->] (0.2,-0.2) -- (-0.2,0.2);
        \draw[over,<-] (-0.2,-0.2) -- (0.2,0.2);
    \end{tikzpicture}
}
\newcommand{\posleftcross}{
    \begin{tikzpicture}[centerzero]
        \draw[<-] (-0.2,-0.2) -- (0.2,0.2);
        \draw[over,->] (0.2,-0.2) -- (-0.2,0.2);
    \end{tikzpicture}
}

\newcommand{\upstrand}{
    \begin{tikzpicture}[centerzero]
        \draw[->] (0,-0.25) -- (0,0.25);
    \end{tikzpicture}
}
\newcommand{\downstrand}[1][]{
    \begin{tikzpicture}[centerzero,#1]
        \draw[<-] (0,-0.25) -- (0,0.25);
    \end{tikzpicture}
}

\newcommand{\upopendot}[1][]{
    \begin{tikzpicture}[centerzero,#1]
        \draw[->] (0,-0.25) -- (0,0.25);
        \opendot{0,0};
    \end{tikzpicture}
}
\newcommand{\upcloseddot}[1][]{
    \begin{tikzpicture}[centerzero,#1]
        \draw[->] (0,-0.25) -- (0,0.25);
        \closeddot{0,0};
    \end{tikzpicture}
}
\newcommand{\upposdot}[1][]{
    \begin{tikzpicture}[centerzero,#1]
        \draw[->] (0,-0.25) -- (0,0.25);
        \posdot{0,0};
    \end{tikzpicture}
}
\newcommand{\upnegdot}[1][]{
    \begin{tikzpicture}[centerzero,#1]
        \draw[->] (0,-0.25) -- (0,0.25);
        \negdot{0,0};
    \end{tikzpicture}
}

\newcommand{\downposdot}[1][]{
    \begin{tikzpicture}[centerzero,#1]
        \draw[<-] (0,-0.25) -- (0,0.25);
        \posdot{0,0};
    \end{tikzpicture}
}
\newcommand{\downnegdot}[1][]{
    \begin{tikzpicture}[centerzero,#1]
        \draw[<-] (0,-0.25) -- (0,0.25);
        \negdot{0,0};
    \end{tikzpicture}
}
\newcommand{\Kopendot}{
    \begin{tikzpicture}[centerzero]
        \draw (0,-0.25) -- (0,0.25);
        \opendot{0,0};
    \end{tikzpicture}
}
\newcommand{\Kcloseddot}{
    \begin{tikzpicture}[centerzero]
        \draw (0,-0.25) -- (0,0.25);
        \closeddot{0,0};
    \end{tikzpicture}
}
\newcommand{\togupdown}[1][]{
    \begin{tikzpicture}[centerzero,#1]
        \draw[<->] (0,-0.2) -- (0,0.2);
        \opendot{0,0};
    \end{tikzpicture}
}
\newcommand{\togdownup}[1][]{
    \begin{tikzpicture}[centerzero,#1]
        \draw[->] (0,0.2) -- (0,0);
        \draw[->] (0,-.2) -- (0,0);
        \opendot{0,0};
    \end{tikzpicture}
}

\newcommand{\rightcup}[1][]{
    \begin{tikzpicture}[anchorbase,#1]
        \draw[->] (-0.15,0.15) -- (-0.15,0) arc(180:360:0.15) -- (0.15,0.15);
    \end{tikzpicture}
}
\newcommand{\leftcup}[1][]{
    \begin{tikzpicture}[anchorbase,#1]
        \draw[<-] (-0.15,0.15) -- (-0.15,0) arc(180:360:0.15) -- (0.15,0.15);
    \end{tikzpicture}
}
\newcommand{\rightcap}[1][]{
    \begin{tikzpicture}[anchorbase,#1]
        \draw[->] (-0.15,-0.15) -- (-0.15,0) arc(180:0:0.15) -- (0.15,-0.15);
    \end{tikzpicture}
}
\newcommand{\leftcap}[1][]{
    \begin{tikzpicture}[anchorbase,#1]
        \draw[<-] (-0.15,-0.15) -- (-0.15,0) arc(180:0:0.15) -- (0.15,-0.15);
    \end{tikzpicture}
}
\newcommand{\Kcup}{
    \begin{tikzpicture}[anchorbase]
        \draw (-0.15,0.15) -- (-0.15,0) arc(180:360:0.15) -- (0.15,0.15);
    \end{tikzpicture}
}
\newcommand{\Kcap}{
    \begin{tikzpicture}[anchorbase]
        \draw (-0.15,-0.15) -- (-0.15,0) arc(180:0:0.15) -- (0.15,-0.15);
    \end{tikzpicture}
}

\newcommand{\rightbub}{
    \begin{tikzpicture}[centerzero]
        \bubright{0,0};
    \end{tikzpicture}
}

\newcommand{\unbub}{
    \begin{tikzpicture}[centerzero]
        \bubun{0,0};
    \end{tikzpicture}
}

\newcommand{\Kbub}[1]{
    \begin{tikzpicture}[centerzero]
        \bubun{0,0};
    \end{tikzpicture}
}

\newtheorem{theo}{Theorem}[section]

\newtheorem{prop}[theo]{Proposition}

\newtheorem{cor}[theo]{Corollary}

\theoremstyle{definition}

\newtheorem{defin}[theo]{Definition}
\newtheorem{rem}[theo]{Remark}
\newtheorem{eg}[theo]{Example}

\numberwithin{equation}{section}
\allowdisplaybreaks

\setenumerate[1]{label=(\alph*)}          

\newtoggle{comments}
\newtoggle{details}
\newtoggle{detailsnote}

\iftoggle{comments}{%
    \usepackage[notref,notcite]{showkeys}   
    \newcommand{\acomments}[1]{
        \ \\
        {\color{red}
            \textbf{AS:} #1
        }
        \ \\
    }
    \newcommand{\hcomments}[1]{
        \ \\
        {\color{cyan}
            \textbf{HS:} #1
        }
        \ \\
    }

    \newcommand{\ycomments}[1]{
        \ \\
        {\color{blue}
            \textbf{YS:} #1
        }
        \ \\
    }
    \newcommand{\question}[1]{
        \ \\
        {\color{blue}
            \textbf{Question:} #1
        }
        \ \\
    }
    }{%
        \newcommand{\acomments}[1]{}
        \newcommand{\hcomments}[1]{}
        \newcommand{\ycomments}[1]{}
        \newcommand{\question}[1]{}
    }

\iftoggle{details}{%
    \newcommand{\details}[1]{
        \ \\
        {\color{OliveGreen}
            \textbf{Details:} #1
        }
        \\
    }
}{%
    \newcommand{\details}[1]{\ignorespaces}
}

\begin{document}

\title{Affinizations and braided module categories}

\author{Hadi Salmasian}
\address[H.S.]{
    Department of Mathematics and Statistics \\
    University of Ottawa \\
    Ottawa, ON, K1N 6N5, Canada
}
\urladdr{\href{https://mysite.science.uottawa.ca/hsalmasi/}{mysite.science.uottawa.ca/hsalmasi/}, \textrm{\textit{ORCiD}:} \href{https://orcid.org/0000-0002-1073-7183}{orcid.org/0000-0002-1073-7183}}
\email{hadi.salmasian@uottawa.ca}

\author{Alistair Savage}
\address[A.S.]{
    Department of Mathematics and Statistics \\
    University of Ottawa \\
    Ottawa, ON, K1N 6N5, Canada
}
\urladdr{\href{https://alistairsavage.ca}{alistairsavage.ca}, \textrm{\textit{ORCiD}:} \href{https://orcid.org/0000-0002-2859-0239}{orcid.org/0000-0002-2859-0239}}
\email{alistair.savage@uottawa.ca}

\author{Yaolong Shen}
\address[Y.S.]{
   School of Mathematical Sciences, Key Laboratory of MEA (Ministry of Education)
   \& Shanghai Key Laboratory of PMMP, East China Normal University, Shanghai 200241,
   China
}
\urladdr{\href{https://yaolong-shen-mathematics.yaolongshen3555.chatgpt.site/}{yaolong-shen-mathematics.yaolongshen3555.chatgpt.site/},\newline \indent \textrm{\textit{ORCiD}:} \href{https://orcid.org/0000-0002-8840-3394}{orcid.org/0000-0002-8840-3394}}
\email{ylshen@math.ecnu.edu.cn}

\begin{abstract}
    We relate affinizations of braided monoidal categories to braided module categories, allowing a twist by a braided monoidal automorphism.  Our main theorem establishes a bijection between twisted braided structures on a right module category and extensions of its action to the corresponding twisted affinization.  We also show that the twisted affinization is the free twisted braided module category on one object.  We give explicit presentations of twisted affinizations, both as monoidal categories and as module categories, using invertible dots and reflection relations.  We also describe the relation to cylinder twists and tensor-pair cylinder twists, including the naturality condition needed to obtain a braided module structure.  As applications, we obtain module-category presentations of the affine Kauffman and affine oriented skein categories and an orientation-reversing twisted version of the latter.  We realize the disoriented skein category as a quotient of this twisted affinization by a single additional relation.
\end{abstract}

\subjclass[2020]{Primary 18M15; Secondary 17B37, 18M30}

\keywords{Affinization, braided monoidal category, braided module category, cylinder twist, reflection equation, skein category, quantum symmetric pair}

\ifboolexpr{togl{comments} or togl{details}}{%
  {\color{magenta}DETAILS OR COMMENTS ON}
}{%
}

\hypersetup{
    pdftitle={Affinizations and braided module categories},
    pdfauthor={Hadi Salmasian; Alistair Savage; Yaolong Shen},
    pdfsubject={Twisted affinizations of braided monoidal categories, their universal properties as braided module categories, and applications to skein categories},
    pdfkeywords={Affinization; braided monoidal category; braided module category; cylinder twist; reflection equation; skein category; quantum symmetric pair; 2020 MSC: Primary 18M15, Secondary 17B37, 18M30},
    pdfcreator={LaTeX with hyperref},
}

\maketitle
\thispagestyle{empty}


\section{Introduction}

Braided module categories occur both in the topology of tangles and in the representation theory of quantum symmetric pairs.  A module category carries an action of a monoidal category, and a module braiding describes how a strand from the acting category winds around a module object, compatibly with crossings and tensor products.  Tangles in a solid torus give a geometric example of this structure \cite{Bro13}.  On the representation-theoretic side, representations of the coideal subalgebra in a quantum symmetric pair can be tensored on the right with representations of the ambient quantum group.  Universal $K$-matrices provide compatible module braidings, with a possible twist by an automorphism of the acting category \cite{BK19,Kol20,DNTY19}.  

The affine oriented skein category and the affine Kauffman skein category extend their non-affine counterparts by allowing strands to wind around a cylinder.  In presentations by generators and relations, this winding is represented by invertible dots on strands.  The interpretation of affine generators as winding operations also relates affine Hecke algebras to the topology of braids on a cylinder.  In \cite{MS21}, these examples were formalized through a general procedure for \emph{affinizing} a monoidal category: one adjoins invertible morphisms representing winding, with relations specifying their compatibility with the original morphisms and tensor products.  The resulting category is called the \emph{affinization}.  When the original category is braided, its affinization is again monoidal and admits an equivalent presentation by dot generators.  This construction gives a uniform treatment of affine braid, tangle, and skein categories, recovering in particular the affine oriented and Kauffman skein categories from the corresponding skein categories.

In the present paper, we relate the above two concepts by describing twisted braided module structures in terms of actions of twisted affinizations.  We also give explicit presentations of these affinizations by generators and relations, both as monoidal categories and as module categories.  Presentations as module categories turn out to be particularly relevant in the diagrammatic study of quantum symmetric pairs \cite{SSSsub26,SSS26,SSS-AIII,SSS-BCD}.

Let $\cC$ be a braided strict monoidal category with unit object $\one$, and let $\sigma\colon\cC\to\cC$ be a strict braided monoidal automorphism.  We use $\otimes$ for both the tensor product in $\cC$ and the action on a right $\cC$-module category.  A \emph{$\sigma$-braided} structure on a strict right $\cC$-module category $\cM$ is a family of isomorphisms, natural in objects $M$ of $\cM$ and $X$ of $\cC$,
\[
    \eta_{M,X}\colon M\otimes X\xrightarrow{\sim}M\otimes\sigma(X),
\]
satisfying the octagon and multiplicativity axioms of \cref{braidedmodcat}.  These axioms specify the behavior of the family under the module action and under tensor products in $\cC$, respectively.  The case $\sigma=\id_\cC$ recovers the untwisted notion of a braided module category.

We describe such structures using the \emph{$\sigma$-twisted affinization} $\Aff_\sigma(\cC)$ of \cref{affdef}.  This construction extends the affinization of \cite{MS21}, which describes diagrams on a cylinder by adjoining morphisms representing winding.  The automorphism $\sigma$ allows winding to change the labels of strands.  The twisted affinization has the same objects as $\cC$, and the braiding on $\cC$ gives the twisted affinization a strict monoidal structure for which the canonical functor $\imath_\cC\colon\cC\to\Aff_\sigma(\cC)$ is strict monoidal; see \cref{afftensor}.

Our main structural result, \cref{affactionextension}, states that \emph{$\sigma$-braided structures on a strict right $\cC$-module category are in bijection with strict right $\Aff_\sigma(\cC)$-actions extending the given $\cC$-action}.  The correspondence is explicit, allowing one to easily move between the two points of view. The twisted affinization itself has a canonical $\sigma$-braided right $\cC$-module structure, and it is \emph{free on one object} in this setting; see \cref{affbraiding,afffree}.   This universal property extends the description of tangles in the solid torus in \cite[Thms.~5.2--5.3]{Bro13}.  In the untwisted case, the action-extension theorem is also an algebraic counterpart of the description using the annulus category in \cite[Thm.~3.11]{BBJ18}.  Our results allow a nontrivial automorphism $\sigma$ and require neither rigidity nor a balancing on $\cC$.

Explicit presentations of the twisted affinization \emph{as module categories} (as opposed to as monoidal categories) are central to the above results.  In \cref{jungle,jungleneg}, we present $\Aff_\sigma(\cC)$ as a right $\cC$-module category by invertible dot morphisms, a relation determining the dot on a tensor product, the twisted reflection equation, and naturality relations.  When the objects of $\cC$ are freely generated under tensor product by a chosen set, these presentations reduce to dots on the generating objects and relations involving generating morphisms; see \cref{swamp+,swamp-}.  The module-category presentations make it possible to construct functors into a right $\cC$-module category by checking explicit relations among the images of the dots.  They also provide a setting for quotients by relations preserved by the right $\cC$-action.  In the linear setting, cyclotomic quotients impose polynomial relations on dot endomorphisms, and the module-category viewpoint records the compatibility of such relations with the acting category.

While our main interest is in the connection between affinizations and braided module categories, some of the literature uses the related concept of a \emph{cylinder twist}.  In \cref{S:braid} we give the precise connection between braided module categories and two types of cylinder twists.

Our main results yield explicit presentations of several skein categories as module categories.  The affine Kauffman skein category and the affine oriented skein category admit the module-category presentations in \cref{AKpresent,AOSpresent}.  In the oriented case, a single invertible dot and a reflection relation suffice.  Taking $\sigma$ to reverse all strand orientations gives a twisted affine oriented skein category generated, as a module category, by a single invertible morphism between the two strand orientations, subject to a twisted reflection relation; see \cref{tAOSmodule}.  Finally, \cref{DStheorem} realizes the disoriented skein category of \cite[Def.~2.1]{SSS26} as a quotient of this twisted affinization by one additional relation.  This also shows that one of the relations in its original presentation is redundant.  The disoriented skein category provides a diagrammatic description of representations of quantum symmetric pairs associated with orthosymplectic Lie superalgebras \cite{SSS26}, so the quotient description connects the general affinization construction to these representation categories.

The paper is organized as follows.  We define twisted affinization and give its monoidal presentations in \cref{S:affinize}.  We develop presentations as module categories in \cref{S:affmodule}.  We compare braided module structures, cylinder twists, and tensor-pair cylinder twists in \cref{S:braid}, then prove the action-extension theorem and universal property in \cref{S:affbraided}.  The skein applications occupy \cref{S:applications}.

\subsection*{Connection to the literature}

The reflection equation appears in algebra presentations obtained by adjoining a single affine generator.  For example, in the normalization $(T_i-q)(T_i+q^{-1})=0$, the extended affine Hecke algebra of type $A_{n-1}$, $n \ge 2$, is obtained from the finite Hecke algebra by adjoining an invertible $X_1$ commuting with $T_j$ for $j>1$ and satisfying
\[
    X_1T_1X_1T_1=T_1X_1T_1X_1.
\]
If we recursively define $X_{i+1}:=T_iX_iT_i$, $1 \le i < n$, then the $X_i$ commute; the displayed reflection equation is precisely $X_1X_2=X_2X_1$.  Cyclotomic quotients impose a polynomial relation on $X_1$; see \cite[\S2]{OP12}.  Affine and cyclotomic BMW algebras have analogous presentations, with additional relations involving cups and caps \cite[Def.~2.3 and \S3]{GHM09}.  

The connection with these algebraic examples is most explicit in the module-category presentations of \cref{jungle,jungleneg}.  Here the reflection equation appears alongside naturality relations and a relation determining the dot on a tensor product from the dots on its factors.

Isaev--Ogievetsky connect these presentations to $K$-matrices by realizing affine Hecke generators using reflection equation algebras and constructing spectral-parameter solutions from affine generators \cite[\S1]{IO07}.  Jordan--Ma use quantum symmetric pairs and reflection equations to construct representations of affine and double affine Hecke algebras of type $C^\vee C_n$ \cite{JM11}.  Buciumas--Ko explicitly relate the first type-$B$ Hecke generator to a $K$-matrix, the remaining Jucys--Murphy elements to its braid translates, and these constructions to cylinder-braided action pairs and braided module categories \cite[\S2.1, Thm.~4.7, Rem.~4.9]{BuK23}.

There are substantial categorical precedents as well.  Cylinder braidings were studied by tom Dieck--H\"aring-Oldenburg \cite{tDH98}; Balagovi\'c--Kolb formulate universal $K$-matrices using tensor-pair cylinder twists, including twisted versions \cite[\S4]{BK19}, and Kolb constructs braided module categories from quantum symmetric pairs, using an equivariantization when required by the twist \cite{Kol20}.  Freeness results for tangles in the solid torus appear in \cite[Thms.~5.2--5.3]{Bro13}, and a related universal property for cylindrical braid groupoids in the setting of boundary bimodule categories appears in \cite[Prop.~C.6.2]{AV25}.

\subsection*{Acknowledgments}

The research of A.S.\ and H.S.\ was supported by Discovery Grants RGPIN-2023-03842 and RGPIN-2024-04030 from the Natural Sciences and Engineering Research Council of Canada.  Y.S.\ was supported by these grants, the Fields Institute for Research in Mathematical Sciences, and the Science and Technology Commission of Shanghai Municipality (No. 22DZ2229014). 

\subsection*{Use of AI tools}

The authors used OpenAI's GPT-6 Astra and Anthropic's Claude Fable 5.1 to conduct literature searches and to proofread the manuscript, including checking the validity of proofs.  In some cases, AI caught small logical gaps in proofs and suggested fixes.  Some of these fixes were adopted after being edited by the authors.  For other gaps, the authors developed their own fix.  The authors take full responsibility for the content of this paper.

\section{Twisted affinization of braided monoidal categories\label{S:affinize}}

In this section, we generalize the affinization construction of \cite{MS21} by allowing a monoidal automorphism to act on strands winding around the cylinder.  When this automorphism is the identity, our definitions and results recover those of that paper.  The generalization is fairly straightforward but the added flexibility will allow us to treat some more cases of interest; see \cref{SS:tAOScat}.

\subsection{Definition}

We assume all categories are essentially small.  For a category $\cC$, we let $\Ob(\cC)$ denote its class of objects, and $\Mor(\cC)$ its class of morphisms.  For $X,Y \in \Ob(\cC)$, we denote the class of morphisms from $X$ to $Y$ by $\cC(X,Y)$.  We let $1_X$ denote the identity morphism of an object $X$.  We use $\kk$ to denote a commutative ground ring.  We let $\one$ denote the unit object of a monoidal category.  Throughout this section, $\cC$ denotes a strict monoidal category and $\sigma \colon \cC \to \cC$ is a strict monoidal automorphism.  When $\cC$ is $\kk$-linear, we also assume that $\sigma$ is $\kk$-linear.  We write $X^\sigma := \sigma(X)$ and $f^\sigma := \sigma(f)$ for objects and morphisms.  In particular, $(X \otimes Y)^\sigma = X^\sigma \otimes Y^\sigma$ and $\one^\sigma=\one$.

\begin{defin}[Twisted affinization of a strict monoidal category] \label{affdef}
    The \emph{$\sigma$-twisted affinization} of $\cC$ is the category $\Aff_\sigma(\cC)$ obtained from $\cC$ by adjoining invertible morphisms $\xi_{X,Y} \colon X \otimes Y^\sigma \to Y \otimes X$ for each pair of objects $X,Y \in \Ob(\cC)$, subject to the relations
    \begin{gather} \label{coilrel1}
        \xi_{X,Y \otimes Z} = \xi_{Z \otimes X,Y} \circ \xi_{X \otimes Y^\sigma,Z},
        \\ \label{coilrel2}
        \xi_{X_2,Y_2} \circ (g \otimes f^\sigma) = (f \otimes g) \circ \xi_{X_1,Y_1},
    \end{gather}
    for all $X,X_1,X_2,Y,Y_1,Y_2,Z \in \Ob(\cC)$, $f \in \cC(Y_1,Y_2)$, and $g \in \cC(X_1,X_2)$.  We call the $\xi_{X,Y}$ \emph{coils}.  We define
    \[
        \xi_X := \xi_{\one,X} \colon X^\sigma \to X.
    \]
    When $\cC$ is $\kk$-linear, we take this presentation in $\kk$-linear categories, so that $\Aff_\sigma(\cC)$ is $\kk$-linear.  We write $\Aff(\cC) := \Aff_{\id_\cC}(\cC)$ for the untwisted affinization.
\end{defin}

It follows from \cref{coilrel1} that $\xi_{X,\one} = \xi_{X,\one \otimes \one} = \xi_{X,\one} \circ \xi_{X,\one}$ and so, since $\xi_{X,\one}$ is invertible, we have
\begin{equation} \label{cable}
    \xi_{X,\one} = 1_X
    \quad \text{for all } X \in \Ob(\cC).
\end{equation}
It follows from \cref{coilrel2} that
\begin{equation}
    \xi_Y \circ f^\sigma = f \circ \xi_X, \quad
    f \in \cC(X,Y).
\end{equation}
In other words, $(\xi_X)_{X \in \Ob(\cC)}$ is a natural isomorphism $\imath_\cC \circ \sigma \Rightarrow \imath_\cC$, where
\begin{equation}\label{imath}
    \imath_\cC \colon \cC \to \Aff_\sigma(\cC)
\end{equation}
is the canonical functor, which is the identity on objects.

Recall that a strict monoidal category $\cC$ is \emph{braided} if it is equipped with a natural family of isomorphisms
\[
    \beta_{X,Y}
    =
    \begin{tikzpicture}[centerzero]
        \draw (0.3,-0.3) \botlabel{Y} -- (-0.3,0.3);
        \draw[over] (-0.3,-0.3) \botlabel{X} -- (0.3,0.3);
    \end{tikzpicture}
    \colon X \otimes Y \to Y \otimes X
    \quad \text{with inverses} \quad
    \beta_{X,Y}^{-1}
    =
    \begin{tikzpicture}[centerzero]
        \draw (-0.3,-0.3) \botlabel{Y} -- (0.3,0.3);
        \draw[over] (0.3,-0.3) \botlabel{X} -- (-0.3,0.3);
    \end{tikzpicture}
\]
for $X,Y \in \Ob(\cC)$, satisfying
\begin{equation} \label{hexagon}
    \begin{tikzpicture}[centerzero]
        \draw (0.5,-0.5) \botlabel{Y \otimes Z} -- (-0.5,0.5);
        \draw[over] (-0.5,-0.5) \botlabel{X} -- (0.5,0.5);
    \end{tikzpicture}
    =
    \begin{tikzpicture}[centerzero]
        \draw (0,-0.5) \botlabel{Y} -- (-0.5,0.5);
        \draw (0.5,-0.5) \botlabel{Z} -- (0,0.5);
        \draw[over] (-0.5,-0.5) \botlabel{X} -- (0.5,0.5);
    \end{tikzpicture}
    \ ,\quad
    \begin{tikzpicture}[centerzero]
        \draw (0.5,-0.5) \botlabel{Z} -- (-0.5,0.5);
        \draw[over] (-0.5,-0.5) \botlabel{X \otimes Y} -- (0.5,0.5);
    \end{tikzpicture}
    =
    \begin{tikzpicture}[centerzero]
        \draw (0.5,-0.5) \botlabel{Z} -- (-0.5,0.5);
        \draw[over] (0,-0.5) \botlabel{Y} -- (0.5,0.5);
        \draw[over] (-0.5,-0.5) \botlabel{X} -- (0,0.5);
    \end{tikzpicture}
    \ .
\end{equation}

In what follows, we will use unlabeled strands to indicate that a relation holds for any labeling of the strands.  In particular,
\[
    \begin{tikzpicture}[centerzero]
        \draw (0,-0.4) -- (0,0.4);
        \coupon{0,0}{f};
    \end{tikzpicture}
    \quad \text{represents an arbitrary morphism} \quad
    \begin{tikzpicture}[centerzero]
        \draw (0,-0.4) \botlabel{X} -- (0,0.4) \toplabel{Y};
        \coupon{0,0}{f};
    \end{tikzpicture}
    \quad \text{in } \cC,
\]
and we have
\begin{equation} \label{Reid2}
    \begin{tikzpicture}[centerzero]
        \draw (0.25,-0.5) to[out=135,in=down] (-0.2,0) to[out=up,in=225] (0.25,0.5);
        \draw[over] (-0.25,-0.5) to[out=45,in=down] (0.2,0) to[out=up,in=-45] (-0.25,0.5);
    \end{tikzpicture}
    \ =\
    \begin{tikzpicture}[centerzero]
        \draw (-0.2,-0.5) -- (-0.2,0.5);
        \draw (0.2,-0.5) -- (0.2,0.5);
    \end{tikzpicture}
    \ =\
    \begin{tikzpicture}[centerzero]
        \draw (-0.25,-0.5) to[out=45,in=down] (0.2,0) to[out=up,in=-45] (-0.25,0.5);
        \draw[over] (0.25,-0.5) to[out=135,in=down] (-0.2,0) to[out=up,in=225] (0.25,0.5);
    \end{tikzpicture}
    \ .
\end{equation}
The naturality of the braiding means that
\begin{gather} \label{Reid3}
    \begin{tikzpicture}[centerzero]
        \draw (0.5,-0.5) -- (-0.5,0.5);
        \draw[over] (0,-0.5) to[out=135,in=down] (-0.4,0) to[out=up,in=225] (0,0.5);
        \draw[over] (-0.5,-0.5) -- (0.5,0.5);
    \end{tikzpicture}
    \ =\
    \begin{tikzpicture}[centerzero]
        \draw (0.5,-0.5) -- (-0.5,0.5);
        \draw[over] (0,-0.5) to[out=45,in=down] (0.4,0) to[out=up,in=-45] (0,0.5);
        \draw[over] (-0.5,-0.5) -- (0.5,0.5);
    \end{tikzpicture}
    \ ,
    \\ \label{lynx}
    \begin{tikzpicture}[centerzero]
        \draw (-0.5,-0.5) -- (0.5,0.5);
        \draw[over] (0.5,-0.5) -- (-0.5,0.5);
        \coupon{-0.28,-0.28}{f};
    \end{tikzpicture}
    =
    \begin{tikzpicture}[centerzero]
        \draw (-0.5,-0.5) -- (0.5,0.5);
        \draw[over] (0.5,-0.5) -- (-0.5,0.5);
        \coupon{0.28,0.28}{f};
    \end{tikzpicture}
    \ ,\quad
    \begin{tikzpicture}[centerzero]
        \draw (0.5,-0.5) -- (-0.5,0.5);
        \draw[over] (-0.5,-0.5) -- (0.5,0.5);
        \coupon{-0.25,-0.25}{f};
    \end{tikzpicture}
    =
    \begin{tikzpicture}[centerzero]
        \draw (0.5,-0.5) -- (-0.5,0.5);
        \draw[over] (-0.5,-0.5) -- (0.5,0.5);
        \coupon{0.25,0.25}{f};
    \end{tikzpicture}
    \ ,\quad
    \begin{tikzpicture}[centerzero]
        \draw (-0.5,-0.5) -- (0.5,0.5);
        \draw[over] (0.5,-0.5) -- (-0.5,0.5);
        \coupon{0.25,-0.25}{f};
    \end{tikzpicture}
    =
    \begin{tikzpicture}[centerzero]
        \draw (-0.5,-0.5) -- (0.5,0.5);
        \draw[over] (0.5,-0.5) -- (-0.5,0.5);
        \coupon{-0.25,0.25}{f};
    \end{tikzpicture}
    \ ,\quad
    \begin{tikzpicture}[centerzero]
        \draw (0.5,-0.5) -- (-0.5,0.5);
        \draw[over] (-0.5,-0.5) -- (0.5,0.5);
        \coupon{0.28,-0.28}{f};
    \end{tikzpicture}
    =
    \begin{tikzpicture}[centerzero]
        \draw (0.5,-0.5) -- (-0.5,0.5);
        \draw[over] (-0.5,-0.5) -- (0.5,0.5);
        \coupon{-0.28,0.28}{f};
    \end{tikzpicture}
    \ .
\end{gather}
In other words, the braid relations are satisfied, and coupons slide through crossings.

\subsection{Monoidal structure}

For the remainder of this section, assume that $\cC$ is braided and that $\sigma$ is a braided monoidal automorphism, so that
\[
    \sigma(\beta_{X,Y}) = \beta_{X^\sigma,Y^\sigma},\qquad X,Y \in \Ob(\cC).
\]

\begin{prop} \label{afftensor}
    There is a unique way to extend the tensor product of $\cC$ to $\Aff_\sigma(\cC)$ such that $\Aff_\sigma(\cC)$ is a strict monoidal category and
    \begin{equation} \label{stack}
        \xi_{X,Y} \otimes 1_Z = \xi_{X \otimes Z, Y} \circ (1_X \otimes \beta_{Y^\sigma,Z}), \quad
        1_X \otimes \xi_{Y,Z} = (\beta_{X,Z}^{-1} \otimes 1_Y) \circ \xi_{X \otimes Y,Z},
    \end{equation}
    for all $X,Y,Z \in \Ob(\cC)$.
\end{prop}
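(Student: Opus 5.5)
Uniqueness comes first and is immediate. In a strict monoidal category, $f \otimes g = (f \otimes 1) \circ (1 \otimes g)$. Every morphism of $\Aff_\sigma(\cC)$ is a composite of morphisms of $\cC$, coils, and inverses of coils. Hence the tensor product is determined by the functors $-\otimes 1_Z$ and $1_X \otimes -$ on these generators. On morphisms of $\cC$ these functors are forced to agree with the tensor product of $\cC$. On coils they are forced by \cref{stack}, and on inverse coils by inverting \cref{stack}.

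For existence, I will construct, for each object $Z$, a functor $R_Z := -\otimes Z \colon \Aff_\sigma(\cC) \to \Aff_\sigma(\cC)$, and for each object $X$ a functor $L_X := X \otimes -$. I use the universal property of the presentation in \cref{affdef}. On $\cC$ these functors are the usual tensoring. On coils they are given by \cref{stack}, namely $R_Z(\xi_{X,Y}) = \xi_{X\otimes Z,Y}\circ(1_X\otimes\beta_{Y^\sigma,Z})$ and $L_X(\xi_{Y,Z}) = (\beta_{X,Z}^{-1}\otimes 1_Y)\circ\xi_{X\otimes Y,Z}$. These are composites of invertible morphisms, so they are invertible. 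Two checks are needed.
\begin{enumerate}
    \item The images respect \cref{coilrel1}. For $R_W$, expanding both sides and using \cref{coilrel1} with $X$ replaced by $X\otimes W$, the coils can be slid past crossings using \cref{coilrel2}. This reduces the claim to the hexagon \cref{hexagon} for $\beta_{(Y\otimes Z)^\sigma,W}$, where I use $(Y\otimes Z)^\sigma = Y^\sigma\otimes Z^\sigma$.
    \item The images respect \cref{coilrel2}. This follows from naturality of the braiding \cref{lynx} together with $\sigma$ being braided, so that $f^\sigma$ slides through $\beta_{Y^\sigma,Z}$.
\end{enumerate}
The case of $L_X$ is analogous, using the other hexagon and the inverse braiding.

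Next I check the hypotheses of the standard lemma for building a strict monoidal structure from left and right tensoring functors. The conditions are:
\begin{enumerate}
    \item $R_Z R_{Z'} = R_{Z'\otimes Z}$ and $L_X L_{X'} = L_{X\otimes X'}$, with $R_\one = L_\one = \id$;
    \item $L_X R_Z = R_Z L_X$;
    \item the interchange law $(f\otimes 1)\circ(1\otimes g) = (1\otimes g)\circ(f\otimes 1)$ for generators $f,g$.
\end{enumerate}
Each of these only needs checking on generators. Condition (a) for $R$ on coils is again a hexagon identity combined with \cref{coilrel2}. The unit conditions use $\beta_{X,\one} = 1$ and \cref{cable}. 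Condition (b) on coils reduces to naturality of $\beta$ with respect to the coil, after rewriting both sides as $\xi_{X\otimes Y\otimes Z,\,\cdot}$ conjugated by crossings.

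The main obstacle is the interchange law when both $f$ and $g$ are coils, say $f=\xi_{X,Y}$ and $g=\xi_{X',Y'}$. Expanding both sides via \cref{stack} gives two words in coils and crossings. To show they agree, I would first use \cref{coilrel1} to merge each side into a single coil $\xi_{\,\cdot\,,\,Y'\otimes Y}$ or $\xi_{\,\cdot\,,\,Y\otimes Y'}$, with crossings on either side. I would then use \cref{coilrel2} with $f=\beta_{Y,Y'}$, twisted as $\beta_{Y^\sigma,Y'^\sigma}$ since $\sigma$ is braided, to convert between these two forms. What remains is an identity purely in $\cC$, which I expect to follow from the braid relation \cref{Reid3}, \cref{Reid2}, and the hexagons. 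The mixed cases, where one of $f,g$ is in $\cC$, follow from \cref{coilrel2} and \cref{lynx}. Cases involving inverse coils follow from the coil cases by inverting.

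Once these checks are done, the tensor product is well defined and strict monoidal, extends that of $\cC$, and satisfies \cref{stack} by construction.
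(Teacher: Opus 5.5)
Your proposal is correct and takes essentially the same route as the paper, which defers to the untwisted argument of \cite[Prop.~2.5]{MS21}. Uniqueness comes from generation by morphisms of $\cC$, coils and their inverses; existence comes from checking that \cref{stack} respects \cref{coilrel1,coilrel2} and yields associativity, units and interchange, with $\sigma(\beta_{X,Y})=\beta_{X^\sigma,Y^\sigma}$ entering precisely in the coil--coil interchange, as you identified. One small correction: for condition (b), both $L_WR_Z(\xi_{X,Y})$ and $R_ZL_W(\xi_{X,Y})$ expand to the identical expression $(\beta^{-1}_{W,Y}\otimes 1_{X\otimes Z})\circ\xi_{W\otimes X\otimes Z,Y}\circ(1_{W\otimes X}\otimes\beta_{Y^\sigma,Z})$, so no ``naturality of $\beta$ with respect to coils'' is involved, and such naturality fails in $\Aff_\sigma(\cC)$ in general.
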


\begin{proof}
    The proof of \cite[Prop.~2.5]{MS21}, which treats $\sigma=\id_\cC$, applies with the twisted coil relations \cref{coilrel1,coilrel2}.  The formulas \cref{stack} determine the tensor product uniquely, since morphisms of $\cC$, coils, and their inverses generate the category.  Naturality and the identities \cref{hexagon}, together with $\sigma(\beta_{X,Y})=\beta_{X^\sigma,Y^\sigma}$, give the same verification that these formulas preserve the defining relations and satisfy associativity, the unit axioms, and interchange.
\end{proof}

With the tensor product described in \cref{afftensor}, the canonical functor $\imath_\cC$ of \cref{imath} is strict monoidal.

We use a \emph{positive dot} on a strand to denote the morphism $\xi_X$ and a \emph{negative dot} to denote $\xi_X^{-1}$:
\[
    \begin{tikzpicture}[centerzero]
        \draw (0,-0.3) \botlabel{X^\sigma} -- (0,0.3) \toplabel{X};
        \posdot{0,0};
    \end{tikzpicture}
    := \xi_X,
    \qquad
    \begin{tikzpicture}[centerzero]
        \draw (0,-0.3) \botlabel{X} -- (0,0.3) \toplabel{X^\sigma};
        \negdot{0,0};
    \end{tikzpicture}
    := \xi_X^{-1}.
\]
Thus a positive dot changes a label $X^\sigma$ to $X$, while a negative dot changes $X$ to $X^\sigma$.  Replacing $X,Y,Z$ by $\one,Y,X$, respectively, in the left-hand equation in \cref{stack}, then composing on the right with $\beta_{Y^\sigma,X}^{-1}$ gives
\begin{equation} \label{stealth}
    \xi_{X,Y} = (\xi_Y \otimes 1_X) \circ \beta_{Y^\sigma,X}^{-1}
    =
    \begin{tikzpicture}[centerzero]
        \draw (-0.2,-0.35) \botlabel{X} \braidup (0.2,0.05) -- (0.2,0.35) \toplabel{X};
        \draw[over] (0.2,-0.35) \botlabel{Y^\sigma} \braidup (-0.2,0.05) -- (-0.2,0.35) \toplabel{Y};
        \posdot{-0.2,0.1};
    \end{tikzpicture}
    \ .
\end{equation}
Using \cref{coilrel1,coilrel2,stack}, we obtain
\begin{gather} \label{jaguar+}
    \begin{tikzpicture}[centerzero]
        \draw (-0.3,-0.3) \botlabel{X^\sigma} -- (0.3,0.3) \toplabel{X};
        \draw[over] (0.3,-0.3) \botlabel{Y} -- (-0.3,0.3) \toplabel{Y};
        \posdot{-0.18,-0.18};
    \end{tikzpicture}
    =
    \begin{tikzpicture}[centerzero]
        \draw (0.3,-0.3) \botlabel{Y} -- (-0.3,0.3) \toplabel{Y};
        \draw[over] (-0.3,-0.3) \botlabel{X^\sigma} -- (0.3,0.3) \toplabel{X};
        \posdot{0.15,0.15};
    \end{tikzpicture}
    \ ,\quad
    \begin{tikzpicture}[centerzero]
        \draw (0.3,-0.3) \botlabel{Y^\sigma} -- (-0.3,0.3) \toplabel{Y};
        \draw[over] (-0.3,-0.3) \botlabel{X} -- (0.3,0.3) \toplabel{X};
        \posdot{0.18,-0.18};
    \end{tikzpicture}
    =
    \begin{tikzpicture}[centerzero]
        \draw (-0.3,-0.3) \botlabel{X} -- (0.3,0.3) \toplabel{X};
        \draw[over] (0.3,-0.3) \botlabel{Y^\sigma} -- (-0.3,0.3) \toplabel{Y};
        \posdot{-0.15,0.15};
    \end{tikzpicture}
    \ ,\quad
    \begin{tikzpicture}[centerzero]
        \draw (0,-0.4) \botlabel{(X \otimes Y)^\sigma} -- (0,0.4) \toplabel{X \otimes Y};
        \posdot{0,0};
    \end{tikzpicture}
    =
    \begin{tikzpicture}[centerzero]
        \draw (0.2,-0.4) \botlabel{Y^\sigma} -- (0.2,0.4) \toplabel{Y};
        \draw (-0.2,-0.4) \botlabel{X^\sigma} -- (-0.2,0.4) \toplabel{X};
        \posdot{-0.2,0};
        \posdot{0.2,0};
    \end{tikzpicture}
    \ ,\quad
    \begin{tikzpicture}[centerzero]
        \draw (0,-0.5) \botlabel{X^\sigma} -- (0,0.5) \toplabel{Y};
        \posdot{0,0.25};
        \coupon{0,-0.2}{f^\sigma};
    \end{tikzpicture}
    =
    \begin{tikzpicture}[centerzero]
        \draw (0,-0.5) \botlabel{X^\sigma} -- (0,0.5) \toplabel{Y};
        \posdot{0,-0.25};
        \coupon{0,0.2}{f};
    \end{tikzpicture}
    \ ,
    \\ \label{jaguar-}
    \begin{tikzpicture}[centerzero]
        \draw (0.3,-0.3) \botlabel{Y} -- (-0.3,0.3) \toplabel{Y};
        \draw[over] (-0.3,-0.3) \botlabel{X} -- (0.3,0.3) \toplabel{X^\sigma};
        \negdot{-0.15,-0.15};
    \end{tikzpicture}
    =
    \begin{tikzpicture}[centerzero]
        \draw (-0.3,-0.3) \botlabel{X} -- (0.3,0.3) \toplabel{X^\sigma};
        \draw[over] (0.3,-0.3) \botlabel{Y} -- (-0.3,0.3) \toplabel{Y};
        \negdot{0.18,0.18};
    \end{tikzpicture}
    \ ,\quad
    \begin{tikzpicture}[centerzero]
        \draw (-0.3,-0.3) \botlabel{X} -- (0.3,0.3) \toplabel{X};
        \draw[over] (0.3,-0.3) \botlabel{Y} -- (-0.3,0.3) \toplabel{Y^\sigma};
        \negdot{0.15,-0.15};
    \end{tikzpicture}
    =
    \begin{tikzpicture}[centerzero]
        \draw (0.3,-0.3) \botlabel{Y} -- (-0.3,0.3) \toplabel{Y^\sigma};
        \draw[over] (-0.3,-0.3) \botlabel{X} -- (0.3,0.3) \toplabel{X};
        \negdot{-0.18,0.18};
    \end{tikzpicture}
    \ ,\quad
    \begin{tikzpicture}[centerzero]
        \draw (0,-0.4) \botlabel{X \otimes Y} -- (0,0.4) \toplabel{(X \otimes Y)^\sigma};
        \negdot{0,0};
    \end{tikzpicture}
    =
    \begin{tikzpicture}[centerzero]
        \draw (0.2,-0.4) \botlabel{Y} -- (0.2,0.4) \toplabel{Y^\sigma};
        \draw (-0.2,-0.4) \botlabel{X} -- (-0.2,0.4) \toplabel{X^\sigma};
        \negdot{-0.2,0};
        \negdot{0.2,0};
    \end{tikzpicture}
    \ ,\quad
    \begin{tikzpicture}[centerzero]
        \draw (0,-0.5) \botlabel{X} -- (0,0.5) \toplabel{Y^\sigma};
        \negdot{0,0.25};
        \coupon{0,-0.2}{f};
    \end{tikzpicture}
    =
    \begin{tikzpicture}[centerzero]
        \draw (0,-0.5) \botlabel{X} -- (0,0.5) \toplabel{Y^\sigma};
        \negdot{0,-0.25};
        \coupon{0,0.2}{f^\sigma};
    \end{tikzpicture}
    \ ,
    \\
    \begin{tikzpicture}[centerzero]
        \draw (0,-0.4) \botlabel{X} -- (0,0.4) \toplabel{X};
        \posdot{0,0.17};
        \negdot{0,-0.17};
    \end{tikzpicture}
    =
    \begin{tikzpicture}[centerzero]
        \draw (0,-0.4) \botlabel{X} -- (0,0.4) \toplabel{X};
    \end{tikzpicture}
    \ ,\qquad
    \begin{tikzpicture}[centerzero]
        \draw (0,-0.4) \botlabel{X^\sigma} -- (0,0.4) \toplabel{X^\sigma};
        \negdot{0,0.17};
        \posdot{0,-0.17};
    \end{tikzpicture}
    =
    \begin{tikzpicture}[centerzero]
        \draw (0,-0.4) \botlabel{X^\sigma} -- (0,0.4) \toplabel{X^\sigma};
    \end{tikzpicture}
    \ ,
\end{gather}
for all $X,Y \in \Ob(\cC)$ and $f \in \cC(X,Y)$.  The relations in \cref{jaguar-} follow from those in \cref{jaguar+} by inverting dots and relabeling strands.  
It follows from the third relation in \cref{jaguar+} that $\xi_\one = 1_\one$.  This, together with the third and fourth relations in \cref{jaguar+},  says that $(\xi_X)_{X \in \Ob(\cC)}$ is a monoidal natural isomorphism $\imath_\cC \circ \sigma \Rightarrow \imath_\cC$.

\subsection{Monoidal dot presentations}

When we have a presentation of the braided monoidal category $\cC$, we can give a more efficient presentation of its affinization.  Assume that the objects of $\cC$ are freely generated by a set $S_0$, and that the morphisms of $\cC$ are generated by a set $S_1$.  Let $S_1'$ be obtained from $S_1$ by removing any braiding morphisms and their inverses.

\begin{prop}[Monoidal positive-dot presentation] \label{lion+}
    The twisted affinization $\Aff_\sigma(\cC)$ is isomorphic to the strict monoidal category obtained from $\cC$ by adjoining invertible morphisms
    \begin{equation} \label{lion+gen}
        \begin{tikzpicture}[centerzero]
            \draw (0,-0.3) \botlabel{X^\sigma} -- (0,0.3) \toplabel{X};
            \posdot{0,0};
        \end{tikzpicture}
        \colon X^\sigma \to X,\qquad
        X \in S_0,
    \end{equation}
    subject to the relations
    \begin{equation} \label{lion+rels}
        \begin{tikzpicture}[centerzero]
            \draw (-0.3,-0.3) \botlabel{X^\sigma} -- (0.3,0.3) \toplabel{X};
            \draw[over] (0.3,-0.3) \botlabel{Y} -- (-0.3,0.3) \toplabel{Y};
            \posdot{-0.18,-0.18};
        \end{tikzpicture}
        =
        \begin{tikzpicture}[centerzero]
            \draw (0.3,-0.3) \botlabel{Y} -- (-0.3,0.3) \toplabel{Y};
            \draw[over] (-0.3,-0.3) \botlabel{X^\sigma} -- (0.3,0.3) \toplabel{X};
            \posdot{0.15,0.15};
        \end{tikzpicture}
        \ ,\qquad
        \begin{tikzpicture}[centerzero]
            \draw (0,-0.6) -- (0,0.6);
            \posdot{0,0.25};
            \coupon{0,-0.2}{f^\sigma};
        \end{tikzpicture}
        =
        \begin{tikzpicture}[centerzero]
            \draw (0,-0.6) -- (0,0.6);
            \posdot{0,-0.25};
            \coupon{0,0.2}{f};
        \end{tikzpicture}
        \ ,\qquad
        X,Y \in S_0,\ f \in S_1'.
    \end{equation}
    In this presentation, the positive dots on arbitrary objects are determined by the third relation in \cref{jaguar+}.
\end{prop}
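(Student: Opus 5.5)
We construct mutually inverse strict monoidal functors between $\Aff_\sigma(\cC)$ (with the monoidal structure of \cref{afftensor}) and the category $\cD$ presented in the statement. Both functors will be the identity on objects and on the image of $\cC$, so only dots and coils have to be matched.

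\emph{Step 1: the functor $F\colon \cD\to\Aff_\sigma(\cC)$.} Send the generating dot on $X\in S_0$ to $\xi_X$. The defining relations \cref{lion+rels} of $\cD$ are special cases of the first and fourth relations of \cref{jaguar+}, which hold in $\Aff_\sigma(\cC)$. Hence $F$ is a well-defined strict monoidal functor.

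\emph{Step 2: the functor $G\colon \Aff_\sigma(\cC)\to\cD$.} First define dots on arbitrary objects of $\cD$. Since $\Ob(\cC)$ is free on $S_0$, we may set $\xi_{X_1\otimes\cdots\otimes X_n}:=\xi_{X_1}\otimes\cdots\otimes\xi_{X_n}$ and $\xi_\one:=1_\one$. Then guided by \cref{stealth}, set
\[
    G(\xi_{X,Y}) := (\xi_Y\otimes 1_X)\circ\beta_{Y^\sigma,X}^{-1}.
\]
These morphisms are invertible because dots are invertible in $\cD$.

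\emph{Step 3: checking the coil relations in $\cD$.} We must show that the images satisfy \cref{coilrel1,coilrel2}, and this is the main computation. We first extend the generating relations \cref{lion+rels} to all objects and all morphisms.
\begin{itemize}
    \item The dot-crossing relation extends from $X,Y\in S_0$ to arbitrary $X,Y$ by induction on length. This uses the hexagon identities \cref{hexagon}, the braid relation, and the multiplicativity of dots.
    \item Naturality $\xi_Y\circ f^\sigma=f\circ\xi_X$ holds for $f\in S_1'$ by assumption. It holds for crossings because $\sigma(\beta_{X,Y})=\beta_{X^\sigma,Y^\sigma}$: a double dot slides through a crossing by applying the dot-crossing relation together with \cref{lynx}. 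It holds for inverse crossings likewise. It is preserved under composition and under tensor products, and therefore holds for all morphisms of $\cC$.
    \item The variant relations in \cref{jaguar+} (the second one, with the dot on the other strand) follow by composing with inverse crossings.
\end{itemize}
With these in hand, \cref{coilrel2} follows directly from naturality of dots and naturality of the braiding. For \cref{coilrel1}, both sides are built from $\xi_{Y\otimes Z}=\xi_Y\otimes\xi_Z$ and braidings. The right-hand side is $(\xi_Y\otimes 1_{Z\otimes X})\beta^{-1}_{Y^\sigma,Z\otimes X}(\xi_Z\otimes 1_{X\otimes Y^\sigma})\beta^{-1}_{Z^\sigma,X\otimes Y^\sigma}$. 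We move the $\xi_Z$ dot upward past the $Y^\sigma$ crossing using the extended dot-crossing relation, and then recombine the crossings with \cref{hexagon}. This yields $(\xi_Y\otimes\xi_Z\otimes 1_X)\beta^{-1}_{(Y\otimes Z)^\sigma,X}$, which is the left-hand side. The main obstacle is the bookkeeping here: getting the correct over/under conventions so that the dot passes the right crossing.

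\emph{Step 4: $G$ is monoidal and $F$, $G$ are mutually inverse.} $G$ must also respect the tensor product. Since $\Aff_\sigma(\cC)$ is generated as a category by $\cC$ and coils, and its tensor product is characterized by \cref{stack}, it suffices to check that $G$ sends both sides of \cref{stack} to equal morphisms. This follows from \cref{hexagon} and the dot-crossing relation. Finally:
\begin{itemize}
    \item $F\circ G$ fixes $\cC$, and it fixes coils by \cref{stealth} and the third relation of \cref{jaguar+}.
    \item $G\circ F$ fixes $\cC$ and the generating dots, since $G(\xi_{\one,X})=\xi_X$ by \cref{cable} with $\beta_{X^\sigma,\one}=1$.
\end{itemize}
Hence $F$ and $G$ are mutually inverse isomorphisms. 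The same computation shows that the dots on arbitrary objects are given by the third relation of \cref{jaguar+}.
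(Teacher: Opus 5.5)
Your argument is correct. Its internal reductions match the paper's: you extend the dot--crossing relation to all objects by induction using \cref{hexagon}, obtain the mirrored relation by composing with crossings, and show that the morphisms satisfying dot naturality are closed under composition, tensor product and inverses and include the braidings.

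The difference is in the outer structure. The paper first invokes the twisted generalization of \cite[Thm.~2.6]{MS21}. This is an all-object presentation of $\Aff_\sigma(\cC)$ by dots on every object, subject to the first, third and fourth relations of \cref{jaguar+}. The paper then only checks that the smaller set \cref{lion+rels} implies those relations.

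You instead build $F$ and $G$ directly between $\Aff_\sigma(\cC)$ and the small presentation. So you must verify \cref{coilrel1,coilrel2} for $G(\xi_{X,Y})=(\xi_Y\otimes 1_X)\circ\beta^{-1}_{Y^\sigma,X}$ yourself, and also the compatibility of $G$ with \cref{stack}. In effect you re-prove the twisted MS21 theorem inline, and these checks do go through. For \cref{coilrel1}, you need the dot--crossing relation for the pair $(Z,Y^\sigma)$, then \cref{hexagon}, the braid relation, and a Reidemeister~II cancellation. For \cref{stack}, right tensoring needs only \cref{hexagon}, while left tensoring needs the mirrored dot--crossing relation.

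Your route is self-contained and does not rely on the claim that the MS21 proof generalizes straightforwardly. The paper's route is shorter and isolates the all-object presentation, which needs no freeness assumption on objects. That presentation is reused in the proof of \cref{affactionextension}. Your argument does not produce it, since it uses freeness from the start to define dots on arbitrary objects of $\cD$.

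Two harmless citation slips:
\begin{enumerate}
\item Sliding a pair of dots through a crossing uses the dot--crossing relation and its mirror, not \cref{lynx}, which only concerns morphisms of $\cC$.
\item $GF$ fixes the generating dots because $\xi_{\one,X}=\xi_X$ by definition and $\beta_{X^\sigma,\one}=1_{X^\sigma}$, not because of \cref{cable}.
\end{enumerate}
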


\begin{proof}
    A straightforward generalization of the argument of \cite[Thm.~2.6]{MS21}, which treats the $\sigma=\id_\cC$ case, shows that $\Aff_\sigma(\cC)$ is isomorphic to the strict monoidal category obtained from $\cC$ by adjoining invertible morphisms \cref{lion+gen} for \emph{all} objects $X \in \Ob(\cC)$, subject to the first, third, and fourth relations in \cref{jaguar+}.  Using the third relation in \cref{jaguar+} to \emph{define} positive dots on arbitrary objects, it suffices to consider the smaller generating set \cref{lion+gen}.  It remains to prove that the relations \cref{lion+rels} for $X,Y \in S_0$, $f \in S_1'$ imply that these relations hold for \emph{all} objects and morphisms.

    It is straightforward to see that the first relation in \cref{jaguar+} holds for arbitrary objects.  Composing this relation on top with $\poscross$ and on the bottom with $\negcross$ then gives the second relation in \cref{jaguar+} for arbitrary objects.
    
    It is straightforward to check that the class of morphisms $f$ satisfying the fourth relation in \cref{jaguar+} is a monoidal subcategory of $\cC$ and closed under inverses of invertible morphisms.  Thus, it suffices to impose this relation for $f \in S_1$.  Finally, if $f \colon X \otimes Y \to Y \otimes X$ is a braiding morphism, we have, using only the first, second, and third relations in \cref{jaguar+},
    \[
        \begin{tikzpicture}[centerzero]
            \draw (0,-0.7) \botlabel{X^\sigma \otimes Y^\sigma} -- (0,0.7) \toplabel{Y \otimes X};
            \posdot{0,0.3};
            \coupon{0,-0.3}{f^\sigma};
        \end{tikzpicture}
        =
        \begin{tikzpicture}[centerzero]
            \draw (0.25,-0.7) \botlabel{Y^\sigma} -- (0.25,-0.3) \braidup (-0.25,0.3) -- (-0.25,0.7) \toplabel{Y};
            \draw[over] (-0.25,-0.7) \botlabel{X^\sigma} -- (-0.25,-0.3) \braidup (0.25,0.3) -- (0.25,0.7) \toplabel{X};
            \posdot{-0.25,0.4};
            \posdot{0.25,0.4};
        \end{tikzpicture}
        =
        \begin{tikzpicture}[centerzero]
            \draw (-0.25,-0.7) \botlabel{X^\sigma} -- (-0.25,-0.3) \braidup (0.25,0.3) -- (0.25,0.7) \toplabel{X};
            \draw[over] (0.25,-0.7) \botlabel{Y^\sigma} -- (0.25,-0.3) \braidup (-0.25,0.3) -- (-0.25,0.7) \toplabel{Y};
            \posdot{-0.25,0.4};
            \posdot{-0.25,-0.4};
        \end{tikzpicture}
        =
        \begin{tikzpicture}[centerzero]
            \draw (0.25,-0.7) \botlabel{Y^\sigma} -- (0.25,-0.3) \braidup (-0.25,0.3) -- (-0.25,0.7) \toplabel{Y};
            \draw[over] (-0.25,-0.7) \botlabel{X^\sigma} -- (-0.25,-0.3) \braidup (0.25,0.3) -- (0.25,0.7) \toplabel{X};
            \posdot{-0.25,-0.4};
            \posdot{0.25,-0.4};
        \end{tikzpicture}
        =
        \begin{tikzpicture}[centerzero]
            \draw (0,-0.7) \botlabel{X^\sigma \otimes Y^\sigma} -- (0,0.7) \toplabel{Y \otimes X};
            \posdot{0,-0.3};
            \coupon{0,0.3}{f};
        \end{tikzpicture}
        \ .
    \]
    Thus, we do not need to impose the last relation in \cref{jaguar+} when $f$ is a braiding morphism or its inverse.
\end{proof}

\begin{cor}[Monoidal negative-dot presentation] \label{lion-}
    The twisted affinization $\Aff_\sigma(\cC)$ is isomorphic to the strict monoidal category obtained from $\cC$ by adjoining invertible morphisms
    \begin{equation} \label{lion-gen}
        \begin{tikzpicture}[centerzero]
            \draw (0,-0.3) \botlabel{X} -- (0,0.3) \toplabel{X^\sigma};
            \negdot{0,0};
        \end{tikzpicture}
        \colon X \to X^\sigma,\qquad
        X \in S_0,
    \end{equation}
    subject to the relations
    \begin{equation} \label{lion-rels}
        \begin{tikzpicture}[centerzero]
            \draw (0.3,-0.3) \botlabel{Y} -- (-0.3,0.3) \toplabel{Y};
            \draw[over] (-0.3,-0.3) \botlabel{X} -- (0.3,0.3) \toplabel{X^\sigma};
            \negdot{-0.15,-0.15};
        \end{tikzpicture}
        =
        \begin{tikzpicture}[centerzero]
            \draw (-0.3,-0.3) \botlabel{X} -- (0.3,0.3) \toplabel{X^\sigma};
            \draw[over] (0.3,-0.3) \botlabel{Y} -- (-0.3,0.3) \toplabel{Y};
            \negdot{0.18,0.18};
        \end{tikzpicture}
        \ ,\qquad
        \begin{tikzpicture}[centerzero]
            \draw (0,-0.6) -- (0,0.6);
            \negdot{0,0.25};
            \coupon{0,-0.2}{f};
        \end{tikzpicture}
        =
        \begin{tikzpicture}[centerzero]
            \draw (0,-0.6) -- (0,0.6);
            \negdot{0,-0.25};
            \coupon{0,0.2}{f^\sigma};
        \end{tikzpicture}
        \ ,\qquad
        X,Y \in S_0,\ f \in S_1'.
    \end{equation}
    In this presentation, the negative dots on arbitrary objects are determined by the third relation in \cref{jaguar-}.
\end{cor}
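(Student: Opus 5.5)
The plan is to deduce \cref{lion-} from \cref{lion+} by the substitution ``negative dot $=$ inverse of positive dot'', rather than redoing the argument of \cite[Thm.~2.6]{MS21}. Let $\cD^+$ and $\cD^-$ denote the strict monoidal categories in \cref{lion+} and in the present statement, respectively. Since both dots are required to be invertible, we can define mutually inverse strict monoidal functors $\cD^+\to\cD^-$ and $\cD^-\to\cD^+$. Both are the identity on $\cC$. The first sends the positive dot on $X\in S_0$ to the inverse of the negative dot on $X$, and the second sends the negative dot on $X$ to the inverse of the positive dot. Composing with the isomorphism $\cD^+\cong\Aff_\sigma(\cC)$ of \cref{lion+} will then give the claim.

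To see that $\cD^+\to\cD^-$ is well defined, I will check that the relations \cref{lion+rels} hold in $\cD^-$ for the inverses of the negative dots.
\begin{enumerate}
\item For the first relation of \cref{lion+rels}, take the first relation of \cref{lion-rels} for the pair $(X,Y)$. Compose it on top with the inverse dot on the $X^\sigma$ strand and on the bottom with the inverse dot on the $X$ strand. This gives exactly the first relation of \cref{lion+rels}, with the positive dot being the inverse of the negative one.
\item For the second relation, take $f\colon A\to B$ in $S_1'$. The relation $\xi_B^{-1}\circ f = f^\sigma\circ \xi_A^{-1}$ is equivalent, after pre- and post-composing with the inverses, to $f\circ\xi_A=\xi_B\circ f^\sigma$. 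Here the dots on the tensor-product objects $A$ and $B$ are defined as tensor products of generator dots, via the third relations of \cref{jaguar-} and \cref{jaguar+}. The tensor product of the inverses is the inverse of the tensor product, so these two conventions agree under the functor.
\end{enumerate}
The converse direction is symmetric. In $\cD^+$, inverting the relations \cref{lion+rels} yields \cref{lion-rels}; this is also how \cref{jaguar-} was derived from \cref{jaguar+}.

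The two functors fix $\cC$ and send each generating dot to the inverse of its image's inverse, so they compose to the identity on generators and hence on the whole categories. The only step needing care is the compatibility of the dot conventions on non-generating objects. That is, I must confirm that the tensor-product definition of dots on arbitrary objects in $\cD^-$ matches, under the functor, the one used in \cref{lion+}. This holds because both functors are strict monoidal and inversion commutes with $\otimes$.
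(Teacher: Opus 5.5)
Your proposal is correct and follows the paper's route: the paper states this corollary without a separate proof, as the negative-dot counterpart of \cref{lion+} obtained by inverting the dot generators and relations, just as \cref{jungleneg,swamp-} are deduced from \cref{jungle,swamp+} and \cref{jaguar-} from \cref{jaguar+}. You explicitly check that the crossing relation and the coupon relation invert correctly, and that the dots on non-generating objects defined via tensor products correspond because inversion commutes with $\otimes$; these are exactly the routine details the paper leaves implicit.
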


\begin{rem}\label{monpresrem}
    \begin{enumerate}
        \item When $\cC$ is $\kk$-linear, it suffices to take $S_1$ to be a class of morphisms that generate $\cC$ as a $\kk$-linear monoidal category since the class of morphisms $f$ satisfying the fourth relation in \cref{jaguar+} is a $\kk$-linear monoidal subcategory of $\cC$.
    
        \item \label{monpresrem1} Even in the untwisted case, where $\sigma = \id_\cC$, \cref{lion+} gives a reduced presentation compared to \cite[Thm.~2.6]{MS21}.  Without the assumption that the objects of $\cC$ are freely generated by some subset of objects, one has the obvious twisted analogue of \cite[Thm.~2.6]{MS21} including all the positive dots as generators.
            
        \item \label{monpresrem2} In some of our examples of interest, the presentations in \cref{lion+,lion-} can be simplified even further by omitting the dot generator on one object in a dual pair.  See \cref{SS:AOScat,tAOSmonoidal}.
    \end{enumerate}
\end{rem}

\section{Presentation as a module category\label{S:affmodule}}

Our next goal is to give presentations of affinizations as \emph{module categories}, as opposed to as monoidal categories.

\subsection{Module categories}

Let $(\cC,\otimes, \one)$ be a strict monoidal category, and let $\cM$ be a category.  We denote by $\cEnd(\cM)$ the strict monoidal category of endofunctors of $\cM$ and natural transformations, with tensor product given by composition.   A \emph{right action} of $\cC$ on $\cM$ is a monoidal functor $\bA \colon \cC \to \cEnd(\cM)^\rev$, where $\cD^\rev$ denotes the \emph{reverse} of a monoidal category $\cD$, where we reverse the tensor product.  Given such a right action, we also say that $\cM$ is a \emph{right module category} over $\cC$, or \emph{$\cC$-module} for short; see \cite[\S 7.1]{EGNO15}.  For  $C \in \Ob(\cC)$ and $M \in \Ob(\cM)$, we abbreviate 
\begin{equation} \label{beaver}
    M \otimes C := \bA(C)(M),
\end{equation}
and similarly for morphisms.  We say that the $\cC$-module $\cM$ is \emph{strict} if $\bA$ is a strict monoidal functor.  Equivalently, the associativity and unit constraints of the action are identities; in particular,
\[
    M \otimes (C_1 \otimes C_2) = (M \otimes C_1) \otimes C_2,
    \qquad M \otimes \one = M,
\]
and the corresponding equalities hold for morphisms.

Let $\cM$, $\cN$ be strict $\cC$-modules.  A \emph{morphism of $\cC$-modules} from $\cM$ to $\cN$ is a pair $(\bH,\omega)$, where $\bH$ is a functor from $\cM$ to $\cN$ and $\omega$ is a natural isomorphism with components
\[
    \omega_{M,C} \colon \bH(M \otimes C) \xrightarrow{\cong} \bH(M) \otimes C,\qquad M \in \cM,\ C \in \cC,
\]
such that the diagram
\begin{equation} \label{buffalo}
    \begin{tikzcd}
        & \bH (M \otimes C \otimes D) \arrow[dl,"\omega_{M \otimes C, D}"'] \ar[dr,"\omega_{M, C \otimes D}"] &
        \\
        \bH(M \otimes C) \otimes D \arrow[rr,"\omega_{M,C} \otimes 1_D"] & & \bH(M) \otimes C \otimes D
    \end{tikzcd}
\end{equation}
commutes for all $M \in \cM$ and $C,D \in \mathcal{C}$.  (See \cite[Def.~7.2.1]{EGNO15} for a more general definition, where the module categories are not required to be strict.)  We say that a morphism of $\cC$-modules is \emph{strict} if $\omega_{M,C}$ is the identity morphism for all $M \in \cM$ and $C \in \cC$.  An \emph{equivalence of $\cC$-modules} is a morphism of $\cC$-modules that is also an equivalence of categories.

The definitions and results in \cref{S:affmodule,S:braid,S:affbraided} also apply in the $\kk$-linear setting: categories and functors are $\kk$-linear, and tensor products and module actions are $\kk$-bilinear on morphisms.  In that setting, $\cEnd(\cM)$ consists of $\kk$-linear endofunctors and natural transformations.

\subsection{Presentations of module categories}

We now describe the notion of a \emph{presentation} of a $\cC$-module by generators and relations.  We restrict ourselves to the case where there are only generating morphisms (i.e., no nontrivial generating objects).  Let $\mathtt{M}$ be the set of generating morphisms.  These are formal morphisms whose domains and codomains are objects of $\cC$.  For objects $\lambda$ and $\mu$, let $\mathtt{M}(\lambda,\mu)$ denote the subset of $\mathtt{M}$ consisting of the morphisms with domain $\lambda$ and codomain $\mu$.

Let $\cM$ be the category with the same objects as $\cC$, generated by the morphisms of $\cC$ and formal morphisms
\[
    f \otimes g \colon \lambda \otimes \lambda' \to \mu \otimes \mu',\qquad
    f \in \mathtt{M}(\lambda,\mu),\quad g \in \cC(\lambda',\mu'),
\]
for $\lambda,\lambda',\mu,\mu' \in \Ob(\cC)$.  We retain all composition and identity relations of $\cC$, write $f$ for $f \otimes 1_\one$, and impose the relations
\begin{equation} \label{deer}
    (1_Y \otimes g) \circ (f \otimes h)
    = f \otimes (g \circ h)
    = (f \otimes g) \circ (1_X \otimes h),
\end{equation}
for all $f \colon X \to Y$ in $\mathtt{M}$ and all composable morphisms $g,h$ in $\cC$.  The category $\cM$ has a natural structure of a strict $\cC$-module extending the tensor product of $\cC$, with the action on the formal generators given by
\[
    (f \otimes g) \otimes h := f \otimes (g \otimes h),\quad f \in \mathtt{M},\ g,h \in \Mor(\cC).
\]

Now, let $\mathtt{R}$ be a set of pairs of parallel morphisms in $\cM$, representing the equalities to be imposed.  Let $\hat{\mathtt{R}}$ be the smallest family of equivalence relations on the hom-sets of $\cM$ containing $\mathtt{R}$ and compatible with composition on either side and the right $\cC$-action.  We define the \emph{$\cC$-module generated by the morphisms $\mathtt{M}$ subject to the relations $\mathtt{R}$} to be the quotient category $\cM/\hat{\mathtt{R}}$.  The right $\cC$-action descends to this quotient.

In the $\kk$-linear setting, allow finite $\kk$-linear combinations of morphisms and impose bilinearity of composition and the right action, while preserving the given linear structure of $\cC$.  Relations can then be expressed as linear combinations of parallel morphisms set equal to zero; take their closure under $\kk$-linear combinations, composition on either side, and the right action.

Using the usual string diagram notation for module categories, the relations \cref{deer} become
\begin{equation} \label{snow}
    \begin{tikzpicture}[centerzero]
        \draw (0,-1) -- (0,-0.4) node[draw,fill=white,rounded corners] {$\scriptstyle{f}$} -- (0,1);
        \draw (0.7,-1) -- (0.7,-0.4) node[draw,fill=white,rounded corners] {$\scriptstyle{h}$} -- (0.7,0.4) node[draw,fill=white,rounded corners] {$\scriptstyle{g}$} -- (0.7,1);
    \end{tikzpicture}
    \ =\
    \begin{tikzpicture}[centerzero]
        \draw (0,-1) -- (0,0) node[draw,fill=white,rounded corners] {$\scriptstyle{f}$} -- (0,1);
        \draw (0.8,-1) -- (0.8,0) node[draw,fill=white,rounded corners] {$\scriptstyle{g \circ h}$} -- (0.8,1);
    \end{tikzpicture}
    \ =\
    \begin{tikzpicture}[centerzero]
        \draw (0,-1) -- (0,0.4) node[draw,fill=white,rounded corners] {$\scriptstyle{f}$} -- (0,1);
        \draw (0.7,-1) -- (0.7,-0.4) node[draw,fill=white,rounded corners] {$\scriptstyle{h}$} -- (0.7,0.4) node[draw,fill=white,rounded corners] {$\scriptstyle{g}$} -- (0.7,1);
    \end{tikzpicture}
    \ ,\qquad
    f \in \mathtt{M},\ g,h \in \Mor(\cC).
\end{equation}

\subsection{The affinization as a module category}

Suppose $\cC$ is a braided strict monoidal category and $\sigma \colon \cC \to \cC$ is a strict braided monoidal automorphism.  The twisted affinization $\Aff_\sigma(\cC)$ is naturally a right $\cC$-module with action given on objects by the tensor product and on morphisms by
\[
    f \otimes g := f \otimes \imath_\cC(g),\qquad
    f \in \Mor(\Aff_\sigma(\cC)),\ g \in \Mor(\cC).
\]

\begin{theo} \label{jungle}
    As a right $\cC$-module, the twisted affinization $\Aff_\sigma(\cC)$ is generated by invertible morphisms
    \begin{equation} \label{pole}
        \begin{tikzpicture}[centerzero]
            \draw (0,-0.3) \botlabel{X^\sigma} -- (0,0.3) \toplabel{X};
            \posdot{0,0};
        \end{tikzpicture}
        \colon X^\sigma \to X,\qquad
        X \in \Ob(\cC),
    \end{equation}
    subject to the relations
    \begin{equation} \label{terraform}
        \begin{tikzpicture}[centerzero]
            \draw (0,-0.8) \botlabel{(X \otimes Y)^\sigma} -- (0,0.8) \toplabel{X \otimes Y};
            \posdot{0,0};
        \end{tikzpicture}
        =
        \begin{tikzpicture}[centerzero]
            \draw (-0.25,0) \braidup (0.25,0.5) -- (0.25,0.8) \toplabel{Y};
            \draw[over] (-0.25,-0.8) \botlabel{X^\sigma} -- (-0.25,-0.5) \braidup (0.25,0) \braidup (-0.25,0.5) -- (-0.25,0.8) \toplabel{X};
            \draw[over] (0.25,-0.8) \botlabel{Y^\sigma} -- (0.25,-0.5) \braidup (-0.25,0);
            \posdot{-0.25,-0.5};
            \posdot{-0.25,0};
        \end{tikzpicture}
        =
        \begin{tikzpicture}[centerzero]
            \draw (-0.25,0) \braidup (0.25,0.5) -- (0.25,0.8) \toplabel{Y};
            \draw[over] (-0.25,-0.8) \botlabel{X^\sigma} -- (-0.25,-0.5) \braidup (0.25,0) \braidup (-0.25,0.5) -- (-0.25,0.8) \toplabel{X};
            \draw[over] (0.25,-0.8) \botlabel{Y^\sigma} -- (0.25,-0.5) \braidup (-0.25,0);
            \posdot{-0.25,0.5};
            \posdot{-0.25,0};
        \end{tikzpicture}
        \ ,\qquad
        \begin{tikzpicture}[centerzero]
            \draw (0,-0.8) \botlabel{X^\sigma} -- (0,0.8) \toplabel{Y};
            \posdot{0,-0.3};
            \coupon{0,0.3}{f};
        \end{tikzpicture}
        =
        \begin{tikzpicture}[centerzero]
            \draw (0,-0.8) \botlabel{X^\sigma} -- (0,0.8) \toplabel{Y};
            \posdot{0,0.3};
            \coupon{0,-0.3}{f^\sigma};
        \end{tikzpicture}
        \ ,
    \end{equation}
    for all $X,Y \in \Ob(\cC)$ and $f \in \cC(X,Y)$.
\end{theo}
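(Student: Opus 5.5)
We construct mutually inverse strict $\cC$-module functors between $\Aff_\sigma(\cC)$ and the $\cC$-module $\cM$ presented by the dots \cref{pole} subject to \cref{terraform}. Both are the identity on objects and on $\cC$. We also rely on the monoidal description from the proof of \cref{lion+}: $\Aff_\sigma(\cC)$ is the strict monoidal category obtained from $\cC$ by adjoining invertible dots $\xi_X$ for all $X\in\Ob(\cC)$, subject to the first, third and fourth relations of \cref{jaguar+}.

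First, define $\bF\colon\cM\to\Aff_\sigma(\cC)$ by sending the dot generator to $\xi_X$ and $f\otimes g$ to $f\otimes\imath_\cC(g)$. We check that the relations hold in $\Aff_\sigma(\cC)$. The second relation of \cref{terraform} is the fourth relation of \cref{jaguar+}. For the first relation we use the third relation of \cref{jaguar+}, $\xi_{X\otimes Y}=\xi_X\otimes\xi_Y$. We then use the first relation of \cref{jaguar+} to move the dot on $Y$ across the crossings, giving $\beta$, then $\xi_X\otimes 1$, then $\beta^{-1}$. The two forms differ by sliding the lower dot on $X$ through a double crossing, which is again the first and second relations of \cref{jaguar+} together with \cref{Reid2}. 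The relations \cref{deer} hold automatically because $\Aff_\sigma(\cC)$ is strict monoidal. So $\bF$ is a well-defined strict $\cC$-module functor.

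Next, define $\bG\colon\Aff_\sigma(\cC)\to\cM$ as a functor: send $\xi_X$ to the dot, and $\xi_{X,Y}$ to the morphism given by \cref{stealth}, namely $(\text{dot}_Y\otimes 1_X)\circ\beta^{-1}_{Y^\sigma,X}$, which makes sense in $\cM$. We must check the coil relations \cref{coilrel1,coilrel2} in $\cM$.
\begin{enumerate}
    \item \cref{coilrel2} reduces to naturality of the dot (second relation of \cref{terraform}), using \cref{deer} and naturality \cref{lynx} of the braiding in $\cC$.
    \item \cref{coilrel1} with $\xi_{X,Y\otimes Z}$ expands to $\text{dot}_{Y\otimes Z}$ composed with braidings. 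Using the first relation of \cref{terraform} to split this dot into $\text{dot}_Y$ on the left, then a braid-conjugated $\text{dot}_Z$, this should match the product $\xi_{Z\otimes X,Y}\circ\xi_{X\otimes Y^\sigma,Z}$ after repeated use of \cref{hexagon,Reid2,Reid3}.
\end{enumerate}
Invertibility of the coil images follows from invertibility of the dots. Since $\Aff_\sigma(\cC)$ is presented as a category, not monoidally, this defines $\bG$. We must also show that $\bG$ is a strict $\cC$-module functor, i.e.\ that $\bG(a\otimes 1_Z)=\bG(a)\otimes 1_Z$. It suffices to check this on generators. For coils this is the first formula of \cref{stack}: $\bG(\xi_{X,Y}\otimes 1_Z)=\bG(\xi_{X\otimes Z,Y})\circ(1\otimes\beta_{Y^\sigma,Z})$. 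This holds by the \cref{stealth} formula and the second hexagon identity. Compatibility with the action by morphisms of $\cC$ is built into $\cM$.

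Finally, $\bF\bG$ and $\bG\bF$ are the identity on generators, using \cref{stealth} for coils and \cref{cable} for $\xi_X=\xi_{\one,X}$. Hence they are mutually inverse. I expect the main obstacle to be the verification of \cref{coilrel1} in $\cM$. This is the one place where the first relation of \cref{terraform} must be used in exactly the right form, namely with the dot on $X$ in the middle and braidings around it, to reproduce the composite of two coils. The braid manipulations must also be done using only the right $\cC$-action, since dots in $\cM$ cannot be tensored on the left. I would first draw both sides as diagrams with all dots on the leftmost strand and compare them.
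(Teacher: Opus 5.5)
Your plan is essentially the paper's proof. It uses the same $\bF$ sending the dot to $\xi_X$, the same inverse $\bG$ defined on coils by \cref{stealth}, the same checks of \cref{coilrel1,coilrel2} and of compatibility with the action via \cref{stack,hexagon}, and the same comparison on generators. For the step you flag, the hexagon and braid-naturality moves reduce the image of \cref{coilrel1} under $\bG$ exactly to $d_{Y\otimes Z}=(d_Y\otimes 1_Z)\circ\beta^{-1}_{Y^\sigma,Z}\circ(d_Z\otimes 1_{Y^\sigma})\circ\beta^{-1}_{Z^\sigma,Y^\sigma}$ (writing $d$ for the dot generators); this is the equality of the outer terms in the first relation of \cref{terraform}, i.e.\ the twisted-reflection form with the conjugated $Z$-dot below the $Y$-dot rather than the form with the $Y$-dot at the bottom, and the paper isolates it as \cref{mouse2} via the right-strand dot \cref{mouse1}.
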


The second equality in \cref{terraform} is called the \emph{twisted reflection equation}.

\begin{proof}
    Let $\Mcat$ be the $\cC$-module generated by the morphisms \cref{pole} subject to the relations \cref{terraform}.  We will draw morphisms in $\Mcat$ in cyan, to help distinguish them from morphisms in $\Aff_\sigma(\cC)$.  Define a strict morphism of $\cC$-modules $\bF \colon \Mcat \to \Aff_\sigma(\cC)$ by fixing objects, sending each morphism $f$ of $\cC$ to $\imath_\cC(f)$, and
    \begin{equation} \label{Fdef}
        \bF \colon
        \begin{tikzpicture}[MC,centerzero]
            \draw (0,-0.3) \botlabel{X^\sigma} -- (0,0.3) \toplabel{X};
            \posdot{0,0};
        \end{tikzpicture}
        \mapsto
        \begin{tikzpicture}[centerzero]
            \draw (0,-0.3) \botlabel{X^\sigma} -- (0,0.3) \toplabel{X};
            \posdot{0,0};
        \end{tikzpicture}
        \ ,\qquad X \in \Ob(\cC).
    \end{equation}
    We first show that this functor is well defined.  Since it clearly respects \cref{snow}, it remains to verify that it respects the relations \cref{terraform}.  In $\Aff_\sigma(\cC)$, we have, for all objects $X$ and $Y$,
    \begin{gather*}
        \begin{tikzpicture}[centerzero]
            \draw (-0.25,0) \braidup (0.25,0.5) -- (0.25,0.8) \toplabel{Y};
            \draw[over] (-0.25,-0.8) \botlabel{X^\sigma} -- (-0.25,-0.5) \braidup (0.25,0) \braidup (-0.25,0.5) -- (-0.25,0.8) \toplabel{X};
            \draw[over] (0.25,-0.8) \botlabel{Y^\sigma} -- (0.25,-0.5) \braidup (-0.25,0);
            \posdot{-0.25,-0.5};
            \posdot{-0.25,0};
        \end{tikzpicture}
        \ceqref{jaguar+}
        \begin{tikzpicture}[centerzero]
            \draw (-0.25,-0.8) \botlabel{X^\sigma} -- (-0.25,-0.5) \braidup (0.25,0) \braidup (-0.25,0.5) -- (-0.25,0.8) \toplabel{X};
            \draw[over] (0.25,-0.8) \botlabel{Y^\sigma} -- (0.25,-0.5) \braidup (-0.25,0) \braidup (0.25,0.5) -- (0.25,0.8) \toplabel{Y};
            \posdot{-0.25,-0.5};
            \posdot{0.25,0.5};
        \end{tikzpicture}
        \ceqref{Reid2}
        \begin{tikzpicture}[centerzero]
            \draw (-0.25,-0.8) \botlabel{X^\sigma} -- (-0.25,0.8) \toplabel{X};
            \draw[over] (0.25,-0.8) \botlabel{Y^\sigma} -- (0.25,0.8) \toplabel{Y};
            \posdot{-0.25,0};
            \posdot{0.25,0};
        \end{tikzpicture}
        \ceqref{jaguar+}
        \begin{tikzpicture}[centerzero]
            \draw (0,-0.8) \botlabel{(X \otimes Y)^\sigma} -- (0,0.8) \toplabel{X \otimes Y};
            \posdot{0,0};
        \end{tikzpicture}
        \ ,\\[1ex]
        \begin{tikzpicture}[centerzero]
            \draw (-0.25,0) \braidup (0.25,0.5) -- (0.25,0.8) \toplabel{Y};
            \draw[over] (-0.25,-0.8) \botlabel{X^\sigma} -- (-0.25,-0.5) \braidup (0.25,0) \braidup (-0.25,0.5) -- (-0.25,0.8) \toplabel{X};
            \draw[over] (0.25,-0.8) \botlabel{Y^\sigma} -- (0.25,-0.5) \braidup (-0.25,0);
            \posdot{-0.25,0.5};
            \posdot{-0.25,0};
        \end{tikzpicture}
        \ceqref{jaguar+}
        \begin{tikzpicture}[centerzero]
            \draw (0.25,-0.8) \botlabel{Y^\sigma} -- (0.25,-0.5) \braidup (-0.25,0) \braidup (0.25,0.5) -- (0.25,0.8) \toplabel{Y};
            \draw[over] (-0.25,-0.8) \botlabel{X^\sigma} -- (-0.25,-0.5) \braidup (0.25,0) \braidup (-0.25,0.5) -- (-0.25,0.8) \toplabel{X};
            \posdot{-0.25,0.5};
            \posdot{0.25,-0.5};
        \end{tikzpicture}        
        \ceqref{Reid2}
        \begin{tikzpicture}[centerzero]
            \draw (-0.25,-0.8) \botlabel{X^\sigma} -- (-0.25,0.8) \toplabel{X};
            \draw[over] (0.25,-0.8) \botlabel{Y^\sigma} -- (0.25,0.8) \toplabel{Y};
            \posdot{-0.25,0};
            \posdot{0.25,0};
        \end{tikzpicture}
        \ceqref{jaguar+}
        \begin{tikzpicture}[centerzero]
            \draw (0,-0.8) \botlabel{(X \otimes Y)^\sigma} -- (0,0.8) \toplabel{X \otimes Y};
            \posdot{0,0};
        \end{tikzpicture}
        \ .
    \end{gather*}
    Thus, $\bF$ respects the first and second relations in \cref{terraform}.   It also respects the last relation in \cref{terraform} by the last relation in \cref{jaguar+}.
    
    We now construct an inverse.  A priori, positive and negative dots can only appear on the left side of diagrams in $\Mcat$.  For $X,Y \in \Ob(\cC)$, we define the following morphism from $X^\sigma \otimes Y^\sigma$ to $X^\sigma \otimes Y$:
    \begin{equation} \label{mouse1}
        \begin{tikzpicture}[MC,centerzero]
            \draw (-0.2,-0.45) \botlabel{X^\sigma} -- (-0.2,0.45) \toplabel{X^\sigma};
            \draw (0.2,-0.45) \botlabel{Y^\sigma} -- (0.2,0.45) \toplabel{Y};
            \posdot{0.2,0};
        \end{tikzpicture}
        :=
        \begin{tikzpicture}[MC,centerzero]
            \draw (-0.2,-0.45) \botlabel{X^\sigma} \braidup (0.2,0);
            \draw[over] (0.2,-0.45) \botlabel{Y^\sigma} \braidup (-0.2,0) \braidup (0.2,0.45) \toplabel{Y};
            \draw[over] (0.2,0) \braidup (-0.2,0.45) \toplabel{X^\sigma};
            \posdot{-0.2,0};
        \end{tikzpicture}
        \ .
    \end{equation}
    Since every object of $\cC$ is of the form $X^\sigma$, this defines a positive dot to the right of any object.  Negative dots in that position denote the inverse morphisms.  It follows that, in $\Mcat$,
    \begin{equation} \label{mouse2}
        \begin{tikzpicture}[MC,centerzero]
            \draw (0.2,-0.4) \botlabel{Y^\sigma} -- (0.2,0.4) \toplabel{Y};
            \draw (-0.2,-0.4) \botlabel{X^\sigma} -- (-0.2,0.4) \toplabel{X};
            \posdot{-0.2,0.1};
            \posdot{0.2,-0.1};
        \end{tikzpicture}
        \ceqref{mouse1}
        \begin{tikzpicture}[MC,centerzero]
            \draw (-0.2,0) \braidup (0.2,0.5) -- (0.2,0.8) \toplabel{Y};
            \draw[over] (-0.2,-0.8) \botlabel{X^\sigma} -- (-0.2,-0.5) \braidup (0.2,0) \braidup (-0.2,0.5) -- (-0.2,0.8) \toplabel{X};
            \draw[over] (0.2,-0.8) \botlabel{Y^\sigma} -- (0.2,-0.5) \braidup (-0.2,0);
            \posdot{-0.2,0.5};
            \posdot{-0.2,0};
        \end{tikzpicture}
        \ceqref{terraform}
        \begin{tikzpicture}[MC,centerzero]
            \draw (0,-0.4) \botlabel{(X \otimes Y)^\sigma} -- (0,0.4) \toplabel{X \otimes Y};
            \posdot{0,0};
        \end{tikzpicture}
        \ceqref{terraform}
        \begin{tikzpicture}[MC,centerzero]
            \draw (-0.2,0) \braidup (0.2,0.5) -- (0.2,0.8) \toplabel{Y};
            \draw[over] (-0.2,-0.8) \botlabel{X^\sigma} -- (-0.2,-0.5) \braidup (0.2,0) \braidup (-0.2,0.5) -- (-0.2,0.8) \toplabel{X};
            \draw[over] (0.2,-0.8) \botlabel{Y^\sigma} -- (0.2,-0.5) \braidup (-0.2,0);
            \posdot{-0.2,-0.5};
            \posdot{-0.2,0};
        \end{tikzpicture}
        \ceqref{mouse1}
        \begin{tikzpicture}[MC,centerzero]
            \draw (0.2,-0.4) \botlabel{Y^\sigma} -- (0.2,0.4) \toplabel{Y};
            \draw (-0.2,-0.4) \botlabel{X^\sigma} -- (-0.2,0.4) \toplabel{X};
            \posdot{-0.2,-0.1};
            \posdot{0.2,0.1};
        \end{tikzpicture}
        \ .
    \end{equation}
    
    Motivated by \cref{stealth}, we define a functor
    \[
        \bG \colon \Aff_\sigma(\cC) \to \Mcat
    \]
    by sending objects and morphisms of $\cC$ to themselves, and by
    \begin{equation} \label{Gdef}
        \bG(\xi_{X,Y})
        :=
        \begin{tikzpicture}[MC,centerzero]
            \draw (-0.2,-0.35) \botlabel{X} \braidup (0.2,0.05) -- (0.2,0.35) \toplabel{X};
            \draw[over] (0.2,-0.35) \botlabel{Y^\sigma} \braidup (-0.2,0.05) -- (-0.2,0.35) \toplabel{Y};
            \posdot{-0.2,0.1};
        \end{tikzpicture}
        \ ,\qquad X,Y \in \Ob(\cC).
    \end{equation}
    Since the right-hand side above is invertible, this also determines the image of $\xi_{X,Y}^{-1}$.
    
    We check that $\bG$ respects \cref{coilrel1,coilrel2}.  For \cref{coilrel1},
    \[
        \bG( \xi_{X,Y \otimes Z} )
        \ceqref{Gdef}
        \begin{tikzpicture}[MC,centerzero]
            \draw (-0.3,-0.5) \botlabelshift{-3pt}{X} \braidup (0.3,0.1) -- (0.3,0.5) \toplabelshift{3pt}{X};
            \draw[over] (0.3,-0.5) \botlabelshift{6pt}{(Y \otimes Z)^\sigma} \braidup (-0.3,0.1) -- (-0.3,0.5) \toplabel{Y \otimes Z};
            \posdot{-0.3,0.2};
        \end{tikzpicture}
        \ceqref{mouse2}
        \begin{tikzpicture}[MC,centerzero]
            \draw (-0.4,-1) \botlabel{X} \braidup (0.4,0.2) -- (0.4,1) \toplabel{X};
            \draw[over] (0,-1) \botlabel{Y^\sigma} \braidup (-0.4,-0.4) \braidup (0,0.2);
            \draw[over] (0.4,-1) \botlabel{Z^\sigma} \braidup (-0.4,0.2) \braidup (0,0.8) -- (0,1) \toplabel{Z};
            \draw[over] (0,0.2) \braidup (-0.4,0.8) -- (-0.4,1) \toplabel{Y};
            \posdot{-0.4,0.2};
            \posdot{-0.4,0.8};
        \end{tikzpicture}
        \ceqref{Reid3}
        \begin{tikzpicture}[MC,centerzero]
            \draw (-0.4,-1) \botlabel{X} \braidup (0.4,1) \toplabel{X};
            \draw[over] (0,-1) \botlabel{Y^\sigma} \braidup (0.4,-0.1);
            \draw[over] (0.4,-1) \botlabel{Z^\sigma} \braidup (-0.4,0.1) \braidup (0,1) \toplabel{Z};
            \draw[over] (0.4,-0.1) \braidup (-0.4,0.8) -- (-0.4,1) \toplabel{Y};
            \posdot{-0.4,0.1};
            \posdot{-0.4,0.8};
        \end{tikzpicture}
        \ceqref{Gdef}
        \bG(\xi_{Z \otimes X,Y}) \circ \bG(\xi_{X \otimes Y^\sigma,Z}).
    \]
    For \cref{coilrel2}, let $g \in \cC(X_1,X_2)$ and $f \in \cC(Y_1,Y_2)$.  Then
    \[
        \bG(\xi_{X_2,Y_2}) \circ (g \otimes f^\sigma)
        \ceqref{Gdef}
        \begin{tikzpicture}[MC,centerzero]
            \draw (-0.3,-0.6) \botlabel{X_1} -- (-0.3,-0.1) \braidup (0.3,0.4) -- (0.3,0.6) \toplabel{X_2};
            \draw[over] (0.3,-0.6) \botlabel{Y_1^\sigma} -- (0.3,-0.1) \braidup (-0.3,0.4) -- (-0.3,0.6) \toplabel{Y_2};
            \posdot{-0.3,0.4};
            \coupon{0.3,-0.3}{f^\sigma};
            \coupon{-0.3,-0.3}{g};
        \end{tikzpicture}
        \ceqref{lynx}
        \begin{tikzpicture}[MC,centerzero]
            \draw (-0.3,-0.6) \botlabel{X_1} \braidup (0.3,-0.1) -- (0.3,0.6) \toplabel{X_2};
            \draw[over] (0.3,-0.6) \botlabel{Y_1^\sigma} \braidup (-0.3,-0.1) -- (-0.3,0.6) \toplabel{Y_2};
            \posdot{-0.3,0.4};
            \coupon{0.3,0}{g};
            \coupon{-0.3,0}{f^\sigma};
        \end{tikzpicture}
        \ceqref{terraform}
        \begin{tikzpicture}[MC,centerzero]
            \draw (-0.3,-0.6) \botlabel{X_1} \braidup (0.3,-0.1) -- (0.3,0.6) \toplabel{X_2};
            \draw[over] (0.3,-0.6) \botlabel{Y_1^\sigma} \braidup (-0.3,-0.1) -- (-0.3,0.6) \toplabel{Y_2};
            \posdot{-0.3,-0.1};
            \coupon{0.3,0.3}{g};
            \coupon{-0.3,0.3}{f};
        \end{tikzpicture}
        \ceqref{Gdef} (f \otimes g) \circ \bG(\xi_{X_1,Y_1}).
    \]
    
    The functor $\bG$ also respects the right $\cC$-action.  Indeed, \cref{stack,Gdef,hexagon} give
    \[
        \bG(\xi_{X,Y}\otimes1_Z)
        =
        \begin{tikzpicture}[MC,centerzero]
            \draw (-0.4,-0.4) \botlabel{X} \braidup (0,0.05) -- (0,0.4) \toplabel{X};
            \draw[over] (0,-0.4) \botlabel{Y^\sigma} \braidup (-0.4,0.05) -- (-0.4,0.4) \toplabel{Y};
            \draw (0.4,-0.4) \botlabel{Z} -- (0.4,0.4) \toplabel{Z};
            \posdot{-0.4,0.12};
        \end{tikzpicture}
        =
        \bG(\xi_{X,Y})\otimes1_Z.
    \]
    The same holds for inverse coils and for morphisms of $\cC$, and hence for all morphisms.  Thus, $\bG$ is a strict morphism of $\cC$-modules.  It remains to show that $\bF$ and $\bG$ are inverse.  They are clearly inverse on objects and on morphisms of $\cC$.  For a coil generator, we have
    \[
        \bF\bG(\xi_{X,Y})
        \ceqref{Gdef} \bF
        \left(
            \begin{tikzpicture}[MC,centerzero]
                \draw (-0.2,-0.35) \botlabel{X} \braidup (0.2,0.05) -- (0.2,0.35) \toplabel{X};
                \draw[over] (0.2,-0.35) \botlabel{Y^\sigma} \braidup (-0.2,0.05) -- (-0.2,0.35) \toplabel{Y};
                \posdot{-0.2,0.1};
            \end{tikzpicture}
        \right)
        \ceqref{Fdef}
        \begin{tikzpicture}[centerzero]
            \draw (-0.2,-0.35) \botlabel{X} \braidup (0.2,0.05) -- (0.2,0.35) \toplabel{X};
            \draw[over] (0.2,-0.35) \botlabel{Y^\sigma} \braidup (-0.2,0.05) -- (-0.2,0.35) \toplabel{Y};
            \posdot{-0.2,0.1};
        \end{tikzpicture}
        \ceqref{stealth}
        \xi_{X,Y}. 
    \]
    Hence $\bF \bG= \id_{\Aff_\sigma(\cC)}$, since these functors agree on the generators in \cref{affdef}.
    
    Conversely, if $g \in \cC(Y,Z)$, then
    \[
        \bG \bF
        \left(
            \begin{tikzpicture}[MC,centerzero]
                \draw (-0.2,-0.5) \botlabel{X^\sigma} -- (-0.2,0.5) \toplabel{X};
                \draw (0.2,-0.5) \botlabel{Y} -- (0.2,0.5) \toplabel{Z};
                \posdot{-0.2,0};
                \coupon{0.2,0}{g};
            \end{tikzpicture}
        \right)
        = \bG
        \left(
            \begin{tikzpicture}[centerzero]
                \draw (-0.2,-0.5) \botlabel{X^\sigma} -- (-0.2,0.5) \toplabel{X};
                \draw (0.2,-0.5) \botlabel{Y} -- (0.2,0.5) \toplabel{Z};
                \posdot{-0.2,0};
                \coupon{0.2,0}{g};
            \end{tikzpicture}
        \right)
        \ceqref{stealth} (1_X \otimes g) \circ \bG(\xi_{Y,X}) \circ \beta_{X^\sigma,Y}
        \ceqref{Gdef}
        \begin{tikzpicture}[MC,centerzero]
            \draw (-0.2,-0.5) \botlabel{X^\sigma} -- (-0.2,0.5) \toplabel{X};
            \draw (0.2,-0.5) \botlabel{Y} -- (0.2,0.5) \toplabel{Z};
            \posdot{-0.2,0};
            \coupon{0.2,0}{g};
        \end{tikzpicture}
        \ .
    \]
    Thus, $\bG\bF = \id_{\Mcat}$, since these module functors agree on the generators of $\Mcat$.
\end{proof}

\begin{cor} \label{jungleneg}
    As a right $\cC$-module, the twisted affinization $\Aff_\sigma(\cC)$ is generated by invertible morphisms
    \begin{equation}
        \begin{tikzpicture}[centerzero]
            \draw (0,-0.3) \botlabel{X} -- (0,0.3) \toplabel{X^\sigma};
            \negdot{0,0};
        \end{tikzpicture}
        \colon X \to X^\sigma,\qquad
        X \in \Ob(\cC),
    \end{equation}
    subject to the relations
    \begin{equation}\label{griffin}
        \begin{tikzpicture}[centerzero]
            \draw (0,-0.8) \botlabel{X \otimes Y} -- (0,0.8) \toplabel{(X \otimes Y)^\sigma};
            \negdot{0,0};
        \end{tikzpicture}
        =
        \begin{tikzpicture}[centerzero]
            \draw (0.25,-0.8) \botlabel{Y} -- (0.25,-0.5) \braidup (-0.25,0);
            \draw[over] (-0.25,-0.8) \botlabel{X} -- (-0.25,-0.5) \braidup (0.25,0) \braidup (-0.25,0.5) -- (-0.25,0.8) \toplabel{X^\sigma};
            \draw[over] (-0.25,0) \braidup (0.25,0.5) -- (0.25,0.8) \toplabel{Y^\sigma};
            \negdot{-0.25,-0.5};
            \negdot{-0.25,0};
        \end{tikzpicture}
        =
        \begin{tikzpicture}[centerzero]
            \draw (0.25,-0.8) \botlabel{Y} -- (0.25,-0.5) \braidup (-0.25,0);
            \draw[over] (-0.25,-0.8) \botlabel{X} -- (-0.25,-0.5) \braidup (0.25,0) \braidup (-0.25,0.5) -- (-0.25,0.8) \toplabel{X^\sigma};
            \draw[over] (-0.25,0) \braidup (0.25,0.5) -- (0.25,0.8) \toplabel{Y^\sigma};
            \negdot{-0.25,0.5};
            \negdot{-0.25,0};
        \end{tikzpicture}
        \ ,\qquad
        \begin{tikzpicture}[centerzero]
            \draw (0,-0.8) \botlabel{X} -- (0,0.8) \toplabel{Y^\sigma};
            \negdot{0,-0.3};
            \coupon{0,0.3}{f^\sigma};
        \end{tikzpicture}
        =
        \begin{tikzpicture}[centerzero]
            \draw (0,-0.8) \botlabel{X} -- (0,0.8) \toplabel{Y^\sigma};
            \negdot{0,0.3};
            \coupon{0,-0.3}{f};
        \end{tikzpicture}
        \ ,
    \end{equation}
    for all $X,Y \in \Ob(\cC)$ and $f \in \cC(X,Y)$.
\end{cor}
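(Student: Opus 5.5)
We derive \cref{jungleneg} from \cref{jungle} by inverting dots rather than repeating the whole argument. Let $\Mcat'$ be the $\cC$-module generated by invertible negative dots $X \to X^\sigma$, $X \in \Ob(\cC)$, subject to \cref{griffin}. We define mutually inverse strict $\cC$-module morphisms between $\Mcat'$ and the module $\Mcat$ of the proof of \cref{jungle}. Both send morphisms of $\cC$ to themselves. The functor $\Mcat \to \Mcat'$ sends the positive dot on $X$ to the inverse of the negative dot on $X$. Here we use that a positive dot $X^\sigma \to X$ and a negative dot $X \to X^\sigma$ have opposite domains and codomains, so inversion is well typed. The functor $\Mcat' \to \Mcat$ sends the negative dot on $X$ to the inverse of the positive dot on $X$.

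The key step is to check that inverting dots converts \cref{griffin} into \cref{terraform}, and conversely.

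For the naturality relation, take the last relation of \cref{griffin} with $f \in \cC(X,Y)$: $\xi^{-1}_Y \circ f = f^\sigma \circ \xi^{-1}_X$. Composing with $\xi_Y$ on the left and with $\xi_X$ on the right gives $f \circ \xi_X = \xi_Y \circ f^\sigma$. This is exactly the last relation of \cref{terraform}.

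For the tensor-product and reflection relations, each diagram in \cref{griffin} is a composite of negative dots on the left strand and crossings. Taking inverses reverses the order of composition. A negative dot becomes a positive dot. A crossing becomes its inverse crossing, which is the same as flipping the diagram vertically, so an overcrossing read upward stays an overcrossing of the same strands. Relabeling $X^\sigma, Y^\sigma$ as the bottom labels then turns the three expressions in the first relation of \cref{griffin} into the three expressions in the first relation of \cref{terraform}.

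This crossing bookkeeping is the one place that needs care. We must confirm that the vertical reflection of
\[
    (\xi_X^{-1} \otimes 1) \circ \beta^{\pm1} \circ (\xi_X^{-1} \otimes 1) \circ \beta^{\pm1}
\]
produces precisely the pictures in \cref{terraform}, with the same over/under pattern, rather than their mirror images. We would verify this by writing both sides algebraically as words in $\beta$, $\beta^{-1}$ and dots, and inverting. The relations are compatible with the right $\cC$-action on each side, so both functors descend to the quotients.

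The two functors are then mutually inverse on generators, hence inverse. Composing with the isomorphism $\Mcat \cong \Aff_\sigma(\cC)$ of \cref{jungle} gives the claim, and under this identification the negative dot is sent to $\xi_X^{-1}$.

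As a cross-check, the same conclusion should also follow by repeating the proof of \cref{jungle} with \cref{jaguar-} in place of \cref{jaguar+}.
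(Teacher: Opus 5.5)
Your proposal is correct and is the same argument as the paper's, whose proof consists of the single remark that the corollary follows by inverting the generators and relations of \cref{jungle}. The crossing check you flag does go through: the inverse of the middle (resp.\ right) expression in \cref{griffin} is exactly the right (resp.\ middle) expression in \cref{terraform}, so the three-term equality, and with it the presentation, transfers under the correspondence between the negative dot and $\xi_X^{-1}$.
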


\begin{proof}
    This follows from inverting the generators and identities in \cref{jungle}.
\end{proof}

\subsection{Simplified module-category presentation of the affinization}

We continue to suppose that $\cC$ is a braided strict monoidal category and that $\sigma \colon \cC \to \cC$ is a strict braided monoidal automorphism.  When $\cC$ is given by generators and relations, the presentation of $\Aff_\sigma(\cC)$ described by \cref{jungle,jungleneg} can be simplified.  Throughout this subsection, we assume that the objects of $\cC$ are freely generated under tensor product by a set $S_0$, and that $S_1$ is a set of morphisms generating $\cC$ as a monoidal category, or as a $\kk$-linear monoidal category in the linear setting.  Let $S_1'$ be obtained from $S_1$ by removing any braiding morphisms and their inverses.

\begin{prop}[Module-category positive-dot presentation]\label{swamp+}
    As a right $\cC$-module, the twisted affinization $\Aff_\sigma(\cC)$ is generated by invertible morphisms
    \[
        \begin{tikzpicture}[centerzero]
            \draw (0,-0.3) \botlabel{X^\sigma} -- (0,0.3) \toplabel{X};
            \posdot{0,0};
        \end{tikzpicture}
        \colon X^\sigma \to X,\qquad
        X \in S_0,
    \]
    subject to the relations
    \begin{equation} \label{life}
        \begin{tikzpicture}[centerzero]
            \draw (-0.25,0) \braidup (0.25,0.5) -- (0.25,0.8) \toplabel{Y};
            \draw[over] (-0.25,-0.8) \botlabel{X^\sigma} -- (-0.25,-0.5) \braidup (0.25,0) \braidup (-0.25,0.5) -- (-0.25,0.8) \toplabel{X};
            \draw[over] (0.25,-0.8) \botlabel{Y^\sigma} -- (0.25,-0.5) \braidup (-0.25,0);
            \posdot{-0.25,-0.5};
            \posdot{-0.25,0};
        \end{tikzpicture}
        =
        \begin{tikzpicture}[centerzero]
            \draw (-0.25,0) \braidup (0.25,0.5) -- (0.25,0.8) \toplabel{Y};
            \draw[over] (-0.25,-0.8) \botlabel{X^\sigma} -- (-0.25,-0.5) \braidup (0.25,0) \braidup (-0.25,0.5) -- (-0.25,0.8) \toplabel{X};
            \draw[over] (0.25,-0.8) \botlabel{Y^\sigma} -- (0.25,-0.5) \braidup (-0.25,0);
            \posdot{-0.25,0.5};
            \posdot{-0.25,0};
        \end{tikzpicture}
        \ ,\qquad
        \begin{tikzpicture}[centerzero]
            \draw (0,-0.8) \botlabel{W^\sigma} -- (0,0.8) \toplabel{Z};
            \posdot{0,-0.3};
            \coupon{0,0.3}{f};
        \end{tikzpicture}
        =
        \begin{tikzpicture}[centerzero]
            \draw (0,-0.8) \botlabel{W^\sigma} -- (0,0.8) \toplabel{Z};
            \posdot{0,0.3};
            \coupon{0,-0.3}{f^\sigma};
        \end{tikzpicture}
        \ ,
    \end{equation}
    for all $X,Y \in S_0$ and $f \colon W \to Z $ in $S_1'$.  The positive dots on arbitrary objects are defined recursively by the first equality in \cref{terraform} with $X \in S_0$, and the positive dot on $\one$ is $1_\one$.
\end{prop}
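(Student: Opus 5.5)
We will deduce this from \cref{jungle}.  Let $\Mcat'$ be the right $\cC$-module generated by the dots on $X \in S_0$ subject to \cref{life}.  Define dots on arbitrary objects recursively: on $\one$ the dot is $1_\one$, and for a word $X \otimes Y$ with $X \in S_0$ the dot is given by the middle expression in \cref{terraform}.  Freeness of the objects makes this well defined.  There is an obvious strict $\cC$-module functor $\Mcat' \to \Aff_\sigma(\cC)$.  It is well defined because the relations \cref{life} are instances of \cref{terraform}.  Hence it suffices to show that all relations \cref{terraform}, for arbitrary objects and all morphisms $f$ of $\cC$, hold in $\Mcat'$.  Then \cref{jungle} supplies an inverse functor, and the two functors are inverse on generators.

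\emph{Step 1: the tensor relation and the reflection equation for all objects.}  The first equality of \cref{terraform} holds by definition when $X \in S_0$.  We extend it to arbitrary $X$ by induction on the length of $X$, writing $X = X_1 \otimes X'$ with $X_1 \in S_0$.  The recursive definition expresses the dot on $X_1 \otimes X' \otimes Y$ through the dot on $X_1$ and the dot on $X' \otimes Y$ pulled around.  Expanding the dot on $X' \otimes Y$ by induction and regrouping with \cref{Reid3} and \cref{hexagon} gives the claimed formula for $(X_1\otimes X') \otimes Y$.

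Next we prove the twisted reflection equation for arbitrary $X,Y$ by induction on the lengths of $X$ and $Y$, starting from \cref{life} for generators.  It helps to use the auxiliary ``dot to the right'' from \cref{mouse1}.  In that language the reflection equation says that a left dot commutes with a right dot, as in \cref{mouse2}, and commutation is visibly multiplicative in each factor.  Concretely, we decompose $Y = Y_1 \otimes Y'$.  The right dot on $Y$ then factors, using \cref{hexagon}, as a product of conjugated right dots on $Y_1$ and $Y'$, and commutation with the left dot on $X$ follows factor by factor.  The decomposition in $X$ is handled similarly.  Once we have the reflection equation for all objects, the second and third expressions of \cref{terraform} agree, so the first equality holds with either form.

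\emph{Step 2: naturality for all $f$.}  Let $\mathcal{S}$ be the class of morphisms $f$ of $\cC$ satisfying the last relation of \cref{terraform} in $\Mcat'$.  We show $\mathcal{S}$ is closed under composition, which is immediate, and under inverses.  We then show it is closed under tensor products $f \otimes g$.  For $f \otimes 1$, we write the dot on $W \otimes V$ via the recursive formula from Step 1 and slide $f$ through the crossings with \cref{lynx}.  For $1 \otimes g$, we slide $g$ through the crossings and use the naturality of $g$ past the dot, exactly as in the \cref{coilrel2} computation in the proof of \cref{jungle}.  In the linear setting, $\mathcal{S}$ is also closed under linear combinations.  Thus it suffices to check generators in $S_1$.

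Those in $S_1'$ hold by \cref{life}.  For a braiding $\beta_{X,Y}$, the relation follows from the tensor relation and the reflection equation of Step 1, mimicking the displayed computation in the proof of \cref{lion+}: expand the dot on $X\otimes Y$ and on $Y \otimes X$ via \cref{terraform}, and compare using \cref{Reid2,Reid3}.  Inverse braidings are then covered by closure under inverses.

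\emph{Main obstacle.}  We expect the main obstacle to be Step 1, namely the inductive proof of the reflection equation and the associativity of the recursive definition.  The dots only live at the left edge of a module diagram, so every manipulation must be carried out by conjugating with braidings.  Keeping track of over- and under-crossings in the induction is where errors could occur, and the reformulation via \cref{mouse1,mouse2} is what we would try first to control this.
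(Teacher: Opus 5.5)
Your proposal is correct and follows essentially the same route as the paper. It proves the tensor relation for arbitrary decompositions by induction on length, then the twisted reflection equation by induction on total length; your factorization of the right dot of \cref{mouse1} into conjugated right dots is a tidier bookkeeping of the braid manipulations the paper draws explicitly. It then reduces the naturality relation to $S_1'$ via closure properties, with braidings handled by the reflection equation.

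One small point in your favour: your direct sliding argument for closure under $f\otimes 1_V$ and $1_W\otimes g$ needs only the tensor relation from Step~1. It does not need the reflection equation, which the paper invokes through \cite[Lem.~2.4]{SSSsub26}.
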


\begin{proof}
    We first show that the first two equalities in \cref{terraform} hold for all objects of $\cC$. Since the objects of $\cC$ are freely generated by $S_0$, the positive dot on an object $X_1 \otimes X_2 \otimes \dotsb \otimes X_n$, $X_i \in S_0$, is defined by the first equality in \cref{terraform} with $X = X_1$ and $Y = X_2 \otimes \dotsb \otimes X_n$. The first equality in \cref{terraform} then holds for an arbitrary decomposition $X \otimes Y$ of an object; this follows by induction on the length of $X$, using the identities \cref{hexagon} and the braid relation \cref{Reid3}.

    We prove the twisted reflection equation for a pair of objects $W,Z$ by induction on the total length of $W$ and $Z$. If $W = \one$ or $Z = \one$ there is nothing to prove, and if $W, Z \in S_0$ it is the first relation in \cref{life}. Suppose first that $W = X \otimes Y$ with $X, Y \ne \one$. By induction, the twisted reflection equation holds for $X$ and $Y$, for $X$ and $Z$, and for $Y$ and $Z$. Since it holds for $X$ and $Y$, the dot on $W$ is also equal to the right-hand diagram in \cref{terraform}. Then
    \[
        \begin{tikzpicture}[centerzero]
            \draw (-0.25,0) \braidup (0.25,0.5) -- (0.25,0.8) \toplabel{Z};
            \draw[over] (-0.25,-0.8) \botlabel{W^\sigma} -- (-0.25,-0.5) \braidup (0.25,0) \braidup (-0.25,0.5) -- (-0.25,0.8) \toplabel{W};
            \draw[over] (0.25,-0.8) \botlabel{Z^\sigma} -- (0.25,-0.5) \braidup (-0.25,0);
            \posdot{-0.25,-0.5};
            \posdot{-0.25,0};
        \end{tikzpicture}
        =
        \begin{tikzpicture}[anchorbase]
            \draw (-0.5,0) \botlabel{X^\sigma} \braidup (0,0.5);
            \draw[over] (0,0) \botlabel{Y^\sigma} \braidup (-0.5,0.5) \braidup (0.4,1.5);
            \draw[over] (0,0.5) \braidup (-0.5,1) \braidup (0,1.5);
            \draw[over] (0.5,0) \botlabel{Z^\sigma} -- (0.5,0.8) \braidup (-0.5,1.5) \braidup (0.5,2.2) -- (0.5,2.3) \toplabel{Z};
            \draw[over] (0,1.5) \braidup (-0.5,2) -- (-0.5,2.3) \toplabel{X};
            \draw[over] (0.4,1.5) \braidup (0,2.2) -- (0,2.3) \toplabel{Y};
            \posdot{-0.5,0.5};
            \posdot{-0.5,1};
            \posdot{-0.5,1.5};
        \end{tikzpicture}
        =
        \begin{tikzpicture}[anchorbase]
            \draw (-0.5,0) \botlabel{X^\sigma} \braidup (0,0.5);
            \draw[over] (0,0) \botlabel{Y^\sigma} \braidup (-0.5,0.5) \braidup (0.4,1.5);
            \draw[over] (0,0.5) \braidup (-0.5,1) \braidup (0,1.5);
            \draw[over] (0.5,0) \botlabel{Z^\sigma} -- (0.5,0.8) \braidup (-0.5,1.5) \braidup (0.5,2.2) -- (0.5,2.3) \toplabel{Z};
            \draw[over] (0,1.5) \braidup (-0.5,2) -- (-0.5,2.3) \toplabel{X};
            \draw[over] (0.4,1.5) \braidup (0,2.2) -- (0,2.3) \toplabel{Y};
            \posdot{-0.5,0.5};
            \posdot{-0.5,2};
            \posdot{-0.5,1.5};
        \end{tikzpicture}
        =
        \begin{tikzpicture}[anchorbase]
            \draw (-0.5,0) \botlabel{X^\sigma} \braidup (0.4,1);
            \draw[over] (0,0) \botlabel{Y^\sigma} \braidup (-0.5,0.5) \braidup (0,1);
            \draw[over] (0.5,0) \botlabel{Z^\sigma} -- (0.5,0.3) \braidup (-0.5,1) \braidup (0.5,2) -- (0.5,2.3) \toplabel{Z};
            \draw[over] (0,1) \braidup (-0.5,1.5) \braidup (0,2.2) -- (0,2.3) \toplabel{Y};
            \draw[over] (0.4,1) \braidup (-0.5,2) -- (-0.5,2.3) \toplabel{X};
            \posdot{-0.5,0.5};
            \posdot{-0.5,1};
            \posdot{-0.5,2};
        \end{tikzpicture}
        =
        \begin{tikzpicture}[anchorbase]
            \draw (-0.5,0) \botlabel{X^\sigma} \braidup (0.4,1);
            \draw[over] (0,0) \botlabel{Y^\sigma} \braidup (-0.5,0.5) \braidup (0,1);
            \draw[over] (0.5,0) \botlabel{Z^\sigma} -- (0.5,0.3) \braidup (-0.5,1) \braidup (0.5,2) -- (0.5,2.3) \toplabel{Z};
            \draw[over] (0,1) \braidup (-0.5,1.5) \braidup (0,2.2) -- (0,2.3) \toplabel{Y};
            \draw[over] (0.4,1) \braidup (-0.5,2) -- (-0.5,2.3) \toplabel{X};
            \posdot{-0.5,1.5};
            \posdot{-0.5,1};
            \posdot{-0.5,2};
        \end{tikzpicture}
        =
        \begin{tikzpicture}[anchorbase]
            \draw (-0.5,0.5) \braidup (0.5,1.5) -- (0.5,2.3) \toplabel{Z};
            \draw[over] (-0.5,0) \botlabel{X^\sigma} \braidup (0,0.5) \braidup (-0.5,1);
            \draw[over] (0,0) \botlabel{Y^\sigma} \braidup (0.5,0.5) \braidup (0,1);
            \draw[over] (0.5,0) \botlabel{Z^\sigma} \braidup (-0.5,0.5);
            \draw[over] (-0.5,1.5) \braidup (0,2.2) -- (0,2.3) \toplabel{Y};
            \draw[over] (-0.5,1) \braidup (0,1.5) \braidup (-0.5,2) -- (-0.5,2.3) \toplabel{X};
            \draw[over] (0,1) \braidup (-0.5,1.5);
            \posdot{-0.5,1.5};
            \posdot{-0.5,0.5};
            \posdot{-0.5,2};
        \end{tikzpicture}
        =
        \begin{tikzpicture}[centerzero]
            \draw (-0.25,0) \braidup (0.25,0.5) -- (0.25,0.8) \toplabel{Z};
            \draw[over] (-0.25,-0.8) \botlabel{W^\sigma} -- (-0.25,-0.5) \braidup (0.25,0) \braidup (-0.25,0.5) -- (-0.25,0.8) \toplabel{W};
            \draw[over] (0.25,-0.8) \botlabel{Z^\sigma} -- (0.25,-0.5) \braidup (-0.25,0);
            \posdot{-0.25,0.5};
            \posdot{-0.25,0};
        \end{tikzpicture}
    \]
    where the first and last equalities use the right-hand diagram in \cref{terraform} for the dot on $W$ together with the identities \cref{hexagon}, the second equality uses the twisted reflection equation for $X$ and $Z$, the third and fifth equalities use the braid relation \cref{Reid3}, and the fourth equality uses the twisted reflection equation for $Y$ and $Z$. If instead the first object of the pair lies in $S_0$, we write the pair as $(Z,W)$ with $Z \in S_0$ and $W = X \otimes Y$, $X,Y \ne \one$. A similar argument, with the dot on $W$ now in the middle of the diagram and using the twisted reflection equations for $X$ and $Y$, for $Z$ and $X$, and for $Z$ and $Y$, all of which hold by induction, shows that
    \[
        \begin{tikzpicture}[centerzero]
            \draw (-0.25,0) \braidup (0.25,0.5) -- (0.25,0.8) \toplabel{W};
            \draw[over] (-0.25,-0.8) \botlabel{Z^\sigma} -- (-0.25,-0.5) \braidup (0.25,0) \braidup (-0.25,0.5) -- (-0.25,0.8) \toplabel{Z};
            \draw[over] (0.25,-0.8) \botlabel{W^\sigma} -- (0.25,-0.5) \braidup (-0.25,0);
            \posdot{-0.25,-0.5};
            \posdot{-0.25,0};
        \end{tikzpicture}
        =
        \begin{tikzpicture}[centerzero]
            \draw (-0.25,0) \braidup (0.25,0.5) -- (0.25,0.8) \toplabel{W};
            \draw[over] (-0.25,-0.8) \botlabel{Z^\sigma} -- (-0.25,-0.5) \braidup (0.25,0) \braidup (-0.25,0.5) -- (-0.25,0.8) \toplabel{Z};
            \draw[over] (0.25,-0.8) \botlabel{W^\sigma} -- (0.25,-0.5) \braidup (-0.25,0);
            \posdot{-0.25,0.5};
            \posdot{-0.25,0};
        \end{tikzpicture}
        \ .
    \]
    This completes the induction, and so the twisted reflection equation holds for all objects of $\cC$.
    
    It remains to consider morphisms.  Using the twisted reflection equation, the argument of \cite[Lem.~2.4]{SSSsub26} shows that the set of morphisms $f$ satisfying the last equation in \cref{life} is closed under tensor product.  This set also contains the identity morphisms and is closed under composition and under inverses of morphisms.  In the $\kk$-linear setting, it is additionally closed under linear combinations.  Thus it suffices to impose this relation for $f$ in the set $S_1$ of generating morphisms.  Then the argument of \cite[Lem.~2.5]{SSSsub26} shows that the last equation in \cref{life} follows from the reflection equation when $f$ is a braiding morphism.  Thus, it suffices to impose this condition for $f$ in $S_1'$.
\end{proof}

\begin{cor}[Module-category negative-dot presentation]\label{swamp-}
    As a right $\cC$-module, the twisted affinization $\Aff_\sigma(\cC)$ is generated by invertible morphisms
    \[
        \begin{tikzpicture}[centerzero]
            \draw (0,-0.3) \botlabel{X} -- (0,0.3) \toplabel{X^\sigma};
            \negdot{0,0};
        \end{tikzpicture}
        \colon X \to X^\sigma,\qquad
        X \in S_0,
    \]
    subject to the relations
    \begin{equation}\label{lifeneg}
        \begin{tikzpicture}[centerzero]
            \draw (0.25,-0.8) \botlabel{Y} -- (0.25,-0.5) \braidup (-0.25,0);
            \draw[over] (-0.25,-0.8) \botlabel{X} -- (-0.25,-0.5) \braidup (0.25,0) \braidup (-0.25,0.5) -- (-0.25,0.8) \toplabel{X^\sigma};
            \draw[over] (-0.25,0) \braidup (0.25,0.5) -- (0.25,0.8) \toplabel{Y^\sigma};
            \negdot{-0.25,-0.5};
            \negdot{-0.25,0};
        \end{tikzpicture}
        =
        \begin{tikzpicture}[centerzero]
            \draw (0.25,-0.8) \botlabel{Y} -- (0.25,-0.5) \braidup (-0.25,0);
            \draw[over] (-0.25,-0.8) \botlabel{X} -- (-0.25,-0.5) \braidup (0.25,0) \braidup (-0.25,0.5) -- (-0.25,0.8) \toplabel{X^\sigma};
            \draw[over] (-0.25,0) \braidup (0.25,0.5) -- (0.25,0.8) \toplabel{Y^\sigma};
            \negdot{-0.25,0.5};
            \negdot{-0.25,0};
        \end{tikzpicture}
        \ ,\qquad
        \begin{tikzpicture}[centerzero]
            \draw (0,-0.8) \botlabel{W} -- (0,0.8) \toplabel{Z^\sigma};
            \negdot{0,-0.3};
            \coupon{0,0.3}{f^\sigma};
        \end{tikzpicture}
        =
        \begin{tikzpicture}[centerzero]
            \draw (0,-0.8) \botlabel{W} -- (0,0.8) \toplabel{Z^\sigma};
            \negdot{0,0.3};
            \coupon{0,-0.3}{f};
        \end{tikzpicture}
        \ ,
    \end{equation}
    for all $X,Y \in S_0$ and $f \colon W \to Z $ in $S_1'$.   The negative dots on arbitrary objects are defined recursively by the first equality in \cref{griffin} with $X \in S_0$, and the negative dot on $\one$ is $1_\one$.
\end{cor}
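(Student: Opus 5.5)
The plan is to deduce \cref{swamp-} from \cref{swamp+} by inverting generators, in the same way that \cref{jungleneg} was obtained from \cref{jungle}. Let $\Mcat^-$ be the right $\cC$-module generated by invertible negative dots on $X \in S_0$ subject to \cref{lifeneg}, and let $\Mcat^+$ be the module of \cref{swamp+}. We will construct mutually inverse strict morphisms of $\cC$-modules $\Mcat^+ \leftrightarrows \Mcat^-$. Each sends a dot on a generating object to the inverse of the corresponding dot and fixes morphisms of $\cC$.

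First, we check that inverting the relations \cref{life} gives \cref{lifeneg}. For the naturality relation, take the second relation in \cref{life} for $f \colon W \to Z$ in $S_1'$. Precomposing with $\xi_W^{-1}$ and postcomposing with $\xi_Z^{-1}$ gives precisely the second relation in \cref{lifeneg}, and conversely. For the reflection equation, take the inverse of both sides of the first relation in \cref{life}. Inverting a composite reverses the order of the factors, so each crossing is replaced by its inverse and each positive dot by a negative dot. The resulting diagram is the vertical reflection of the original, with over- and under-crossings exchanged, and its labels are relabeled so that the bottom is $X \otimes Y$ and the top is $X^\sigma \otimes Y^\sigma$. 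One should check from the pictures that this is exactly the first relation in \cref{lifeneg}, read as an equality between the two inverses. Since the inverse of an equality of invertible morphisms is an equality, the two relations are equivalent in any $\cC$-module in which the dots are invertible.

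Second, we must match the recursive definitions of dots on arbitrary objects. In $\Mcat^+$, the positive dot on $X_1 \otimes \dotsb \otimes X_n$ is defined by the first equality of \cref{terraform}. Its inverse is the first equality of \cref{griffin}, by the same inversion argument applied to \cref{terraform} itself. So the inverse of the recursively defined positive dot coincides with the recursively defined negative dot. The same holds for the dot on $\one$, which is $1_\one$ in both presentations.

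Third, having verified that the two maps respect relations, we conclude that they are well defined, strict, and mutually inverse, because they agree with the identity on generators. Alternatively, composing with the isomorphism $\Mcat^+ \cong \Aff_\sigma(\cC)$ of \cref{swamp+} gives the result directly. The only step that requires care is the diagrammatic bookkeeping in the reflection equation, namely confirming that inversion produces exactly the picture in \cref{lifeneg}, with the correct crossing types and labels. This is a mechanical inspection, parallel to the proof of \cref{jungleneg}.
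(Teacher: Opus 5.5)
Your strategy matches the paper's, whose proof is the single sentence that \cref{swamp-} follows by inverting the generators and identities in \cref{swamp+}. Your handling of the relations on generating objects is also correct. The gap is your second step: inverting the first equality in \cref{terraform} does \emph{not} give the first equality in \cref{griffin}. The middle diagram in \cref{terraform} is $\beta_{X,Y}^{-1}\circ(\xi_Y\otimes 1_X)\circ\beta_{Y^\sigma,X}^{-1}\circ(\xi_X\otimes 1_{Y^\sigma})$. Its inverse, $(\xi_X^{-1}\otimes 1_{Y^\sigma})\circ\beta_{Y^\sigma,X}\circ(\xi_Y^{-1}\otimes 1_X)\circ\beta_{X,Y}$, is the \emph{right-hand} diagram in \cref{griffin}, with the dot on $X$ at the top. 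The recursion in \cref{swamp-} instead uses the middle diagram $\beta_{Y^\sigma,X^\sigma}\circ(\xi_Y^{-1}\otimes 1_{X^\sigma})\circ\beta_{X^\sigma,Y}\circ(\xi_X^{-1}\otimes 1_Y)$, with the dot on $X$ at the bottom; this is the inverse of the right-hand diagram in \cref{terraform}. Your own first step predicts this. Inversion turns a diagram upside down with the same strand on top, so each crossing becomes its inverse and the two sides of the reflection equation are exchanged.

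Hence the inverses of the recursively defined positive dots agree with the recursively defined negative dots precisely when the first relation in \cref{lifeneg} holds for pairs $(X,Y)$ with $X\in S_0$ and $Y$ arbitrary. In your $\mathcal{M}^-$ that relation is imposed only for $X,Y\in S_0$. This agreement is needed for $\mathcal{M}^+\to\mathcal{M}^-$ to be well defined. The reason is that the naturality relations in \cref{life} for $f\in S_1'$ with composite domain or codomain, such as the cups and caps in \cref{AKpresent}, involve the recursively defined dots. Your alternative via $\mathcal{M}^+\cong\Aff_\sigma(\cC)$ only yields the functor $\mathcal{M}^-\to\Aff_\sigma(\cC)$. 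That functor is well defined, since there \cref{jungleneg} gives both equalities in \cref{griffin} for all objects, but the argument does not construct its inverse.

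To close the gap, you can rerun the induction from the proof of \cref{swamp+} for negative dots inside $\mathcal{M}^-$. First establish the first equality in \cref{griffin} for arbitrary decompositions, then prove the reflection equation for all pairs by induction on total length. Alternatively, replace inversion by a symmetry that makes the deduction formal. Put $\beta'_{A,B}:=\beta_{B,A}^{-1}$, $X':=X^\sigma$, and $\eta_{X'}:=\xi_X^{-1}\colon\sigma^{-1}(X')\to X'$. Then $\sigma^{-1}$ is braided for $\beta'$, and \cref{griffin} becomes literally \cref{terraform} for $(\cC,\beta',\sigma^{-1})$. First equality matches first equality, second matches second, and naturality matches naturality. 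So the presentation in \cref{jungleneg} coincides with that of \cref{jungle} for this data. Applying \cref{swamp+} to $(\cC,\beta',\sigma^{-1})$ with generating sets $\sigma(S_0)$ and $\sigma(S_1)$ then gives exactly \cref{swamp-}. The braidings for $\beta'$ and their inverses are those for $\beta$, so the reduced generating set is $\sigma(S_1')$.
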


\begin{proof}
    This follows from inverting the generators and identities in \cref{swamp+}.
\end{proof}

\begin{rem}
    The presentations in \cref{swamp+,swamp-} can sometimes be simplified even further by omitting the dot generator on one object in a dual pair.  See \cref{AOSpresent,tAOSmodule} for an example of this phenomenon.
\end{rem}

\section{Braided module categories and cylinder twists} \label{S:braid}

In this section, we recall the definitions of braided module categories, cylinder twists, and tensor-pair cylinder twists.  We explain the connections between these structures and the different conventions used in the literature.  In \cref{S:affbraided} we will frame the concept of affinization in terms of these structures.

Throughout this section, we let $\cC$ be a braided strict monoidal category and let $\sigma \colon \cC \to \cC$ be a strict braided monoidal automorphism of $\cC$.

\subsection{Definitions}

\begin{defin}[Braided module category] \label{braidedmodcat}
    A strict $\cC$-module $\cM$ is \emph{$\sigma$-braided} if its module action $\otimes \colon \cM \times \cC \to \cM$ is equipped with a natural isomorphism $\eta \colon \otimes \Rightarrow \otimes \circ (\id_\cM \times \sigma)$ with components
    \begin{equation} \label{dubstep}
        \eta_{M,C}
        = 
        \begin{tikzpicture}[centerzero]
            \draw (0.25,-0.6) \botlabel{C} \braidup (-0.25,0);
            \draw[over] (-0.25,-0.6) \botlabel{M} \braidup (0.25,0) \braidup (-0.25,0.6) \toplabel{M};
            \draw[over] (-0.25,0) \braidup (0.25,0.6) \toplabel{C^\sigma};
        \end{tikzpicture}
        \colon M \otimes C \xrightarrow{\sim} M \otimes C^\sigma,\qquad
        M \in \Ob(\cM),\ C \in \Ob(\cC),
    \end{equation}
    called the \emph{$\sigma$-braid}, satisfying the \emph{$\sigma$-octagon axiom}
    \begin{equation} \label{octagon}
        \begin{tikzpicture}[centerzero]
            \draw (0.35,-0.8) \botlabel{D} \braidup (-0.35,0);
            \draw[over] (-0.35,-0.8) \botlabel{M \otimes C} \braidup (0.35,0) \braidup (-0.35,0.8) \toplabel{M \otimes C};
            \draw[over] (-0.35,0) \braidup (0.35,0.8) \toplabel{D^\sigma};
        \end{tikzpicture}
        =
        \begin{tikzpicture}[centerzero]
            \draw (0.5,-0.8) \botlabel{D} \braidup (-0.5,0);
            \draw[over] (0,-0.8) \botlabel{C} \braidup (0.5,0) \braidup (0,0.8) \toplabel{C};
            \draw[over] (-0.5,-0.8) \botlabel{M} \braidup (0,0) \braidup (-0.5,0.8) \toplabel{M};
            \draw[over] (-0.5,0) \braidup (0.5,0.8) \toplabel{D^\sigma};
        \end{tikzpicture}
        \ ,\qquad M \in \Ob(\cM),\quad C,D \in \Ob(\cC),
    \end{equation}
    and the \emph{$\sigma$-multiplicativity axiom}
    \begin{equation} \label{multaxiom}
        \begin{tikzpicture}[centerzero]
            \draw (0.35,-0.8) \botlabel{C \otimes D} \braidup (-0.35,0);
            \draw[over] (-0.35,-0.8) \botlabel{M} \braidup (0.35,0) \braidup (-0.35,0.8) \toplabelshift{-3pt}{M};
            \draw[over] (-0.35,0) \braidup (0.35,0.8) \toplabelshift{3pt}{(C \otimes D)^\sigma};
        \end{tikzpicture}
        =
        \begin{tikzpicture}[anchorbase]
            \draw (0,-0.8) \botlabel{C} \braidup (-0.5,-0.4);
            \draw[over] (0.5,-0.8) \botlabel{D} -- (0.5,0) \braidup (-0.5,0.6);
            \draw[over] (-0.5,-0.8) \botlabel{M} \braidup (0,-0.4) \braidup (-0.5,0);
            \draw[over] (-0.5,-0.4) \braidup (0,0) \braidup (0.5,0.6) \braidup (0,1.2) \toplabel{C^\sigma};
            \draw[over] (-0.5,0) \braidup (0,0.6) \braidup (-0.5,1.2) \toplabel{M};
            \draw[over] (-0.5,0.6) \braidup (0.5,1.2) \toplabel{D^\sigma};
        \end{tikzpicture}
        \left(
            =
            \begin{tikzpicture}[anchorbase]
                \draw (0.5,-0.8) \botlabel{D} \braidup (-0.5,-0.2);
                \draw[over] (0,-0.8) \botlabel{C} \braidup (0.5,-0.2) \braidup (0,0.4) \braidup (-0.5,0.8);
                \draw[over] (-0.5,-0.8) \botlabel{M} \braidup (0,-0.2) \braidup (-0.5,0.4);
                \draw[over] (-0.5,-0.2) \braidup (0.5,0.4) -- (0.5,1.2) \toplabel{D^\sigma};
                \draw[over] (-0.5,0.4) \braidup (0,0.8) \braidup (-0.5,1.2) \toplabel{M};
                \draw[over] (-0.5,0.8) \braidup (0,1.2) \toplabel{C^\sigma};
            \end{tikzpicture}
        \right)
        \ ,\qquad M \in \Ob(\cM),\quad C,D \in \Ob(\cC),
    \end{equation}
    where the equality in parentheses holds because of naturality of the $\sigma$-braid.
\end{defin}

\begin{rem}[Terminology and conventions]
    \begin{enumerate}
        \item The term \emph{octagon} in the name of relation \cref{octagon} refers to the eight arrows in the coherence diagram for a non-strict module category: four module associators (or their inverses), two braidings in $\cC$, and two components of $\eta$; see \cite[(1.1)]{DNTY19}.  In our strict setting, the associators are identities and are omitted, but the name is retained.

        \item The diagram for $\eta_{M,C}$ represents a single morphism.  A priori, it is \emph{not} a composition of two braid-like morphisms.  Indeed, such a composition would factor through $C \otimes M$ or $C^\sigma \otimes M$, which is not even defined in general, since $\cM$ is a \emph{right} $\cC$-module.

        \item When $\sigma = \id_\cC$, we omit $\sigma$ from the terminology; our definition then agrees with \cite[\S5.1]{Bro13} and \cite[Def.~2.1]{Kol20}.  Brochier's \emph{reflection relation} \cite[(5-4) and proof of Thm.~5.3]{Bro13} follows from the octagon axiom \cref{octagon} and naturality alone; the same argument gives its $\sigma$-twisted analogue.  The \emph{$\sigma$-multiplicativity axiom} \cref{multaxiom} specifies how $\eta_{M,C\otimes D}$ is determined by the components for $C$ and $D$.  For $\sigma = \id_\cC$, it is equivalent, using the octagon axiom, to Brochier's second defining condition \cite[(5-3)]{Bro13}.

        \item Writing $\tau$ for the autoequivalence in \cite[Def.~1.4 and Rem.~1.5(1)]{DNTY19}, their \emph{$\tau$-braid} is a natural isomorphism $M \otimes \tau(C) \to M \otimes C$ satisfying the $\tau$-octagon equation; it is a \emph{ribbon $\tau$-braid} if it also satisfies their \emph{ribbon $\tau$-twist equation}.  Our $\eta$ corresponds to a ribbon $\tau$-braid with $\tau = \sigma^{-1}$ and components $\widetilde\eta_{M,C} := \eta_{M,\sigma^{-1}(C)}$.  Under this identification, their ribbon twist equation is equivalent to our multiplicativity axiom \cref{multaxiom} using \cref{octagon}.

        \item For $\sigma = \id_\cC$, the definition of a \emph{braided module category} in \cite[Def.~3.4 and Rem.~3.6]{BBJ18} uses the annulus framing instead of the cylinder framing underlying our axioms.  If $\cC$ has a balancing $\theta$, their module braiding $E$ corresponds to ours by $\eta_{M,C} = E_{M,C}^{-1} \circ (1_M \otimes \theta_C^{-1})$.  Laugwitz--Walton--Yakimov use left module categories and a different multiplicativity axiom.  When $\cC$ is ribbon, their module braiding can be converted to the left-module version of our convention by first applying the ribbon twist to the $\cC$-factor and then applying their module braiding; see \cite[Rem.~3.15(a)]{LWY25}.
    \end{enumerate}
\end{rem}

\begin{defin}[Cylinder $\sigma$-twist] \label{cylindertwist}
    A \emph{cylinder $\sigma$-twist} on $\cC$ is a natural isomorphism $\theta \colon \id_\cC \Rightarrow \sigma$ with components
    \begin{equation}
        \theta_C
        =
        \begin{tikzpicture}[centerzero]
            \draw (0,-0.4) \botlabel{C} -- (0,0.4) \toplabel{C^\sigma};
            \coupon{0,0}{\theta};
        \end{tikzpicture}
        \colon C \to C^\sigma,\qquad C \in \Ob(\cC),
    \end{equation}
    satisfying the first equality in
    \begin{equation} \label{twist}
        \begin{tikzpicture}[centerzero,scale=1.5]
            \draw (0,-0.6) \botlabel{C \otimes D} -- (0,0.6) \toplabel{(C \otimes D)^\sigma};
            \coupon{0,0}{\theta};
        \end{tikzpicture}
        =
        \begin{tikzpicture}[centerzero,scale=1.5]
            \draw (0.2,-0.6) \botlabel{D} -- (0.2,-0.4) \braidup (-0.2,0);
            \draw[over] (-0.2,-0.6) \botlabel{C} -- (-0.2,-0.4) \braidup (0.2,0);
            \draw (0.2,0) \braidup (-0.2,0.4) -- (-0.2,0.6) \toplabel{C^\sigma};
            \draw[over] (-0.2,0) \braidup (0.2,0.4) -- (0.2,0.6) \toplabel{D^\sigma};
            \coupon{-0.2,-0.4}{\theta};
            \coupon{-0.2,0}{\theta};
        \end{tikzpicture}
        \left(
            =
            \begin{tikzpicture}[centerzero,scale=1.5]
                \draw (0.2,-0.6) \botlabel{D} -- (0.2,-0.4) \braidup (-0.2,0);
                \draw[over] (-0.2,-0.6) \botlabel{C} -- (-0.2,-0.4) \braidup (0.2,0);
                \draw (0.2,0) \braidup (-0.2,0.4) -- (-0.2,0.6) \toplabel{C^\sigma};
                \draw[over] (-0.2,0) \braidup (0.2,0.4) -- (0.2,0.6) \toplabel{D^\sigma};
                \coupon{-0.2,0.4}{\theta};
                \coupon{-0.2,0}{\theta};
            \end{tikzpicture}
        \right)
        \qquad \text{for all } C,D \in \Ob(\cC).
    \end{equation}
    Here naturality means that, for morphisms $f \colon C \to D$ in $\cC$,
    \begin{equation}\label{ferns}
        \begin{tikzpicture}[centerzero]
            \draw (0,-0.8) \botlabel{C} -- (0,0.8) \toplabel{D^\sigma};
            \coupon{0,-0.3}{f};
            \coupon{0,0.3}{\theta};
        \end{tikzpicture}
        =
        \begin{tikzpicture}[centerzero]
            \draw (0,-0.8) \botlabel{C} -- (0,0.8) \toplabel{D^\sigma};
            \coupon{0,-0.3}{\theta};
            \coupon{0,0.3}{f^\sigma};
        \end{tikzpicture}
        \ .
    \end{equation}
    The equality in parentheses in \cref{twist} follows by applying \cref{ferns} to the braiding and using the first equality with $C$ and $D$ interchanged.
\end{defin}

\begin{rem} \label{cylindertwistcorrespondence}
    A cylinder $\sigma$-twist on $\cC$ is equivalent to a $\sigma$-braided structure on $\cC$, regarded as a right $\cC$-module via its tensor product.  Given such a $\sigma$-braid $\eta$, define
    \[
        \theta_C := \eta_{\one,C},\qquad C \in \Ob(\cC).
    \]
    Then the $\sigma$-octagon condition \cref{octagon} with $M=\one$ determines $\eta$ from $\theta$:
    \begin{equation} \label{thunder}
        \eta_{C,D}
        =
        \begin{tikzpicture}[centerzero]
            \draw (0.2,-0.6) \botlabel{D} \braidup (-0.2,0);
            \draw[over] (-0.2,-0.6) \botlabel{C} \braidup (0.2,0) \braidup (-0.2,0.6) \toplabel{C};
            \draw[over] (-0.2,0) \braidup (0.2,0.6) \toplabel{D^\sigma};
            \coupon{-0.2,0}{\theta};
        \end{tikzpicture}
        .
    \end{equation}
    The multiplicativity axiom \cref{multaxiom} with $M = \one$ becomes \cref{twist}.  Conversely, given a cylinder $\sigma$-twist $\theta$ on $\cC$, \cref{thunder} defines a natural isomorphism in both variables; \cref{hexagon} and the cylinder $\sigma$-twist equation imply the octagon axiom \cref{octagon} and the multiplicativity axiom \cref{multaxiom}.  This is the regular right module case of \cite[Prop.~D.2]{DNTY19}, after translating their conventions to ours.  The two constructions are mutually inverse.  When $\sigma = \id_\cC$, this is a \emph{twist} on $\cC$, endowing $\cC$ with the structure of a balanced monoidal category.
\end{rem}

\subsection{Tensor-pair cylinder twists}

We now discuss the concept of a tensor-pair cylinder twist, which appears in the literature on quantum symmetric pairs.  Unlike the situation of \cref{cylindertwistcorrespondence}, tensor-pair cylinder twists are \emph{not} special cases of $\sigma$-braided structures; see \cref{bottle}.

Throughout this subsection, we assume that $\cM$ is a strict $\cC$-module satisfying the following conditions:
\begin{itemize}
    \item $\cC$ is a subcategory of $\cM$ with $\Ob(\cC) = \Ob(\cM)$,
    \item when restricted to $\cC \times \cC$, the module category action is given by the tensor product of $\cC$.
\end{itemize}
In this case, one says that $(\cM,\cC)$ is a \emph{strict tensor pair}; see \cite[\S4.1]{BK19}.  Let $i \colon \cC \hookrightarrow \cM$ denote the inclusion.  

\begin{defin}[Tensor-pair cylinder twist]\label{TPcyltwist}
    A \emph{tensor-pair cylinder $\sigma$-twist for $(\cM,\cC)$} is a natural isomorphism $\tau \colon i \Rightarrow i \circ \sigma$ satisfying the first equality in
    \begin{equation} \label{taudub}
        \begin{tikzpicture}[centerzero,scale=1.5]
            \draw (0,-0.6) \botlabel{X \otimes Y} -- (0,0.6) \toplabel{(X \otimes Y)^\sigma};
            \coupon{0,0}{\tau};
        \end{tikzpicture}
        =
        \begin{tikzpicture}[centerzero,scale=1.5]
            \draw (0.2,-0.6) \botlabel{Y} -- (0.2,-0.4) \braidup (-0.2,0);
            \draw[over] (-0.2,-0.6) \botlabel{X} -- (-0.2,-0.4) \braidup (0.2,0);
            \draw (0.2,0) \braidup (-0.2,0.4) -- (-0.2,0.6) \toplabel{X^\sigma};
            \draw[over] (-0.2,0) \braidup (0.2,0.4) -- (0.2,0.6) \toplabel{Y^\sigma};
            \coupon{-0.2,-0.4}{\tau};
            \coupon{-0.2,0}{\tau};
        \end{tikzpicture}
        \left(
            =
            \begin{tikzpicture}[centerzero,scale=1.5]
                \draw (0.2,-0.6) \botlabel{Y} -- (0.2,-0.4) \braidup (-0.2,0);
                \draw[over] (-0.2,-0.6) \botlabel{X} -- (-0.2,-0.4) \braidup (0.2,0) \braidup (-0.2,0.4) -- (-0.2,0.6) \toplabel{X^\sigma};
                \draw[over] (-0.2,0) \braidup (0.2,0.4) -- (0.2,0.6) \toplabel{Y^\sigma};
                \coupon{-0.2,0.4}{\tau};
                \coupon{-0.2,0}{\tau};
            \end{tikzpicture}
        \right)
        \qquad X,Y \in \Ob(\cC).
    \end{equation}
    Here naturality requires only
    \begin{equation} \label{taubasenaturality}
        \begin{tikzpicture}[centerzero]
            \draw (0,-0.8) \botlabel{X} -- (0,0.8) \toplabel{Y^\sigma};
            \coupon{0,-0.3}{f};
            \coupon{0,0.3}{\tau};
        \end{tikzpicture}
        =
        \begin{tikzpicture}[centerzero]
            \draw (0,-0.8) \botlabel{X} -- (0,0.8) \toplabel{Y^\sigma};
            \coupon{0,-0.3}{\tau};
            \coupon{0,0.3}{f^\sigma};
        \end{tikzpicture}
        \ ,\qquad f \in \cC(X,Y),
    \end{equation}
    and does not impose a condition on the additional morphisms of $\cM$.  The equality in parentheses in \cref{taudub} follows by applying \cref{taubasenaturality} to the braiding and using the first equality with $X$ and $Y$ interchanged.
\end{defin}

\begin{rem}
    \begin{enumerate}
        \item \Cref{TPcyltwist} agrees with \cite[Def.~4.6 and (4.6)]{BK19}, where the equality between $\tau_{X \otimes Y}$ and the parenthesized expression in \cref{taudub} is used as the tensor-product compatibility axiom.  When $\sigma^2=\id_\cC$, it also agrees with \cite[Def.~2.1]{SSSsub26}, with $\natural=\sigma$.  Taking $X=Y=\one$ shows that $\tau_\one=1_\one$.
    
        \item For $\sigma=\id_\cC$, a strict tensor pair $(\cM,\cC)$ equipped with a tensor-pair cylinder twist $\tau$ is the strict form of the cylinder-braiding structure described in \cite[\S4]{tDH98}, with tensor-pair cylinder twist identities as in \cite[Prop.~1.3]{tDH98}.  Their ``weak endomorphism of the identity functor'' is, in our notation, a natural automorphism of the inclusion $i\colon\cC\hookrightarrow\cM$.
    \end{enumerate}
\end{rem}

We now relate the concept of a tensor-pair cylinder $\sigma$-twist to that of a $\sigma$-braid.  For a tensor-pair cylinder $\sigma$-twist $\tau$, define
\begin{equation} \label{taureconstruct}
    E^\tau_{X,Y}
    :=
    \begin{tikzpicture}[centerzero]
        \draw (0.3,-0.6) \botlabel{Y} \braidup (-0.3,0);
        \draw[over] (-0.3,-0.6) \botlabel{X} \braidup (0.3,0) \braidup (-0.3,0.6) \toplabel{X};
        \draw[over] (-0.3,0) \braidup (0.3,0.6) \toplabel{Y^\sigma};
        \coupon{-0.3,0}{\tau};
    \end{tikzpicture}
    \colon X \otimes Y \xrightarrow{\sim} X \otimes Y^\sigma,\qquad X,Y \in \Ob(\cC).
\end{equation}
The crossings in \cref{taureconstruct} are braidings in $\cC$, and the coupon denotes $\tau_Y$ tensored on the right with $1_X$; thus this is a well-defined composite in $\cM$.

\begin{prop} \label{tensorbraidcorrespondence}
    The assignment $\eta \mapsto (\eta_{\one,X})_{X \in \Ob(\cC)}$ is a bijection between $\sigma$-braided structures on the right $\cC$-module $\cM$ and tensor-pair cylinder $\sigma$-twists $\tau$ for $(\cM,\cC)$ satisfying
    \begin{equation} \label{taunaturality}
        \begin{tikzpicture}[centerzero]
            \draw (0.3,-1) \botlabel{Z} -- (0.3,-0.6) \braidup (-0.3,0);
            \draw[over] (-0.3,-1) \botlabel{X} -- (-0.3,-0.6) \braidup (0.3,0) \braidup (-0.3,0.6) -- (-0.3,1) \toplabel{Y};
            \draw[over] (-0.3,0) \braidup (0.3,0.6) -- (0.3,1) \toplabel{Z^\sigma};
            \coupon{-0.3,0}{\tau};
            \coupon{-0.3,-0.7}{a};
        \end{tikzpicture}
        =
        \begin{tikzpicture}[centerzero]
            \draw (0.3,-1) \botlabel{Z} -- (0.3,-0.6) \braidup (-0.3,0);
            \draw[over] (-0.3,-1) \botlabel{X} -- (-0.3,-0.6) \braidup (0.3,0) \braidup (-0.3,0.6) -- (-0.3,1) \toplabel{Y};
            \draw[over] (-0.3,0) \braidup (0.3,0.6) -- (0.3,1) \toplabel{Z^\sigma};
            \coupon{-0.3,0}{\tau};
            \coupon{-0.3,0.7}{a};
        \end{tikzpicture}
        \ ,\qquad a \in \cM(X,Y),\quad Z \in \Ob(\cC).
    \end{equation}
    The inverse sends $\tau$ to $E^\tau$.
\end{prop}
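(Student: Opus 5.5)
I would prove \cref{tensorbraidcorrespondence} by checking four things in turn: the forward map lands in the right set, the reverse map $\tau\mapsto E^\tau$ lands in $\sigma$-braided structures, and the two maps are mutually inverse.

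\textbf{Forward direction.} Let $\eta$ be a $\sigma$-braided structure on $\cM$, and set $\tau_X:=\eta_{\one,X}$.
\begin{enumerate}
    \item Naturality of $\eta$ in the $\cC$-variable, restricted to morphisms of $\cC\subseteq\cM$ with $M=\one$, gives \cref{taubasenaturality}.
    \item The multiplicativity axiom \cref{multaxiom} at $M=\one$ gives \cref{taudub}. Here I use that, on $\cC\times\cC$, the action is the tensor product of $\cC$, so $\eta_{\one,C}\otimes 1_D$ is the coupon $\tau_C$ with a strand beside it.
    \item The octagon axiom \cref{octagon} at $M=\one$, with $C=X$ and $D=Y$, gives $\eta_{X,Y}=E^\tau_{X,Y}$. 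This is the same computation as \cref{thunder} in \cref{cylindertwistcorrespondence}.
    \item Naturality of $\eta$ in the $\cM$-variable, applied to $a\in\cM(X,Y)$, reads $\eta_{Y,Z}\circ(a\otimes 1_Z)=(a\otimes 1_{Z^\sigma})\circ\eta_{X,Z}$. Rewriting both $\eta$'s as $E^\tau$ gives exactly \cref{taunaturality}.
\end{enumerate}
Step 3 also proves injectivity of $\eta\mapsto\tau$: every object of $\cM$ is an object of $\cC$, so $\eta$ is entirely recovered as $E^\tau$.

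\textbf{Reverse direction.} Now let $\tau$ satisfy \cref{taudub}, \cref{taubasenaturality} and \cref{taunaturality}, and define $\eta:=E^\tau$.
\begin{enumerate}
    \item Naturality in the $\cM$-variable is precisely \cref{taunaturality}.
    \item Naturality in the $\cC$-variable, for $g\in\cC(Z,Z')$, follows by sliding $g$ through the crossings using \cref{lynx}, and through the coupon using \cref{taubasenaturality}.
    \item Invertibility is clear, since $E^\tau$ is a composite of isomorphisms.
    \item For the octagon axiom \cref{octagon}, expand $E^\tau_{X\otimes C,D}$. The strand $D$ passes under $X\otimes C$ and back over it, and \cref{hexagon} splits each of these crossings into crossings with $X$ and with $C$. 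Then \cref{Reid2,Reid3} isotope the diagram into the form $\eta_{X\otimes C,D}$ written as the two-step right-hand side of \cref{octagon}, with $\eta_{X,D}$ expressed as $E^\tau_{X,D}$. This is purely braid manipulation in $\cC$, plus moving the single coupon $\tau_D\otimes 1$ through crossings on strands not attached to it.
    \item For the multiplicativity axiom \cref{multaxiom}, expand $\tau_{C\otimes D}$ inside $E^\tau_{X,C\otimes D}$ using \cref{taudub}. This leaves the two coupons $\tau_C$ and $\tau_D$ conjugated by braidings, and \cref{hexagon} splits the braidings of $X$ with $C\otimes D$. The identity then reduces to braid relations, with the coupons sliding past crossings by \cref{lynx}; this uses that a coupon $\tau$ tensored with identities is a morphism in $\cM$ built from $\cC$-data in the ambient tensor pair.
\end{enumerate}
Finally, $E^\tau_{\one,X}=\tau_X$, because all crossings with $\one$ are identities. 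Hence the two maps are mutually inverse.

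\textbf{Main obstacle.} The hardest step is the octagon and multiplicativity verification in the reverse direction, in particular bookkeeping which crossings are legitimate. The coupon $\tau$ lives in $\cM$, not in $\cC$, so it may not be slid through crossings using naturality of the braiding of $\cC$ in the $\cM$-strand. I must check that every slide moves either morphisms of $\cC$, or a coupon along a strand that is itself in $\cC$, using \cref{taubasenaturality} or \cref{lynx} only for morphisms in $\cC$. Where that is not possible, I expect \cref{taunaturality} to be what is needed; this is exactly why that extra hypothesis appears. I would first verify the octagon identity for $X=\one$, where it reduces to \cref{cylindertwistcorrespondence}. I would then treat general $X$ by viewing the left strand $X$ as inert: it never carries a coupon, and only crossings of $\cC$ act on it.
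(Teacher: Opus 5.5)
Your proposal is correct, and its architecture matches the paper's proof.

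In the forward direction you read off \cref{taubasenaturality}, \cref{taudub}, $\eta=E^\tau$ and \cref{taunaturality} from the $\sigma$-braid axioms at $M=\one$. In the reverse direction you check the axioms for $E^\tau$. The octagon there follows directly from \cref{hexagon}; no isotopy via \cref{Reid2,Reid3} is needed.

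The real difference is the multiplicativity axiom.

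\textbf{The paper's route.} It avoids braid bookkeeping entirely. It expands $\tau_{X\otimes Y\otimes Z}$ by \cref{taudub} along both parenthesizations, obtaining
\[
    E^\tau_{X^\sigma,Y\otimes Z}\circ(\tau_X\otimes 1_{Y\otimes Z})
    \quad\text{and}\quad
    E^\tau_{X^\sigma\otimes Y^\sigma,Z}\circ(E^\tau_{X^\sigma,Y}\otimes 1_Z)\circ(\tau_X\otimes 1_{Y\otimes Z}).
\]
It then cancels the invertible $\tau_X\otimes 1_{Y\otimes Z}$ and relabels $X^\sigma$ as $X$, using that $\sigma$ is bijective on objects.

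\textbf{Your route.} Expanding $\tau_{C\otimes D}$ directly inside $E^\tau_{X,C\otimes D}$ also works, but not via \cref{lynx}. The components $\tau_C,\tau_D$ are morphisms of $\cM$, and $\cM$ has no left action. So a $\tau$-coupon can never pass through a crossing on its own strand: the would-be result $1_X\otimes\tau_Y$ is not even defined. Your remark about ``$\cC$-data'' does not repair this. What you need, and all you need, is the interchange law of the right action, which you already invoked in your octagon step:
\[
    (1_{C^\sigma}\otimes\beta_{X,D})\circ(\tau_C\otimes 1_{X\otimes D})
    =(\tau_C\otimes 1_{D\otimes X})\circ(1_C\otimes\beta_{X,D}),
\]
together with the analogous identity moving $1_D\otimes\beta_{C^\sigma,X}$ past $\tau_D$. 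Everything else is \cref{hexagon}, naturality of $\beta$ for morphisms of $\cC$, and one application of \cref{taudub}.

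\textbf{Comparison.} Your computation is more hands-on and needs no relabeling. The paper's argument is shorter and makes clear that multiplicativity for general $M$ is forced by the case $M=\one$ together with associativity.

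\textbf{Role of \cref{taunaturality}.} You expect it might be needed for the octagon or multiplicativity, and suggest this is why the hypothesis appears. That is mistaken. Both axioms hold for $E^\tau$ for every tensor-pair cylinder $\sigma$-twist. Condition \cref{taunaturality} is used only for naturality of $E^\tau$ in the $\cM$-variable, exactly as in your reverse step 1.

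This distinction matters later in the paper. \cref{taugenerated,affcylindertwists} use the octagon and multiplicativity of $E^\tau$ without assuming \cref{taunaturality}. In \cref{taunonnatural}, these two axioms hold for $E^\tau$ but naturality fails.
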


\begin{proof}
    For a $\sigma$-braid $\eta$, the $\sigma$-octagon axiom \cref{octagon} at $M=\one$ gives $\eta_{X,Y}=E^\tau_{X,Y}$, where $\tau_X=\eta_{\one,X}$.  The preceding discussion shows that $\tau$ is a tensor-pair cylinder $\sigma$-twist, and naturality of $\eta$ in its first argument gives \cref{taunaturality}.

    Conversely, let $\tau$ be a tensor-pair cylinder $\sigma$-twist.  Substituting \cref{taureconstruct} in \cref{octagon} and applying the identities \cref{hexagon} proves the octagon axiom \cref{octagon} for $E^\tau$.  Applying the first equality in \cref{taudub} to $\tau_{X \otimes Y \otimes Z}$ using the two parenthesizations of the triple tensor product $X \otimes Y \otimes Z$, and cancelling $\tau_X$ on the input strand, gives the equality below after relabeling $X^\sigma$ as $X$:
    \begin{equation} \label{taumultiplicativity}
        \begin{tikzpicture}[anchorbase]
            \draw (0.35,-0.8) \botlabel{Y \otimes Z} \braidup (-0.35,0);
            \draw[over] (-0.35,-0.8) \botlabel{X} \braidup (0.35,0) \braidup (-0.35,0.8) \toplabelshift{-3pt}{X};
            \draw[over] (-0.35,0) \braidup (0.35,0.8) \toplabelshift{3pt}{(Y \otimes Z)^\sigma};
            \coupon{-0.35,0}{\tau};
        \end{tikzpicture}
        =
        \begin{tikzpicture}[anchorbase]
            \draw (0,-0.8) \botlabel{Y} \braidup (-0.5,-0.4);
            \draw[over] (0.5,-0.8) \botlabel{Z} -- (0.5,0) \braidup (-0.5,0.6);
            \draw[over] (-0.5,-0.8) \botlabel{X} \braidup (0,-0.4) \braidup (-0.5,0);
            \draw[over] (-0.5,-0.4) \braidup (0,0) \braidup (0.5,0.6) \braidup (0,1.2) \toplabel{Y^\sigma};
            \draw[over] (-0.5,0) \braidup (0,0.6) \braidup (-0.5,1.2) \toplabel{X};
            \draw[over] (-0.5,0.6) \braidup (0.5,1.2) \toplabel{Z^\sigma};
            \coupon{-0.5,-0.4}{\tau};
            \coupon{-0.5,0.6}{\tau};
        \end{tikzpicture}
        .
    \end{equation}
    This is precisely the multiplicativity axiom \cref{multaxiom} after substituting \cref{taureconstruct}.  Naturality of $\tau$ and the braiding gives naturality of $E^\tau$ in its second argument, and in its first argument for morphisms in $\cC$.  Condition \cref{taunaturality} supplies naturality for all morphisms in $\cM$.  Hence $E^\tau$ is a $\sigma$-braid.  Finally, $E^\tau_{\one,X}=\tau_X$, so the assignments are mutually inverse.
\end{proof}

The octagon and multiplicativity axioms for $E^\tau$, established in the proof of \cref{tensorbraidcorrespondence}, also follow by specializing \cite[Prop.~D.2]{DNTY19}.  More precisely, take their module category to be $\cM$, their distinguished object to be $\one$, their twisting autoequivalence to be $\sigma^{-1}$, and their tensor-pair cylinder twist to have components $\theta_V=\tau_{\sigma^{-1}(V)}$.  Under this identification, their ribbon twist equation is equivalent to our multiplicativity axiom \cref{multaxiom}, using the octagon axiom \cref{octagon}.  As noted immediately after that proposition, naturality of $E^\tau$ in its first argument with respect to all morphisms of $\cM$ must be imposed in addition to the tensor-pair cylinder $\sigma$-twist axioms; in our setting, this is condition \cref{taunaturality}.  The following restriction of morphisms is the corresponding case of the construction in \cite[App.~D, pp.~765--767]{DNTY19}.

In \cite{SSSsub26}, the authors developed a theory of classifying submodules over monoidal categories.  One of the key assumptions on tensor pairs there was \cite[Ass.~3.1(c)]{SSSsub26}, which states that $\cM$ is generated as a $\cC$-module by the components of the tensor-pair cylinder $\sigma$-twist.  The following corollary explains the significance of this assumption.

\begin{cor} \label{taugenerated}
    Let $\tau$ be a tensor-pair cylinder $\sigma$-twist for $(\cM,\cC)$.  Let $\cM^\tau$ have the same objects as $\cM$, with $\cM^\tau(X,Y)$ consisting of the morphisms $a \in \cM(X,Y)$ satisfying \cref{taunaturality} for every $Z \in \Ob(\cC)$.  Then $\cM^\tau$ is a subcategory of $\cM$ containing $\cC$, closed under the right $\cC$-action, and equipped with the $\sigma$-braid $E^\tau$.

    In particular, if $\cM$ is generated under composition and the right $\cC$-action by morphisms of $\cC$ and the components $\tau_X$ and $\tau_X^{-1}$, then $\tau$ determines a unique $\sigma$-braided structure on $\cM$.  In the $\kk$-linear setting, $\cM^\tau$ is a $\kk$-linear subcategory, and linear combinations may also be used in the generation hypothesis.
\end{cor}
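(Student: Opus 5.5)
The plan is to verify directly that $\cM^\tau$ is a subcategory containing $\cC$, stable under the right action, and that $E^\tau$ becomes natural on it. Then \cref{tensorbraidcorrespondence}, applied with $\cM^\tau$ in place of $\cM$, gives the $\sigma$-braid. First, $(\cM^\tau,\cC)$ is again a strict tensor pair once $\cC\subseteq\cM^\tau$, and $\tau$ is still a tensor-pair cylinder $\sigma$-twist for it, because the axioms \cref{taudub,taubasenaturality} only involve morphisms of $\cC$ and the components $\tau_X$.

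The key closure steps, in order, are as follows.

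(i) Identities satisfy \cref{taunaturality} trivially. If $a$ and $b$ satisfy it, so does $b\circ a$: slide $a$ up through $E^\tau_{X,Z}$ to get $(a\otimes 1_{Z^\sigma})\circ E^\tau$, then do the same with $b$.

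(ii) Morphisms $f$ of $\cC$ satisfy it. Slide $f$ through the lower crossing by \cref{lynx}, through $\tau$ using \cref{taubasenaturality}, and through the upper crossing by \cref{lynx} again.

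(iii) Closure under the right action $a\mapsto a\otimes 1_W$, $W\in\Ob(\cC)$. Here I expect the main work. Write the condition for $a\otimes 1_W$ against $Z$. Using \cref{hexagon}, the crossing of $X\otimes W$ with $Z$ splits into crossings with $W$ and with $X$. The goal is to rewrite $E^\tau_{X\otimes W,Z}$ as a composite of braidings in $\cC$ and $E^\tau_{X,Z'}$ for suitable $Z'$; this is exactly the octagon axiom \cref{octagon} for $E^\tau$, which was proven in \cref{tensorbraidcorrespondence} using only \cref{hexagon}. The $W$-strand just braids past, so $a\otimes 1_W$ commutes with those crossings by the interchange law \cref{deer}. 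Concretely, $E^\tau_{X\otimes W,Z}=(1_X\otimes\beta^{\pm})\circ(E^\tau_{X,Z}\otimes 1_W)\circ(1_X\otimes\beta^{\pm})$. Now $a$ passes through the outer factors by interchange and through the middle factor by hypothesis. For general $g\otimes$-actions, combine this with (i) and (ii), since $a\otimes g=(1\otimes g)\circ(a\otimes 1)$. Inverses of invertible elements of $\cM^\tau$ also lie in $\cM^\tau$, by conjugating the relation. In the $\kk$-linear case the condition is linear in $a$, so $\cM^\tau$ is a linear subcategory.

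(iv) Each $\tau_X$ and $\tau_X^{-1}$ lies in $\cM^\tau$ for any $Z$. Read the two sides of \cref{taudub} (the first equality and its parenthesized form) with $Y=Z$. Placing $\tau_X$ below $E^\tau_{X,Z}$ or above $E^\tau_{X^\sigma,Z}$ yields the same morphism $\tau_{X\otimes Z}$ composed with braidings. Matching the source and target labels requires using $E^\tau_{X,Z}$ on the left and $E^\tau_{X^\sigma,Z}$ on the right, which is exactly \cref{taunaturality} for $a=\tau_X$. The inverse then follows from (iii).

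With these steps, \cref{tensorbraidcorrespondence} endows $\cM^\tau$ with the $\sigma$-braid $E^\tau$. For the final assertion: the generation hypothesis and (i)–(iv) force $\cM^\tau=\cM$. Uniqueness holds because any $\sigma$-braid $\eta$ with $\eta_{\one,X}=\tau_X$ equals $E^\tau$ by the bijection of \cref{tensorbraidcorrespondence}.
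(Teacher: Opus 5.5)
Your proposal is correct and follows the paper's proof essentially step for step. Both arguments check closure under composition, inverses and linear combinations; membership of $\cC$ via naturality of the braiding; closure under the right action via the octagon identity for $E^\tau$; membership of $\tau_X$ because the two expressions in \cref{taudub} are the two sides of \cref{taunaturality} for $a=\tau_X$; and then apply \cref{tensorbraidcorrespondence} to $\cM^\tau$.

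Two small slips to fix. In step (ii), the coupon $f$ gets past $\tau_Z$ by the interchange law, since the two sit on different strands, not by \cref{taubasenaturality}. In step (iv), the composites should read $(\tau_X\otimes 1_{Z^\sigma})\circ E^\tau_{X,Z}$ and $E^\tau_{X^\sigma,Z}\circ(\tau_X\otimes 1_Z)$, so you have "above" and "below" swapped.
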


\begin{proof}
    Condition \cref{taunaturality} is preserved under composition and inversion of invertible morphisms.  In the $\kk$-linear setting, it is also preserved under linear combinations.  It holds for morphisms of $\cC$ by naturality of the braiding, and the octagon identity \cref{octagon} shows that it is preserved by tensoring on the right with an object of $\cC$.  Thus $\cM^\tau$ is a subcategory closed under the right action.  The equality of the two right-hand expressions in \cref{taudub} is exactly \cref{taunaturality} for $a=\tau_X$.  Consequently, $\cM^\tau$ contains all twist components and their inverses, and hence also the composites \cref{taureconstruct} and their inverses.  \Cref{tensorbraidcorrespondence} now gives the braided structure on $\cM^\tau$, and the generation hypothesis implies $\cM^\tau=\cM$.
\end{proof}

\begin{rem} \label{tauconjugation}
    Condition \cref{taunaturality} also has a formulation in terms of the right action.  Conjugation by $\tau$ defines a category automorphism $\widehat\sigma \colon \cM \to \cM$ extending $\sigma$, with $\widehat\sigma(X)=X^\sigma$ and
    \[
        \widehat\sigma(a)
        :=
        \begin{tikzpicture}[centerzero]
            \draw (0,-1) \botlabel{X^\sigma} -- (0,1) \toplabel{Y^\sigma};
            \coupon{0,-0.55}{\tau^{-1}};
            \coupon{0,0}{a};
            \coupon{0,0.55}{\tau};
        \end{tikzpicture}
        \ ,\qquad a \in \cM(X,Y).
    \]
    Condition \cref{taunaturality} is equivalent to compatibility of $\widehat\sigma$ with the right action, expressed diagrammatically as
    \begin{equation} \label{tauaction}
        \begin{tikzpicture}[centerzero]
            \draw (-0.3,-1) \botlabel{X^\sigma} -- (-0.3,1) \toplabel{Y^\sigma};
            \draw (0.3,-1) \botlabel{Z^\sigma} -- (0.3,1) \toplabel{W^\sigma};
            \genbox{-0.45,-0.7}{0.45,-0.4}{\tau^{-1}};
            \coupon{-0.3,0}{a};
            \coupon{0.3,0}{f};
            \genbox{-0.45,0.4}{0.45,0.7}{\tau};
        \end{tikzpicture}
        =
        \begin{tikzpicture}[centerzero]
            \draw (-0.3,-1) \botlabel{X^\sigma} -- (-0.3,1) \toplabel{Y^\sigma};
            \draw (0.3,-1) \botlabel{Z^\sigma} -- (0.3,1) \toplabel{W^\sigma};
            \coupon{-0.3,-0.55}{\tau^{-1}};
            \coupon{-0.3,0}{a};
            \coupon{0.3,0}{f^\sigma};
            \coupon{-0.3,0.55}{\tau};
        \end{tikzpicture}
        \ ,\qquad a \in \cM(X,Y),\quad f \in \cC(Z,W).
    \end{equation}
    Indeed, expanding the coupons straddling two strands using the last expression in \cref{taudub} and cancelling the outer twist components gives \cref{taunaturality} when $f=1_Z$; the general case then follows from naturality with respect to $f$.
\end{rem}

\begin{eg} \label{taunonnatural}
    A tensor-pair cylinder $\sigma$-twist need not satisfy \cref{taunaturality}, even when $\sigma=\id_\cC$.  Work over $\mathbb{Q}$.  Let $\cC$ have objects $\mathbb{Z}$, tensor product given by addition, and
    \[
        \cC(n,m)=
        \begin{cases}
            \mathbb{Q}, & n=m,\\
            0, & n\ne m.
        \end{cases}
    \]
    Composition and tensor product of morphisms are multiplication of scalars.  Give $\cC$ the braiding $\beta_{n,m}=2^{nm}1_{n+m}$.  Let $\cM$ have the same objects, with $\cM(n,m)=\mathbb{Q}$ for all $n,m$, scalar composition, and right action given by addition on objects and multiplication on morphisms.  The inclusion is the identity on endomorphism spaces, so $(\cM,\cC)$ is a strict tensor pair.  Its distinguished unit object is the integer $0$.

    The family $\tau_n=2^{n^2}1_n$ is natural for morphisms of $\cC$ and satisfies the tensor-pair cylinder twist equation \cref{taudub}, since
    \[
        2^{(n+m)^2}=2^{n^2}\,2^{nm}\,2^{m^2}\,2^{nm}.
    \]
    However, \cref{taureconstruct} gives $E^\tau_{n,m}=2^{m^2+2nm}1_{n+m}$.  Let $a \colon 0 \to 1$ be the morphism represented by the scalar $1$.  In $\cM(1,2)=\mathbb{Q}$, the two sides of \cref{taunaturality} with $Z=1$ evaluate to
    \[
        \begin{tikzpicture}[centerzero]
            \draw (0.3,-1.2) \botlabel{1} -- (0.3,-0.6) \braidup (-0.3,0);
            \draw[over] (-0.3,-1.2) \botlabel{0} -- (-0.3,-0.6) \braidup (0.3,0) \braidup (-0.3,0.6) -- (-0.3,1.2) \toplabel{1};
            \draw[over] (-0.3,0) \braidup (0.3,0.6) -- (0.3,1.2) \toplabel{1};
            \coupon{-0.3,0}{\tau};
            \coupon{-0.3,-0.9}{a};
        \end{tikzpicture}
        =8\ne2=
        \begin{tikzpicture}[centerzero]
            \draw (0.3,-1.2) \botlabel{1} -- (0.3,-0.6) \braidup (-0.3,0);
            \draw[over] (-0.3,-1.2) \botlabel{0} -- (-0.3,-0.6) \braidup (0.3,0) \braidup (-0.3,0.6) -- (-0.3,1.2) \toplabel{1};
            \draw[over] (-0.3,0) \braidup (0.3,0.6) -- (0.3,1.2) \toplabel{1};
            \coupon{-0.3,0}{\tau};
            \coupon{-0.3,0.9}{a};
        \end{tikzpicture}
        .
    \]

    Thus this tensor-pair cylinder twist does not arise from a $\sigma$-braided structure on $\cM$.
\end{eg}

\begin{rem}\label{bottle}
    When $\cM=\cC$, condition \cref{taunaturality} is automatic, so \cref{tensorbraidcorrespondence} specializes to the correspondence described in \cref{cylindertwistcorrespondence} between cylinder $\sigma$-twists on $\cC$ and $\sigma$-braided structures on its regular right module.  For a general tensor pair, \cref{tensorbraidcorrespondence,taunonnatural} show that $\sigma$-braided structures correspond to tensor-pair cylinder $\sigma$-twists satisfying an additional naturality condition.  In particular, arbitrary tensor-pair cylinder twists are \emph{not} a special case of \cref{braidedmodcat}.
\end{rem}

\section{Affinizations as braided module categories\label{S:affbraided}}

We now connect the notion of affinization to that of braided module categories and tensor-pair cylinder twists.  Throughout this section, we continue to assume that $\cC$ is a braided strict monoidal category and $\sigma\colon\cC\to\cC$ is a strict braided monoidal automorphism.  Recall from \cref{S:affmodule} that the right $\cC$-action on $\Aff_\sigma(\cC)$ is the restriction of its tensor product along $\imath_\cC$.

\begin{prop} \label{affbraiding}
    The twisted affinization $\Aff_\sigma(\cC)$ has a canonical $\sigma$-braided right $\cC$-module structure with components
    \begin{equation} \label{affbraid}
        \eta^{\Aff}_{X,Y}
        :=
        \begin{tikzpicture}[centerzero]
            \draw (-0.3,-0.6) \botlabel{X} -- (-0.3,0.6) \toplabel{X};
            \draw (0.3,-0.6) \botlabel{Y} -- (0.3,0.6) \toplabel{Y^\sigma};
            \negdot{0.3,0};
        \end{tikzpicture}
        =
        \begin{tikzpicture}[centerzero]
            \draw (0.3,-0.6) \botlabel{Y} \braidup (-0.3,0);
            \draw[over] (-0.3,-0.6) \botlabel{X} \braidup (0.3,0) \braidup (-0.3,0.6) \toplabel{X};
            \draw[over] (-0.3,0) \braidup (0.3,0.6) \toplabel{Y^\sigma};
            \negdot{-0.3,0};
        \end{tikzpicture}
        \ ,\qquad X,Y \in \Ob(\cC).
    \end{equation}
    The first diagram denotes $1_X\otimes\xi_Y^{-1}\colon X\otimes Y\to X\otimes Y^\sigma$ in the monoidal category $\Aff_\sigma(\cC)$.  In particular, $\eta^{\Aff}_{\one,X}=\xi_X^{-1}$.
\end{prop}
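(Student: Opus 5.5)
The plan is to define $\eta^{\Aff}_{X,Y} := 1_X \otimes \xi_Y^{-1}$ using the monoidal structure of \cref{afftensor}, and then verify the axioms of \cref{braidedmodcat} directly in $\Aff_\sigma(\cC)$.

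First we establish the second equality in \cref{affbraid}, which identifies the cylinder picture with a dot on the $Y$ strand. Here the dot lies on the $Y$ strand after it passes under $X$ and then returns, with the crossings taken to be braidings of $\cC$. Sliding the negative dot through the crossing via the first two relations in \cref{jaguar-} moves it onto the right strand. Then \cref{Reid2} cancels the two crossings, leaving $1_X \otimes \xi_Y^{-1}$. One should check the over/under conventions against the diagram \cref{dubstep}; if they are reversed, we use instead the other sliding relation in \cref{jaguar-}.

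Next, naturality. Naturality in $Y \in \cC$ is the fourth relation in \cref{jaguar-}, tensored on the left with $1_X$. Naturality in the module variable $X$, for an arbitrary morphism $a \colon X \to X'$ of $\Aff_\sigma(\cC)$, follows from the interchange law in the strict monoidal category $\Aff_\sigma(\cC)$:
\[
    (1_{X'} \otimes \xi_Y^{-1}) \circ (a \otimes 1_Y) = a \otimes \xi_Y^{-1} = (a \otimes 1_{Y^\sigma}) \circ (1_X \otimes \xi_Y^{-1}).
\]
This is the point where we genuinely use that the right $\cC$-action is the restriction of a full monoidal structure, and it explains why no extra naturality condition like \cref{taunaturality} is needed.

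Then the axioms. For the octagon axiom \cref{octagon} with $M = X$, the left side is $1_{X\otimes C} \otimes \xi_D^{-1}$. On the right side, we use the second form in \cref{affbraid} for $\eta_{X,\,C\otimes(\text{stuff})}$ and for $\eta_{C,D}$-type pieces; equivalently, we rewrite everything in terms of dots on $D$. Concretely, the right side is the composite of braidings of $D$ around $C$ and an $\eta_{X,-}$. Substituting $\eta_{X,D} = 1_X \otimes \xi_D^{-1}$ and using the second expression for the inner component (a dot on $D$ conjugated by crossings with $C$), the crossings cancel by \cref{Reid2}, so both sides equal $1_{X \otimes C} \otimes \xi_D^{-1}$. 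The multiplicativity axiom \cref{multaxiom} reduces, after writing each $\eta$ as a dot on the $\cC$-strands, to the third relation in \cref{jaguar-} for $\xi_{C\otimes D}^{-1}$. One dot is conjugated by crossings with $C$, and these are again removed using the dot-sliding relations of \cref{jaguar-} together with \cref{Reid2}. Finally, $\eta^{\Aff}_{\one,X} = \xi_X^{-1}$ is immediate from the definition.

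The main obstacle is bookkeeping in the octagon and multiplicativity checks. One must make sure that each "$\eta$" appearing inside the diagrams of \cref{octagon,multaxiom} is interpreted as a single component $\eta_{M,C}$ with the correct module object $M$. Each such component must then be replaced by the correct one of the two expressions in \cref{affbraid}, and the crossing types must match those in \cref{jaguar-}. I would handle this by always first converting every component to the form $1_M \otimes \xi_C^{-1}$ and then reducing both sides to a canonical word: a morphism of $\cC$ followed by dots on individual strands.
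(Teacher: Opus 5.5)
Your proposal is correct and follows essentially the same route as the paper. The equality in \cref{affbraid} comes from the first relation in \cref{jaguar-}: sliding the dot turns the upper crossing $\beta_{Y^\sigma,X}$ into $\beta_{X,Y}^{-1}$, which then cancels against $\beta_{X,Y}$ by \cref{Reid2}. The octagon and multiplicativity axioms reduce, after rewriting components via \cref{affbraid}, to $1_{M\otimes C}\otimes\xi_D^{-1}$ and to the third relation in \cref{jaguar-}, and naturality follows from functoriality of the tensor product in the first variable and the fourth relation in \cref{jaguar-} in the second.
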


\begin{proof}
    The first relation in \cref{jaguar-} gives the equality in \cref{affbraid}.  For $M\in\Ob(\Aff_\sigma(\cC))$ and $C,D\in\Ob(\cC)$, the $\sigma$-octagon axiom \cref{octagon} follows from
    \[
        \begin{tikzpicture}[centerzero]
            \draw (0.35,-0.8) \botlabel{D} \braidup (-0.35,0);
            \draw[over] (-0.35,-0.8) \botlabel{M\otimes C} \braidup (0.35,0) \braidup (-0.35,0.8) \toplabel{M\otimes C};
            \draw[over] (-0.35,0) \braidup (0.35,0.8) \toplabel{D^\sigma};
        \end{tikzpicture}
        \ceqref{affbraid}
        \begin{tikzpicture}[centerzero]
            \draw (-0.6,-0.8) \botlabel{M} -- (-0.6,0.8) \toplabel{M};
            \draw (0,-0.8) \botlabel{C} -- (0,0.8) \toplabel{C};
            \draw (0.6,-0.8) \botlabel{D} -- (0.6,0.8) \toplabel{D^\sigma};
            \negdot{0.6,0};
        \end{tikzpicture}
        \ceqref{affbraid}
        \begin{tikzpicture}[centerzero]
            \draw (-0.6,-0.8) \botlabel{M} -- (-0.6,0.8) \toplabel{M};
            \draw (0.6,-0.8) \botlabel{D} \braidup (0,0);
            \draw[over] (0,-0.8) \botlabel{C} \braidup (0.6,0) \braidup (0,0.8) \toplabel{C};
            \draw[over] (0,0) \braidup (0.6,0.8) \toplabel{D^\sigma};
            \negdot{0,0};
        \end{tikzpicture}
        \ceqref{affbraid}
        \begin{tikzpicture}[centerzero]
            \draw (0.6,-0.8) \botlabel{D} \braidup (-0.6,0);
            \draw[over] (0,-0.8) \botlabel{C} \braidup (0.6,0) \braidup (0,0.8) \toplabel{C};
            \draw[over] (-0.6,-0.8) \botlabel{M} \braidup (0,0) \braidup (-0.6,0.8) \toplabel{M};
            \draw[over] (-0.6,0) \braidup (0.6,0.8) \toplabel{D^\sigma};
        \end{tikzpicture}
        \ .
    \]

    The $\sigma$-multiplicativity axiom \cref{multaxiom} follows from
    \[
        \begin{tikzpicture}[centerzero]
            \draw (0.35,-1) \botlabel{C\otimes D} \braidup (-0.35,0);
            \draw[over] (-0.35,-1) \botlabel{M} \braidup (0.35,0) \braidup (-0.35,1) \toplabelshift{-3pt}{M};
            \draw[over] (-0.35,0) \braidup (0.35,1) \toplabelshift{3pt}{(C\otimes D)^\sigma};
        \end{tikzpicture}
        \ceqref{affbraid}
        \begin{tikzpicture}[centerzero]
            \draw (-0.35,-1) \botlabel{M} -- (-0.35,1) \toplabelshift{-3pt}{M};
            \draw (0.35,-1) \botlabel{C\otimes D} -- (0.35,1) \toplabelshift{3pt}{(C\otimes D)^\sigma};
            \negdot{0.35,0};
        \end{tikzpicture}
        \ceqref{jaguar-}
        \begin{tikzpicture}[centerzero]
            \draw (-0.6,-1) \botlabel{M} -- (-0.6,1) \toplabel{M};
            \draw (0,-1) \botlabel{C} -- (0,1) \toplabel{C^\sigma};
            \draw (0.6,-1) \botlabel{D} -- (0.6,1) \toplabel{D^\sigma};
            \negdot{0,-0.625};
            \negdot{0.6,0.375};
        \end{tikzpicture}
        \ceqref{affbraid}
        \begin{tikzpicture}[centerzero]
            \draw (0,-1) \botlabel{C} \braidup (-0.6,-0.625);
            \draw[over] (-0.6,-1) \botlabel{M} \braidup (0,-0.625) \braidup (-0.6,-0.25) -- (-0.6,1) \toplabel{M};
            \draw[over] (-0.6,-0.625) \braidup (0,-0.25) -- (0,1) \toplabel{C^\sigma};
            \draw (0.6,-1) \botlabel{D} -- (0.6,1) \toplabel{D^\sigma};
            \negdot{0.6,0.375};
        \end{tikzpicture}
        \ceqref{affbraid}
        \begin{tikzpicture}[centerzero]
            \draw (0,-1) \botlabel{C} \braidup (-0.6,-0.625);
            \draw[over] (0.6,-1) \botlabel{D} -- (0.6,-0.25) \braidup (-0.6,0.375);
            \draw[over] (-0.6,-1) \botlabel{M} \braidup (0,-0.625) \braidup (-0.6,-0.25);
            \draw[over] (-0.6,-0.625) \braidup (0,-0.25) \braidup (0.6,0.375) \braidup (0,1) \toplabel{C^\sigma};
            \draw[over] (-0.6,-0.25) \braidup (0,0.375) \braidup (-0.6,1) \toplabel{M};
            \draw[over] (-0.6,0.375) \braidup (0.6,1) \toplabel{D^\sigma};
        \end{tikzpicture}
        \ .
    \]

    Naturality in $X$ holds for all morphisms of $\Aff_\sigma(\cC)$ by functoriality of the tensor product, and naturality in $Y$ follows from the fourth relation in \cref{jaguar-}.
\end{proof}

\begin{theo}[Extension of module actions] \label{affactionextension}
    Let $\cM$ be a strict right $\cC$-module category.  There is a bijection between $\sigma$-braided module structures on $\cM$ and strict right $\Aff_\sigma(\cC)$-actions on $\cM$ extending the given $\cC$-action.  For a $\sigma$-braid $\eta$, the corresponding extended action $\odot$ is uniquely determined by
    \begin{equation} \label{affdotaction}
        1_M\odot
        \begin{tikzpicture}[centerzero]
            \draw (0,-0.6) \botlabel{X} -- (0,0.6) \toplabel{X^\sigma};
            \negdot{0,0};
        \end{tikzpicture}
        =
        \begin{tikzpicture}[centerzero]
            \draw (0.3,-0.6) \botlabel{X} \braidup (-0.3,0);
            \draw[over] (-0.3,-0.6) \botlabel{M} \braidup (0.3,0) \braidup (-0.3,0.6) \toplabel{M};
            \draw[over] (-0.3,0) \braidup (0.3,0.6) \toplabel{X^\sigma};
        \end{tikzpicture}
        =\eta_{M,X},\qquad M\in\Ob(\cM),\quad X\in\Ob(\cC).
    \end{equation}
\end{theo}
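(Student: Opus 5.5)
We construct the two directions separately and then check that they are mutually inverse.

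\emph{From an extended action to a $\sigma$-braid.} Suppose $\odot$ is a strict right $\Aff_\sigma(\cC)$-action on $\cM$ extending the $\cC$-action. We define $\eta_{M,X} := 1_M \odot \xi_X^{-1}$. The remaining verifications are formal consequences of strictness.
\begin{itemize}
    \item \emph{Naturality in $M$} holds because $\odot$ is a bifunctor.
    \item \emph{Naturality in $X$} follows from the fourth relation in \cref{jaguar-}.
    \item \emph{The octagon axiom.} Strictness gives $\eta_{M\otimes C,D} = 1_M \odot (1_C \otimes \xi_D^{-1})$. The first relation in \cref{jaguar-} rewrites $1_C\otimes\xi_D^{-1}$ as a braid, then a dot on the left strand, then a braid. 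Applying $1_M\odot -$ and using strictness once more yields \cref{octagon}.
    \item \emph{Multiplicativity.} Use the third relation in \cref{jaguar-} to split the dot on $C \otimes D$, then the octagon just proved. This is exactly the computation in the proof of \cref{affbraiding}, transported along $1_M\odot-$.
\end{itemize}
In short, this direction is the pullback of the canonical braiding $\eta^{\Aff}$ of \cref{affbraiding}.

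\emph{From a $\sigma$-braid to an extended action.} Conversely, let $\eta$ be a $\sigma$-braid. For each $M \in \Ob(\cM)$ we must produce a strict action extending the $\cC$-action. The cleanest route is to use the module-category presentation of \cref{jungleneg}. A strict right $\Aff_\sigma(\cC)$-action on $\cM$ is the same as a strict monoidal functor $\Aff_\sigma(\cC)\to\cEnd(\cM)^\rev$. Equivalently, by the universal property of the presentation of $\Aff_\sigma(\cC)$ as a monoidal category (\cref{affdef}, together with the tensor product of \cref{afftensor}), we may proceed as follows.
\begin{enumerate}
    \item Assign to each coil $\xi_{X,Y}$ the natural transformation with components
    \[
        1_M\odot\xi_{X,Y} := \eta_{M,Y}^{-1}\text{-based composite modeled on \cref{stealth}},
    \]
    namely $(\eta_{M,Y}^{-1}\otimes 1_X)\circ \beta_{Y^\sigma,X}^{-1}$ acting on $M\otimes X\otimes Y^\sigma$.
    \item Check that these assignments respect \cref{coilrel1} and \cref{coilrel2}. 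Relation \cref{coilrel2} follows from naturality of $\eta$ and \cref{lynx}. Relation \cref{coilrel1} follows from the multiplicativity axiom \cref{multaxiom} and \cref{Reid3}, mirroring the corresponding computation for $\bG$ in the proof of \cref{jungle}.
    \item Conclude that we obtain a functor $\Aff_\sigma(\cC)\to\cEnd(\cM)^\rev$ on generators.
    \item Verify that this functor is strict monoidal. This amounts to checking compatibility with \cref{stack}. Here the octagon axiom \cref{octagon} is precisely what makes $1_M\odot(\xi_{X,Y}\otimes 1_Z)$ agree with the formula given by \cref{stack}.
    \item For the second formula in \cref{stack}, check that tensoring on the left by $1_X$ is consistent. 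This reduces to the fact that $M\odot(X\otimes -)=(M\otimes X)\odot -$, combined with the octagon at $M\otimes X$.
\end{enumerate}
By construction, $1_M\odot\xi_X^{-1}=\eta_{M,X}$, which is \cref{affdotaction}.

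\emph{Bijectivity and the main obstacle.} Uniqueness of the extension, and hence bijectivity, holds because coils are generated by the dots $\xi_X$ and morphisms of $\cC$ via \cref{stealth}. Consequently any extension is determined by the values $1_M\odot\xi_X^{-1}$. Starting from $\eta$, building $\odot$, and restricting back returns $\eta$. Starting from $\odot$, passing to $\eta$ and rebuilding gives an action that agrees with $\odot$ on generators, so the two constructions are mutually inverse.

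The main obstacle is the second direction, specifically making precise that the assignment defines a \emph{strict monoidal} functor to $\cEnd(\cM)^\rev$, rather than merely a functor. The module presentation is stated for the regular module, so we cannot apply it directly. Instead we must verify the coil relations componentwise for every $M$, and then verify interchange between dots and morphisms of $\Aff_\sigma(\cC)$ placed to the left. My plan for interchange is as follows:
\begin{itemize}
    \item Use \cref{afftensor}: the tensor product on $\Aff_\sigma(\cC)$ is generated by whiskering with identities as in \cref{stack}.
    \item It therefore suffices to check the functor on whiskered generators $1_A\otimes\xi_{X,Y}\otimes 1_B$.
    \item These whiskered generators are handled by octagon and naturality at the module objects $M\otimes A$.
\end{itemize}
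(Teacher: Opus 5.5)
Your proposal is correct. The first direction, from an action to a $\sigma$-braid, is the same as the paper's: the first, third and fourth relations of \cref{jaguar-} translate into the octagon, multiplicativity and naturality axioms. The converse direction takes a genuinely different route.

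\textbf{The paper's route.} The paper uses the all-object negative-dot presentation of $\Aff_\sigma(\cC)$ \emph{as a strict monoidal category}. This is the twisted analogue of the presentation referred to in \cref{monpresrem}, with generators the $\xi_X^{-1}$ subject to exactly those three relations. Assigning $(\eta_{M,X})_M$ to $\xi_X^{-1}$ then extends at once to a strict monoidal functor $\Aff_\sigma(\cC)\to\cEnd(\cM)^\rev$. Both directions are the same relation-to-axiom dictionary, read in opposite directions.

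\textbf{Your route.} You work with the coil presentation of \cref{affdef}. That is only a presentation of a category, not of a monoidal category, so two separate checks are needed.
\begin{enumerate}
    \item You check \cref{coilrel1,coilrel2} for the formula $(\eta_{M,Y}^{-1}\otimes 1_X)\circ(1_M\otimes\beta^{-1}_{Y^\sigma,X})$. For \cref{coilrel2} this uses naturality of $\eta$ and of the braiding. For \cref{coilrel1} it uses multiplicativity, then the octagon to expand $\eta^{-1}_{M\otimes Y^\sigma,Z}$, then the hexagon and Yang--Baxter relations to match the remaining braids.
    \item You verify strict monoidality separately, on whiskered coils, using \cref{stack}.
\end{enumerate}
This is essentially the module-level analogue of the construction of $\bG$ in the proof of \cref{jungle}. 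It is longer than the paper's argument but self-contained: it needs only \cref{affdef,afftensor}, rather than the twisted monoidal dot presentation whose proof the paper only sketches.

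\textbf{A small correction to your step 4.} The octagon is not needed there. The identity $1_M\odot(\xi_{X,Y}\otimes 1_Z)=(1_M\odot\xi_{X,Y})\otimes 1_Z$ follows from the hexagon alone, since $\beta^{-1}_{Y^\sigma,X\otimes Z}\circ(1_X\otimes\beta_{Y^\sigma,Z})=\beta^{-1}_{Y^\sigma,X}\otimes 1_Z$. The octagon is genuinely used in step 5, the left whiskering, where $\eta_{M\otimes X,Z}$ must be rewritten in terms of $\eta_{M,Z}$, and in the verification of \cref{coilrel1}.
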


\begin{proof}
    We use the all-object version of the monoidal dot presentation discussed in \cref{monpresrem}\cref{monpresrem1}.  Inverting its positive-dot generators gives a presentation of $\Aff_\sigma(\cC)$ by the invertible morphisms $\xi_X^{-1}\colon X\to X^\sigma$, for all $X\in\Ob(\cC)$, with the first, third, and fourth relations in \cref{jaguar-}.  This presentation does not require the objects of $\cC$ to be freely generated.

    Write $\bA \colon \cC\to\cEnd(\cM)^\rev$ for the given action functor.  Suppose first that $\odot$ is a strict $\Aff_\sigma(\cC)$-action extending the given $\cC$-action, and define $\eta_{M,X}$ by \cref{affdotaction}.  These morphisms are invertible, and functoriality of $\odot$ gives naturality in $M$.  The fourth relation in \cref{jaguar-} gives naturality in $X$.

    The following calculation shows that the first relation in \cref{jaguar-}, after applying the action, is equivalent to the $\sigma$-octagon axiom \cref{octagon} with $C=Y$ and $D=X$:
    \[
        \begin{tikzpicture}[centerzero]
            \draw (0.35,-0.8) \botlabel{X} \braidup (-0.35,0);
            \draw[over] (-0.35,-0.8) \botlabel{M\otimes Y} \braidup (0.35,0) \braidup (-0.35,0.8) \toplabel{M\otimes Y};
            \draw[over] (-0.35,0) \braidup (0.35,0.8) \toplabel{X^\sigma};
        \end{tikzpicture}
        \ceqref{affdotaction}
        1_M\odot
        \begin{tikzpicture}[centerzero]
            \draw (-0.3,-0.8) \botlabel{Y} -- (-0.3,0.8) \toplabel{Y};
            \draw (0.3,-0.8) \botlabel{X} -- (0.3,0.8) \toplabel{X^\sigma};
            \negdot{0.3,0};
        \end{tikzpicture}
        \ceqref{Reid2}
        1_M\odot
        \begin{tikzpicture}[centerzero]
            \draw (0.3,-0.8) \botlabel{X} \braidup (-0.3,-0.2) \braidup (0.3,0.4) -- (0.3,0.8) \toplabel{X^\sigma};
            \draw[over] (-0.3,-0.8) \botlabel{Y} \braidup (0.3,-0.2) \braidup (-0.3,0.4) -- (-0.3,0.8) \toplabel{Y};
            \negdot{0.3,0.6};
        \end{tikzpicture}
        \ceqref{jaguar-}
        1_M\odot
        \begin{tikzpicture}[centerzero]
            \draw (0.3,-0.8) \botlabel{X} \braidup (-0.3,-0.2);
            \draw[over] (-0.3,-0.8) \botlabel{Y} \braidup (0.3,-0.2) \braidup (-0.3,0.4) -- (-0.3,0.8) \toplabel{Y};
            \draw[over] (-0.3,-0.2) \braidup (0.3,0.4) -- (0.3,0.8) \toplabel{X^\sigma};
            \negdot{-0.3,-0.2};
        \end{tikzpicture}
        \ceqref{affdotaction}
        \begin{tikzpicture}[centerzero]
            \draw (0.6,-0.8) \botlabel{X} \braidup (-0.6,0);
            \draw[over] (0,-0.8) \botlabel{Y} \braidup (0.6,0) \braidup (0,0.8) \toplabel{Y};
            \draw[over] (-0.6,-0.8) \botlabel{M} \braidup (0,0) \braidup (-0.6,0.8) \toplabel{M};
            \draw[over] (-0.6,0) \braidup (0.6,0.8) \toplabel{X^\sigma};
        \end{tikzpicture}
        \ .
    \]

    Using the $\sigma$-octagon identity just established, the third relation in \cref{jaguar-}, after applying the action, becomes the $\sigma$-multiplicativity axiom \cref{multaxiom}, by the same diagrammatic expansion as in the proof of \cref{affbraiding}.  This calculation is reversible: the intermediate $C$-strand is labeled $C^\sigma$ after passing through its negative dot.  Thus $\eta$ is a $\sigma$-braid.

    Conversely, suppose that $\eta$ is a $\sigma$-braid.  For each $X$, naturality in $M$ makes the family $(\eta_{M,X})_M$ an invertible natural transformation from $\bA(X)$ to $\bA(X^\sigma)$.  Assign this transformation to the negative-dot generator $\xi_X^{-1}$, retaining $\bA$ on $\cC$.  Naturality in $X$ gives the fourth relation in \cref{jaguar-}, the $\sigma$-octagon axiom gives the first by the equivalence established above, and the $\sigma$-multiplicativity axiom \cref{multaxiom} then gives the third by reversing the same diagrammatic expansion.  By the all-object presentation, this assignment extends uniquely to a strict monoidal functor on $\Aff_\sigma(\cC)$, hence to a strict action $\odot$ satisfying \cref{affdotaction}.

    If we start with a $\sigma$-braid $\eta$ and construct its extended action, then \cref{affdotaction} shows that the $\sigma$-braid obtained from this action is exactly $\eta$.

    Now start with an extended action $\odot$, form its $\sigma$-braid, and use that braiding to construct an action $\odot'$.  By \cref{affdotaction}, each negative-dot generator acts in the same way under $\odot'$ as under $\odot$.  The two actions also agree on $\cC$, since both extend the given action.  The all-object presentation therefore implies that the two actions agree on every morphism.  Hence the two constructions are inverse.
\end{proof}

A strict $\cC$-module functor between $\sigma$-braided module categories is \emph{braided} if it preserves the $\sigma$-braid.  A \emph{module natural transformation} between strict module functors is a natural transformation $\alpha$ satisfying $\alpha_{M\otimes X}=\alpha_M\otimes1_X$.  We write $\operatorname{Fun}^{\mathrm{br,str}}_{\cC}(\cM,\cN)$ for the category of strict braided module functors and module natural transformations.  Under \cref{affactionextension}, these are exactly the strict $\Aff_\sigma(\cC)$-module functors and their module natural transformations: compatibility with the additional negative-dot generators is precisely preservation of the $\sigma$-braid.

The following result shows that $\Aff_\sigma(\cC)$ is the \emph{free $\sigma$-braided right $\cC$-module category on one object}, with distinguished generator $\one$.

\begin{cor}[Freeness] \label{afffree}
    For every strict $\sigma$-braided right $\cC$-module category $\cM$, evaluation at $\one$ gives an isomorphism of categories, natural in $\cM$,
    \begin{equation} \label{affevaluation}
        \operatorname{Fun}^{\mathrm{br,str}}_{\cC}\bigl(\Aff_\sigma(\cC),\cM\bigr)
        \xrightarrow{\sim}\cM,\qquad \bF\longmapsto\bF(\one).
    \end{equation}
    Its inverse sends $M\in\Ob(\cM)$ to the unique strict braided module functor $\bF_M$ with $\bF_M(\one)=M$.  This functor is given by
    \[
        \bF_M(X)=M\otimes X,\qquad
        \bF_M\bigl(\imath_\cC(f)\bigr)=1_M\otimes f,
    \]
    and
    \begin{equation} \label{afffreegenerator}
        \bF_M\colon
        \begin{tikzpicture}[centerzero]
            \draw (0,-0.6) \botlabel{X} -- (0,0.6) \toplabel{X^\sigma};
            \negdot{0,0};
        \end{tikzpicture}
        \longmapsto
        \begin{tikzpicture}[centerzero]
            \draw (0.3,-0.6) \botlabel{X} \braidup (-0.3,0);
            \draw[over] (-0.3,-0.6) \botlabel{M} \braidup (0.3,0) \braidup (-0.3,0.6) \toplabel{M};
            \draw[over] (-0.3,0) \braidup (0.3,0.6) \toplabel{X^\sigma};
        \end{tikzpicture}
        \ .
    \end{equation}
    A morphism $a\colon M\to N$ corresponds to the module natural transformation with components $a\otimes1_X$.
\end{cor}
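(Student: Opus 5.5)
The plan is to build the inverse of evaluation at $\one$ explicitly, and then check that the two functors compose to the identity in both directions.

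\textbf{Step 1: the inverse on objects.} Fix $M \in \Ob(\cM)$. The category $\cM$ is a strict right $\cC$-module with a $\sigma$-braid $\eta$. By \cref{affactionextension}, this action extends to a strict right $\Aff_\sigma(\cC)$-action $\odot$, characterized by \cref{affdotaction}. Define $\bF_M(X) := M \odot X = M\otimes X$ and $\bF_M(a) := 1_M \odot a$ for morphisms $a$ of $\Aff_\sigma(\cC)$.
\begin{itemize}
    \item Strictness of the action gives $\bF_M(a \otimes \imath_\cC(g)) = \bF_M(a)\otimes g$, so $\bF_M$ is a strict $\cC$-module functor with $\bF_M(\one)=M$.
    \item The displayed formulas, including \cref{afffreegenerator}, then follow from \cref{affdotaction}.
    \item Braidedness: using the left form of \cref{affbraid}, $\bF_M(\eta^{\Aff}_{X,Y}) = 1_M\odot(1_X\otimes \xi_Y^{-1}) = 1_{M\otimes X}\odot \xi_Y^{-1} = \eta_{M\otimes X,Y}$, again by \cref{affdotaction}.
    \item A morphism $a\colon M\to N$ gives components $a\otimes 1_X$. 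These are natural with respect to $\imath_\cC$-morphisms by bifunctoriality, and natural with respect to negative dots by naturality of $\eta$ in its first argument. They are clearly module natural.
\end{itemize}
This defines a functor $\cM \to \operatorname{Fun}^{\mathrm{br,str}}_{\cC}(\Aff_\sigma(\cC),\cM)$. Evaluating it at $\one$ is the identity on $\cM$, since $a\otimes 1_\one=a$.

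\textbf{Step 2: uniqueness, the main point.} Let $\bF$ be any strict braided module functor with $\bF(\one)=M$.
\begin{itemize}
    \item Strictness forces $\bF(X)=\bF(\one\otimes X)=M\otimes X$ and $\bF(\imath_\cC(f)) = \bF(1_\one\otimes f)=1_M\otimes f$.
    \item By braidedness, $\bF(\xi_X^{-1}) = \bF(\eta^{\Aff}_{\one,X}) = \eta_{M,X}$.
\end{itemize}
Morphisms of $\cC$ and the $\xi_X^{\pm1}$ generate $\Aff_\sigma(\cC)$ under composition and tensor product. Here tensoring on the left is not directly available, but every morphism is a composite of morphisms of the form $a\otimes \imath_\cC(g)$ with $a$ a dot or $1_\one$: \cref{jungleneg} says exactly that dots and $\cC$-morphisms generate $\Aff_\sigma(\cC)$ as a right $\cC$-module. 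Hence $\bF$ agrees with $\bF_M$ on generators of the module category and so equals $\bF_M$.

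\textbf{Step 3: full faithfulness.} Let $\alpha\colon \bF_M\Rightarrow\bF_N$ be a module natural transformation. The module condition gives $\alpha_X=\alpha_{\one\otimes X}=\alpha_\one\otimes 1_X$. So $\alpha$ is determined by $a:=\alpha_\one\colon M\to N$, and every such $a$ arises by Step 1. Thus evaluation is bijective on hom-sets, and with Step 2 it is an isomorphism of categories.

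\textbf{Step 4: naturality in $\cM$.} For a strict braided module functor $\bH\colon\cM\to\cN$, we have $\bH\circ\bF_M=\bF_{\bH(M)}$ by the uniqueness in Step 2, since $\bH\circ\bF_M$ is strict braided and sends $\one$ to $\bH(M)$.

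The only delicate point is the generation argument in Step 2, which I handle via the module presentation \cref{jungleneg} rather than the monoidal one.
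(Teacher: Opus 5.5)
Your proof is correct. Steps 1, 3 and 4 match the paper's proof. You build $\bF_M$ from the extended action $\odot$ of \cref{affactionextension}, and a module natural transformation is forced to have components $\alpha_\one\otimes 1_X$. You also spell out the braidedness of $\bF_M$ and the naturality in $\cM$, which the paper leaves implicit.

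The real difference is in the uniqueness step. The paper argues that any strict braided $\cC$-module functor $\bF$ out of $\Aff_\sigma(\cC)$ is automatically a strict $\Aff_\sigma(\cC)$-module functor for the two extended actions. Then $\bF(f)=\bF(1_\one\odot f)=1_{\bF(\one)}\odot f$. That identification, stated just before the corollary, tacitly requires checking that $\bF$ intertwines the extended action on every morphism of $\Aff_\sigma(\cC)$. This follows from the monoidal dot presentation, interchange, and $1_X\otimes\xi_Y^{-1}=\eta^{\Aff}_{X,Y}$.

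You instead use braidedness only at the leftmost position, $\bF(\xi_X^{-1})=\bF(\eta^{\Aff}_{\one,X})=\eta_{M,X}$. You then combine strictness with generation of $\Aff_\sigma(\cC)$ as a right $\cC$-module by dots (\cref{jungleneg}). This sidesteps the left-tensoring issue you flag.

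Each route has its advantage. Yours is more hands-on and needs less. The paper's is more conceptual: the identification of strict braided $\cC$-module functors with strict $\Aff_\sigma(\cC)$-module functors holds between arbitrary $\sigma$-braided module categories. It therefore reduces freeness to the tautological freeness of the regular module on $\one$.

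For your generation step, even \cref{jungleneg} is more than you need. By \cref{affdef}, $\Aff_\sigma(\cC)$ is generated under composition alone by morphisms of $\cC$ and coils and their inverses. Moreover, \cref{stealth} writes each coil as $(\xi_Y\otimes 1_X)\circ\beta^{-1}_{Y^\sigma,X}$.
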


\begin{proof}
    Under \cref{affactionextension}, the canonical $\sigma$-braid $\eta^{\Aff}$ in \cref{affbraid} corresponds to the regular right $\Aff_\sigma(\cC)$-action on $\Aff_\sigma(\cC)$ given by its tensor product.  Let $\odot$ be the strict right $\Aff_\sigma(\cC)$-action on $\cM$ corresponding to its given $\sigma$-braid.

    For each $M\in\Ob(\cM)$, the functor $\bF_M$ defined by $\bF_M(f):=1_M\odot f$ is a strict $\Aff_\sigma(\cC)$-module functor, hence a strict braided $\cC$-module functor, with the stated values.  Conversely, any strict braided $\cC$-module functor $\bF\colon\Aff_\sigma(\cC)\to\cM$ is a strict $\Aff_\sigma(\cC)$-module functor for these actions.  It therefore satisfies $\bF(f)=1_{\bF(\one)}\odot f$ for every $f\in\Mor(\Aff_\sigma(\cC))$, so $\bF=\bF_{\bF(\one)}$.

    Finally, every $\cC$-module natural transformation $\alpha\colon\bF_M\Rightarrow\bF_N$ satisfies $\alpha_X=\alpha_\one\otimes1_X$.  Conversely, each morphism $a\colon M\to N$ defines a $\cC$-module natural transformation with components $a\otimes1_X$, by functoriality of the extended action $\odot$.
\end{proof}

\begin{rem} \label{affstrongfunctors}
    There is also a formulation using module functors that are not required to be strict.  A module functor $(\bH,\omega)\colon\cM\to\cN$ between $\sigma$-braided module categories is braided if
    \[
        \begin{tikzcd}[column sep=2.5cm]
            \bH(M\otimes X) \ar[r,"\bH(\eta^{\cM}_{M,X})"] \ar[d,"\omega_{M,X}"']
            & \bH(M\otimes X^\sigma) \ar[d,"\omega_{M,X^\sigma}"] \\
            \bH(M)\otimes X \ar[r,"\eta^{\cN}_{\bH(M),X}"']
            & \bH(M)\otimes X^\sigma
        \end{tikzcd}
    \]
    commutes.  A module natural transformation $\alpha\colon(\bF,\omega^{\bF})\Rightarrow(\bG,\omega^{\bG})$ satisfies
    \[
        \omega^{\bG}_{M,X}\circ\alpha_{M\otimes X}
        =(\alpha_M\otimes1_X)\circ\omega^{\bF}_{M,X}.
    \]
    With these functors and transformations, evaluation at $\one$ is an equivalence of categories.  Indeed, the components $\omega_{\one,X}\colon\bH(X)\to\bH(\one)\otimes X$ give a braided module natural isomorphism $\bH\cong\bF_{\bH(\one)}$; this follows from module coherence and compatibility with the braiding on the negative-dot generators.
\end{rem}

\begin{cor}[Tensor-pair cylinder $\sigma$-twists] \label{affcylindertwists}
    Let $(\cM,\cC)$ be a strict tensor pair, with inclusion $j\colon\cC\hookrightarrow\cM$.  There is a bijection between tensor-pair cylinder $\sigma$-twists $\tau$ for $(\cM,\cC)$ and strict $\cC$-module functors $\bF\colon\Aff_\sigma(\cC)\to\cM$ satisfying $\bF\circ\imath_\cC=j$.  The correspondence is determined by
    \begin{equation} \label{afftwistgenerator}
        \bF\colon
        \begin{tikzpicture}[centerzero]
            \draw (0,-0.5) \botlabel{X} -- (0,0.5) \toplabel{X^\sigma};
            \negdot{0,0};
        \end{tikzpicture}
        \longmapsto
        \begin{tikzpicture}[centerzero]
            \draw (0,-0.5) \botlabel{X} -- (0,0.5) \toplabel{X^\sigma};
            \coupon{0,0}{\tau};
        \end{tikzpicture}
        \ .
    \end{equation}
    This functor takes values in the subcategory $\cM^\tau$ of \cref{taugenerated}, and its corestriction to $\cM^\tau$ is braided for the $\sigma$-braid $E^\tau$.

    In particular, cylinder $\sigma$-twists on $\cC$ correspond to strict $\cC$-module retractions $r\colon\Aff_\sigma(\cC)\to\cC$ of $\imath_\cC$, with $\tau_X=r(\xi_X^{-1})$.
\end{cor}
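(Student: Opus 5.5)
The plan is to reduce to the module-category presentation in \cref{jungleneg}. By that corollary, $\Aff_\sigma(\cC)$ is the right $\cC$-module generated by the negative dots $\xi_X^{-1}\colon X\to X^\sigma$ for all $X\in\Ob(\cC)$. The relations are the tensor-product rule (first equality in \cref{griffin}), the twisted reflection equation (second equality) and naturality with respect to morphisms of $\cC$. Hence a strict $\cC$-module functor $\bF\colon\Aff_\sigma(\cC)\to\cM$ with $\bF\circ\imath_\cC=j$ is the same thing as a family of morphisms $\tau_X:=\bF(\xi_X^{-1})\in\cM(X,X^\sigma)$ whose images satisfy these relations in $\cM$. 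Such a family is automatically invertible, since the dots are, and a $\tau$ that is natural and invertible in $\cM$ is all we need in the converse direction. Uniqueness of $\bF$ given $\tau$ is immediate because the dots and $\cC$ generate.

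First I translate each relation. Naturality in \cref{griffin} is exactly \cref{taubasenaturality}. The tensor-product rule, read in $\cM$, is the first equality in \cref{taudub}: the dot on $X\otimes Y$ equals $\tau_X$, then a braiding, then $\tau_Y$ on the left, then a braiding back. Here I use that the module action restricted to $\cC$ is the tensor product of $\cC$. The twisted reflection equation is the equality of the two expressions in \cref{taudub}. Definition \cref{TPcyltwist} already notes that this equality follows from \cref{taubasenaturality} applied to the braiding together with the first equality of \cref{taudub}. Consequently a tensor-pair cylinder $\sigma$-twist gives a well-defined $\bF$. Conversely, such an $\bF$ gives $\tau$ satisfying \cref{taudub} and \cref{taubasenaturality}, and naturality holds as $i\Rightarrow i\circ\sigma$ because $\bF$ restricts to $j$ on $\cC$. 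The two assignments are mutually inverse by \cref{afftwistgenerator} and generation.

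Next I show that $\bF$ lands in $\cM^\tau$ and is braided for $E^\tau$. By \cref{taugenerated}, $\cM^\tau$ is a subcategory closed under the right action that contains $\cC$ and all $\tau_X^{\pm1}$. Since the image of $\bF$ is generated under composition and the right action by these morphisms, it lies in $\cM^\tau$. Braidedness means $\bF(\eta^{\Aff}_{X,Y})=E^\tau_{X,Y}$. Using the second form of \cref{affbraid}, namely a braiding, the dot on the left and a braiding back, $\bF$ sends this to exactly \cref{taureconstruct}.

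The final claim is the special case $\cM=\cC$, $j=\id$, where $\cM^\tau=\cC$ by \cref{bottle}. There, tensor-pair cylinder twists are cylinder $\sigma$-twists, and module functors with $r\circ\imath_\cC=\id$ are retractions. The main obstacle, though mild, is the bookkeeping in the translation: one must check that the diagrams in \cref{griffin}, interpreted in $\cM$ with the dot at the left and with the module action on $\cC$-morphisms, coincide with those in \cref{taudub}, including the over/under conventions of the crossings. I would verify this carefully, relation by relation.
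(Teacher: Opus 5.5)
Your proposal is correct and follows the paper's proof essentially step for step. You identify the relations of \cref{jungleneg} with \cref{taudub,taubasenaturality}, observing that the twisted reflection equation is the redundant parenthesized equality; you use \cref{taugenerated} to place the image of $\bF$ in $\cM^\tau$; you apply $\bF$ to the crossing form of \cref{affbraid} to obtain \cref{taureconstruct}; and you specialize to $\cM=\cC$ for the final claim.
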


\begin{proof}
    In \cref{jungleneg}, the defining relations \cref{griffin} are exactly \cref{taudub,taubasenaturality}.  The equality in parentheses in \cref{taudub} follows from the other defining conditions, so the presentation in \cref{jungleneg} gives the claimed bijection without requiring \cref{taunaturality} for all morphisms of $\cM$.

    By \cref{taugenerated}, the subcategory $\cM^\tau$ contains the image of $\cC$, all components of $\tau$ and their inverses, and is closed under the right $\cC$-action.  Thus it contains the image of $\bF$.  Applying $\bF$ to the last diagram in \cref{affbraid} gives precisely \cref{taureconstruct}, proving that the corestriction is braided.  Taking $\cM=\cC$ gives the final assertion.
\end{proof}

\begin{rem}[Relation to the literature]
    For $\sigma=\id_\cC$, Brochier proves an analogous freeness result for framed oriented tangles in the solid torus, viewed as a braided module category over the ordinary tangle category \cite[Thms.~5.2--5.3]{Bro13}.  The specialization of \cref{affactionextension} to $\sigma=\id_\cC$ is an algebraic counterpart of the description of braided module categories as modules over an annulus category in the factorization-homology setting of \cite[Thm.~3.11]{BBJ18}.  The distinction between annulus and cylinder framings is explained in \cite[Rem.~3.6]{BBJ18}; our statements use the cylinder convention of \cref{braidedmodcat}.  The proofs here use the dot presentation and require neither rigidity nor a balancing.
\end{rem}

\section{Applications\label{S:applications}}

In this final section, we apply the results of \cref{S:affmodule} to deduce module-category presentations of some categories that play a prominent role in the representation theory of quantum groups.  Such presentations are fundamental ingredients in diagrammatic treatments of quantum symmetric pairs; see \cite{SSSsub26,SSS26,SSS-AIII,SSS-BCD}.  Throughout this section, $\kk$ is an arbitrary commutative ring and $z,t \in \kk^\times$.

\subsection{The affine Kauffman skein category}

The \emph{Kauffman skein category} $\Kcat(z,t)$ is the strict $\kk$-linear monoidal category generated by an object $\Kobj$ and morphisms
\[
    \Kcap \colon \Kobj \otimes \Kobj \to \one,\quad
    \Kcup \colon \one \to \Kobj \otimes \Kobj,\quad
    \poscross,\, \negcross \colon \Kobj \otimes \Kobj \to \Kobj \otimes \Kobj,
\]
subject to the relations
\begin{gather} \label{braid}
    \begin{tikzpicture}[centerzero]
        \draw (0.2,-0.4) to[out=135,in=down] (-0.15,0) to[out=up,in=225] (0.2,0.4);
        \draw[over] (-0.2,-0.4) to[out=45,in=down] (0.15,0) to[out=up,in=-45] (-0.2,0.4);
    \end{tikzpicture}
    \ =\
    \begin{tikzpicture}[centerzero]
        \draw (-0.2,-0.4) -- (-0.2,0.4);
        \draw (0.2,-0.4) -- (0.2,0.4);
    \end{tikzpicture}
    \ =\
    \begin{tikzpicture}[centerzero]
        \draw (-0.2,-0.4) to[out=45,in=down] (0.15,0) to[out=up,in=-45] (-0.2,0.4);
        \draw[over] (0.2,-0.4) to[out=135,in=down] (-0.15,0) to[out=up,in=225] (0.2,0.4);
    \end{tikzpicture}
    \ ,\qquad
    \begin{tikzpicture}[centerzero]
        \draw (0.4,-0.4) -- (-0.4,0.4);
        \draw[over] (0,-0.4) to[out=135,in=down] (-0.32,0) to[out=up,in=225] (0,0.4);
        \draw[over] (-0.4,-0.4) -- (0.4,0.4);
    \end{tikzpicture}
    \ =\
    \begin{tikzpicture}[centerzero]
        \draw (0.4,-0.4) -- (-0.4,0.4);
        \draw[over] (0,-0.4) to[out=45,in=down] (0.32,0) to[out=up,in=-45] (0,0.4);
        \draw[over] (-0.4,-0.4) -- (0.4,0.4);
    \end{tikzpicture}
    \ ,
    \\ \label{drops}
    \begin{tikzpicture}[centerzero]
        \draw (-0.3,0.4) -- (-0.3,0) arc(180:360:0.15) arc(180:0:0.15) -- (0.3,-0.4);
    \end{tikzpicture}
    =
    \begin{tikzpicture}[centerzero]
        \draw (0,-0.4) -- (0,0.4);
    \end{tikzpicture}
    = \
    \begin{tikzpicture}[centerzero]
        \draw (-0.3,-0.4) -- (-0.3,0) arc(180:0:0.15) arc(180:360:0.15) -- (0.3,0.4);
    \end{tikzpicture}
    \ ,\qquad
    \begin{tikzpicture}[anchorbase]
        \draw (-0.15,-0.4) to[out=60,in=-90] (0.15,0) arc(0:180:0.15);
        \draw[over] (-0.15,0) to[out=-90,in=120] (0.15,-0.4);
    \end{tikzpicture}
    = t \
    \Kcap
    \ ,\qquad
    \begin{tikzpicture}[centerzero]
        \draw (-0.2,-0.3) -- (-0.2,-0.1) arc(180:0:0.2) -- (0.2,-0.3);
        \draw[over] (-0.3,0.3) \braiddown (0,-0.3);
    \end{tikzpicture}
    =
    \begin{tikzpicture}[centerzero]
        \draw (-0.2,-0.3) -- (-0.2,-0.1) arc(180:0:0.2) -- (0.2,-0.3);
        \draw[over] (0.3,0.3) \braiddown (0,-0.3);
    \end{tikzpicture}
    \ ,
    \\ \label{skein}
    \begin{tikzpicture}[centerzero]
        \draw (0.35,-0.35) -- (-0.35,0.35);
        \draw[over] (-0.35,-0.35) -- (0.35,0.35);
    \end{tikzpicture}
    \ -\
    \begin{tikzpicture}[centerzero]
        \draw (-0.35,-0.35) -- (0.35,0.35);
        \draw[over] (0.35,-0.35) -- (-0.35,0.35);
    \end{tikzpicture}
    = z
    \left(\
        \begin{tikzpicture}[centerzero]
            \draw (-0.2,-0.35) -- (-0.2,0.35);
            \draw (0.2,-0.35) -- (0.2,0.35);
        \end{tikzpicture}
        \ -\
        \begin{tikzpicture}[centerzero]
            \draw (-0.2,-0.35) -- (-0.2,-0.3) arc(180:0:0.2) -- (0.2,-0.35);
            \draw (-0.2,0.35) -- (-0.2,0.3) arc(180:360:0.2) -- (0.2,0.35);
        \end{tikzpicture}\
    \right)
    ,\qquad
    \Kbub\ = \frac{t-t^{-1}}{z}+1.
\end{gather}
The first relation in \cref{skein} is called the (Dubrovnik form of the) \emph{Kauffman skein relation}.

As shown in \cite[\S8.3]{MS21}, the affinization of $\Kcat(z,t)$, called the \emph{affine Kauffman skein category} $\AKcat(z,t)$, is the $\kk$-linear strict monoidal category obtained from $\Kcat(z,t)$ by adjoining an invertible morphism
\[
    \Kopendot \colon \Kobj \to \Kobj,
\]
which we call a \emph{dot}, subject to the relations
\begin{equation} \label{Kdotslide}
    \begin{tikzpicture}[centerzero]
        \draw (-0.3,-0.3) -- (0.3,0.3);
        \draw[over] (0.3,-0.3) -- (-0.3,0.3);
        \opendot{-0.19,-0.19};
    \end{tikzpicture}
    =
    \begin{tikzpicture}[centerzero]
        \draw (0.3,-0.3) -- (-0.3,0.3);
        \draw[over] (-0.3,-0.3) -- (0.3,0.3);
        \opendot{0.15,0.15};
    \end{tikzpicture}
    \ ,\qquad
    \begin{tikzpicture}[centerzero]
        \draw (-0.2,-0.2) -- (-0.2,0) arc(180:0:0.2) -- (0.2,-0.2);
        \opendot{-0.2,0};
    \end{tikzpicture}
    =
    \begin{tikzpicture}[centerzero]
        \draw (-0.2,-0.2) -- (-0.2,0) arc(180:0:0.2) -- (0.2,-0.2);
        \closeddot{0.2,0};
    \end{tikzpicture}
    \ ,
\end{equation}
where $\Kcloseddot$ denotes the inverse of the dot.  This category first appeared in \cite[Def.~1.3]{GRS22}.

\begin{prop}\label{AKpresent}
    As a right $\Kcat(z,t)$-module, the affine Kauffman skein category $\AKcat(z,t)$ is generated by an invertible morphism
    \begin{equation} \label{AKdot}
        \Kopendot \colon \Kobj \to \Kobj,
    \end{equation}
    subject to the relations
    \begin{equation} \label{AKmodrels}
        \begin{tikzpicture}[centerzero]
            \draw (-0.25,0) \braidup (0.25,0.5) -- (0.25,0.8);
            \draw[over] (-0.25,-0.8) -- (-0.25,-0.5) \braidup (0.25,0) \braidup (-0.25,0.5) -- (-0.25,0.8);
            \draw[over] (0.25,-0.8) -- (0.25,-0.5) \braidup (-0.25,0);
            \opendot{-0.25,-0.55};
            \opendot{-0.25,0};
        \end{tikzpicture}
        =
        \begin{tikzpicture}[centerzero]
            \draw (-0.25,0) \braidup (0.25,0.5) -- (0.25,0.8);
            \draw[over] (-0.25,-0.8) -- (-0.25,-0.5) \braidup (0.25,0) \braidup (-0.25,0.5) -- (-0.25,0.8);
            \draw[over] (0.25,-0.8) -- (0.25,-0.5) \braidup (-0.25,0);
            \opendot{-0.25,0.5};
            \opendot{-0.25,0};
        \end{tikzpicture}
        \ ,\qquad
        \begin{tikzpicture}[anchorbase]
            \draw (0.25,0.5) to[out=down,in=45] (-0.25,0);
            \draw[over] (0.25,0) to[out=135,in=down] (-0.25,0.5) arc(180:0:0.25);
            \opendot{-0.25,0.5};
        \end{tikzpicture}
        = t^{-1}\,
        \begin{tikzpicture}[anchorbase]
            \draw (-0.25,0) -- (-0.25,0.5) to[out=up,in=up,looseness=1.7] (0.25,0.5) -- (0.25,0);
            \closeddot{-0.25,0.3};
        \end{tikzpicture}
    \end{equation}
    Alternatively, it is generated by an invertible morphism
    \begin{equation} \label{AKdotinv}
        \Kcloseddot \colon \Kobj \to \Kobj,
    \end{equation}
    subject to the relations
    \begin{equation} \label{AKmodrelsinv}
        \begin{tikzpicture}[centerzero]
            \draw (0.25,-0.8) -- (0.25,-0.5) \braidup (-0.25,0);
            \draw[over] (-0.25,-0.8) -- (-0.25,-0.5) \braidup (0.25,0) \braidup (-0.25,0.5) -- (-0.25,0.8);
            \draw[over] (-0.25,0) \braidup (0.25,0.5) -- (0.25,0.8);
            \closeddot{-0.25,-0.5};
            \closeddot{-0.25,0};
        \end{tikzpicture}
        =
        \begin{tikzpicture}[centerzero]
            \draw (0.25,-0.8) -- (0.25,-0.5) \braidup (-0.25,0);
            \draw[over] (-0.25,-0.8) -- (-0.25,-0.5) \braidup (0.25,0) \braidup (-0.25,0.5) -- (-0.25,0.8);
            \draw[over] (-0.25,0) \braidup (0.25,0.5) -- (0.25,0.8);
            \closeddot{-0.25,0.5};
            \closeddot{-0.25,0};
        \end{tikzpicture}
        \ ,\qquad
        \begin{tikzpicture}[anchorbase]
            \draw (0.25,0) to[out=135,in=down] (-0.25,0.5) arc(180:0:0.25);
            \draw[over] (0.25,0.5) to[out=down,in=45] (-0.25,0);
            \closeddot{-0.25,0.5};
        \end{tikzpicture}
        = t\,
        \begin{tikzpicture}[anchorbase]
            \draw (-0.25,0) -- (-0.25,0.5) to[out=up,in=up,looseness=1.7] (0.25,0.5) -- (0.25,0);
            \opendot{-0.25,0.3};
        \end{tikzpicture}
    \end{equation}
\end{prop}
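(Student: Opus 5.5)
The plan is to apply \cref{swamp+} (and \cref{swamp-} for the second presentation) with $\cC=\Kcat(z,t)$ and $\sigma=\id$. Here $S_0=\{\Kobj\}$, and $S_1$ consists of the cap, the cup and the two crossings, so that $S_1'=\{\Kcap,\Kcup\}$. We identify the open dot with the positive dot $\xi_\Kobj$; by \cite[\S8.3]{MS21} this is consistent with the monoidal presentation \cref{Kdotslide}. \Cref{swamp+} then says that $\AKcat(z,t)$ is generated as a right $\Kcat(z,t)$-module by the invertible dot, subject to three relations: the reflection equation for $X=Y=\Kobj$, which is exactly the first relation in \cref{AKmodrels}; the dot-slide relation for the cap, $\xi_\one\circ\Kcap=\Kcap\circ\xi_{\Kobj\otimes\Kobj}$; and the dot-slide relation for the cup, $\xi_{\Kobj\otimes\Kobj}\circ\Kcup=\Kcup\circ\xi_\one$. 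In both slide relations $\xi_\one=1_\one$, and $\xi_{\Kobj\otimes\Kobj}$ is the two-dot diagram from the first equality in \cref{terraform}: a dot, a crossing, a second dot, and the inverse crossing.

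The work is to show that these two slide relations are equivalent, modulo the reflection equation and the relations of $\Kcat(z,t)$, to the single second relation in \cref{AKmodrels}.

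For the cap, start from $\Kcap\circ\xi_{\Kobj\otimes\Kobj}=\Kcap$. Writing out $\xi_{\Kobj\otimes\Kobj}$, the cap absorbs the top inverse crossing via the curl relation in \cref{drops}, producing a factor of $t^{\pm1}$. Moving the lower dot off the left strand and composing with the inverse dot should then give exactly the stated relation: a cap with a crossing beneath it and a dot on its left leg equals $t^{-1}$ times a cap with an inverse dot. I would do this rearrangement carefully, making sure that only the first relation of \cref{terraform}, the crossing relations and the curl relation are used, so that the argument reverses.

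For the cup, I would show that its slide relation follows from the cap relation. Apply the cap relation on one side of a cup and straighten with the zigzag relations in \cref{drops}. Equivalently, both slide relations say that the dot is compatible with the duality $\Kobj\cong\Kobj^*$, and one implies the other by rotation. One subtlety is that, in a module presentation, dots may only appear on the leftmost strand. The rotation must therefore be carried out using cups and caps placed to the right of the dot, which \cref{deer} allows.

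The second presentation follows either by inverting everything, as in \cref{jungleneg}, or directly from \cref{swamp-}. Inverting the reflection equation gives the first relation of \cref{AKmodrelsinv}. For the second, compose the cap relation with inverse dots and the inverse crossing, and use the curl relation in its mirrored form, which contributes $t$ instead of $t^{-1}$.

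The main obstacle is this bookkeeping of crossing types and powers of $t$ in the cap computation, together with checking that the cup relation really is redundant rather than a genuinely new relation.
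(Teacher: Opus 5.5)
Your proposal follows the paper's proof: apply \cref{swamp+} with $S_0=\{\Kobj\}$ and $S_1'=\{\Kcap,\Kcup\}$, convert the cap slide into the second relation of \cref{AKmodrels} via the curl relation in \cref{drops}, discharge the cup slide by rotating with cups and caps (legitimate in the module setting because the dot stays on the leftmost strand throughout), and obtain the inverse-dot presentation from \cref{swamp-}. One bookkeeping correction: both crossings in the two-dot diagram for $\xi_{\Kobj\otimes\Kobj}$ are $\negcross$, not a crossing followed by its inverse, and it is $\Kcap\circ\negcross=t\,\Kcap$ that yields exactly the stated relation with $\negcross$ under the cap.
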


\begin{proof}
    By \cref{swamp+}, writing $\Kopendot$ instead of
    \(
        \begin{tikzpicture}[centerzero]
            \draw (0,-0.25) -- (0,0.25);
            \posdot{0,0};
        \end{tikzpicture}
    \),
    $\AKcat(z,t)$ is generated by \cref{AKdot}, subject to the first relation in \cref{AKmodrels} and the second relation in \cref{life} for $f \in \{\Kcup, \Kcap\}$.  Taking $f = \Kcap$ and using the first equality in \cref{terraform} to define the dot on $\Kobj \otimes \Kobj$, the second relation in \cref{life} is
    \[
        \begin{tikzpicture}[centerzero]
            \draw (-0.25,0) \braidup (0.25,0.5) arc(0:180:0.25);
            \draw[over] (-0.25,-0.8) -- (-0.25,-0.5) \braidup (0.25,0) \braidup (-0.25,0.5);
            \draw[over] (0.25,-0.8) -- (0.25,-0.5) \braidup (-0.25,0);
            \opendot{-0.25,-0.55};
            \opendot{-0.25,0};
        \end{tikzpicture}
        =
        \begin{tikzpicture}[anchorbase]
            \draw (-0.3,0) -- (-0.3,0.5) to[out=up,in=up,looseness=1.7] (0.3,0.5) -- (0.3,0);
        \end{tikzpicture}
        \ \overset{\cref{drops}}{\Longleftrightarrow}\  t\
        \begin{tikzpicture}[anchorbase]
            \draw (-0.25,-0.8) -- (-0.25,-0.5) \braidup (0.25,0) arc(0:180:0.25);
            \draw[over] (0.25,-0.8) -- (0.25,-0.5) \braidup (-0.25,0);
            \opendot{-0.25,-0.55};
            \opendot{-0.25,0};
        \end{tikzpicture}
        =
        \begin{tikzpicture}[anchorbase]
            \draw (-0.3,0) -- (-0.3,0.5) to[out=up,in=up,looseness=1.7] (0.3,0.5) -- (0.3,0);
        \end{tikzpicture}
        \ \Longleftrightarrow\
        \begin{tikzpicture}[anchorbase]
            \draw (0.25,0.5) to[out=down,in=45] (-0.25,0);
            \draw[over] (0.25,0) to[out=135,in=down] (-0.25,0.5) arc(180:0:0.25);
            \opendot{-0.25,0.5};
        \end{tikzpicture}
        = t^{-1}\,
        \begin{tikzpicture}[anchorbase]
            \draw (-0.25,0) -- (-0.25,0.5) to[out=up,in=up,looseness=1.7] (0.25,0.5) -- (0.25,0);
            \closeddot{-0.25,0.3};
        \end{tikzpicture}
        \ .
    \]
    On the other hand, taking $f = \Kcup$, the second relation in \cref{life} is
    \[
        \begin{tikzpicture}[centerzero]
            \draw (-0.25,0) \braidup (0.25,0.5) -- (0.25,0.8);
            \draw[over] (0.25,-0.5) arc(360:180:0.25) \braidup (0.25,0) \braidup (-0.25,0.5) -- (-0.25,0.8);
            \draw[over] (0.25,-0.5) \braidup (-0.25,0);
            \opendot{-0.25,0.5};
            \opendot{-0.25,0};
        \end{tikzpicture}
        =
        \begin{tikzpicture}[anchorbase]
            \draw (-0.3,0) -- (-0.3,-0.5) to[out=down,in=down,looseness=1.7] (0.3,-0.5) -- (0.3,0);
        \end{tikzpicture}
        \ \overset{\cref{drops}}{\Longleftrightarrow}\  t\
        \begin{tikzpicture}[anchorbase]
            \draw (-0.25,0) \braidup (0.25,0.5) -- (0.25,0.8);
            \draw[over] (-0.25,0) arc(180:360:0.25) \braidup (-0.25,0.5) -- (-0.25,0.8);
            \opendot{-0.25,0.5};
            \opendot{-0.25,0};
        \end{tikzpicture}
        =
        \begin{tikzpicture}[anchorbase]
            \draw (-0.3,0) -- (-0.3,-0.5) to[out=down,in=down,looseness=1.7] (0.3,-0.5) -- (0.3,0);
        \end{tikzpicture}
        \ \Longleftrightarrow\
        \begin{tikzpicture}[anchorbase]
            \draw (0.25,0) to[out=-135,in=up] (-0.25,-0.5) arc(180:360:0.25);
            \draw[over] (0.25,-0.5) to[out=up,in=-45] (-0.25,0);
            \opendot{-0.25,-0.5};
        \end{tikzpicture}
        = t^{-1}\,
        \begin{tikzpicture}[anchorbase]
            \draw (-0.25,0) -- (-0.25,-0.5) to[out=down,in=down,looseness=1.7] (0.25,-0.5) -- (0.25,0);
            \closeddot{-0.25,-0.3};
        \end{tikzpicture}
        \ ,
    \]
    which is equivalent to the last relation in \cref{AKmodrels} using caps and cups to rotate diagrams.
    
    The proof of the alternate presentation is analogous, using \cref{swamp-}.
\end{proof}

\subsection{The affine oriented skein category\label{SS:AOScat}}

The \emph{framed HOMFLYPT skein category}, or \emph{oriented skein category} for short, $\OScat(z,t)$, is the $\kk$-linear strict monoidal category generated by objects $\upobj$, $\downobj$ and morphisms
\begin{gather*}
    \posupcross,\ \negupcross \colon \upobj \otimes \upobj \to \upobj \otimes \upobj,
    \\
    \rightcup \colon \one \to \downobj \otimes \upobj,\quad
    \rightcap \colon \upobj \otimes \downobj \to \one,\quad
    \leftcup \colon \one \to \upobj \otimes \downobj,\quad
    \leftcap \colon \downobj \otimes \upobj \to \one,
\end{gather*}
subject to the relations
\begin{gather} \label{obraid}

    \ .
\end{gather}
In \cref{obraid}, we have used the following morphisms:
\begin{equation} \label{lego}
    \posrightcross :=
    %
    \ .
\end{equation}

This category was first introduced in \cite[\S5.2]{Tur89}, where it was called the \emph{Hecke category}. In \cite[Def.~2.1]{QS19} it is called the \emph{quantized oriented Brauer category}.  It was studied in depth in \cite{Bru17}.

We define the following morphisms:
\begin{equation} \label{windmill}
    \negrightcross:=
    %
    \ .
\end{equation}
The category $\OScat(z,t)$ is strict pivotal.  Furthermore, the following relations hold for any orientation of the strands:
\begin{equation} \label{venom1}
    %
    \ ,\quad
    \unbub=\frac{t-t^{-1}}{z}.
\end{equation}
We also have
\begin{equation} \label{ocurlinv}
    %
    \ .
\end{equation}

As shown in \cite[\S7.3]{MS21}, the affinization of $\OScat(z,t)$, called the \emph{affine oriented skein category} $\AOScat(z,t) = \Aff(\OScat(z,t))$, is the $\kk$-linear strict monoidal category obtained from $\OScat(z,t)$ by adjoining an invertible morphism
\[
    \upopendot = \upposdot \colon \upobj \to \upobj,
\]
which we call a \emph{dot}, subject to the relation
\begin{equation} \label{turkey}
    %
    \ .
\end{equation}
We let $\upcloseddot = \upnegdot$ denote the inverse of the dot, so that
\begin{equation} \label{lime}
    %
    \ .
\end{equation}
We define the downward dots by
\begin{equation} \label{cranberry}
    %
    \ ,
\end{equation}
using \cite[(4.8)]{MS21}.

\begin{prop} \label{AOSpresent}
    As a right $\OScat(z,t)$-module, the affine oriented skein category $\AOScat(z,t)$ is generated by an invertible morphism
    \[
        \upopendot \colon \upobj \to \upobj,
    \]
    subject to the relation
    \begin{equation} \label{AOSreflect}
        %
        \ .
    \end{equation}
    Alternatively, it is generated by an invertible morphism
    \[
        \upcloseddot \colon \upobj \to \upobj,
    \]
    subject to the relation
    \[
        %
        \ .
    \]
\end{prop}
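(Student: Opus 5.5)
The plan is to apply \cref{swamp+} with $\sigma=\id$, $S_0=\{\upobj,\downobj\}$, and $S_1$ the generating morphisms of $\OScat(z,t)$. Then $S_1'$ consists of the four cups and caps. This presents $\AOScat(z,t)$ as a right $\OScat(z,t)$-module by two invertible dots, one on $\upobj$ and one on $\downobj$, subject to the following relations:
\begin{itemize}
    \item the reflection equation \cref{life} for the four pairs $(X,Y)\in S_0^2$;
    \item naturality of the dot with respect to the four cups and caps.
\end{itemize}
The task is to eliminate the downward dot generator and show that all remaining relations follow from \cref{AOSreflect}, which I expect is the reflection equation for the pair $(\upobj,\upobj)$.

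First I use naturality with respect to the cups and caps to eliminate the downward dot. Consider naturality for a cap such as $\rightcap\colon\upobj\otimes\downobj\to\one$. The dot on $\upobj\otimes\downobj$ is given by the first equality in \cref{terraform}, that is, the upward dot, then a crossing with the downward strand carrying the downward dot, then a crossing back. Naturality says this composite followed by the cap equals the cap. Using the rightward crossings \cref{lego}, pivotality, and the curl relations \cref{ocurlinv}, I rewrite this as an equation expressing the downward dot, placed to the right of a module-position object, in terms of the upward dot and a cup and cap. This should match the definition \cref{cranberry} from \cite[(4.8)]{MS21}. So one naturality relation, say for $\rightcap$, is used to \emph{define} the downward dot. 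The downward dot is then invertible automatically, since its inverse is the analogous expression with $\upcloseddot$.

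Second, I check that the remaining relations follow from \cref{AOSreflect} together with this definition. These are:
\begin{itemize}
    \item naturality for the other three cups and caps;
    \item the reflection equations for $(\upobj,\downobj)$, $(\downobj,\upobj)$ and $(\downobj,\downobj)$.
\end{itemize}
My strategy is to substitute \cref{cranberry} and slide the reflection equation for $(\upobj,\upobj)$ around cups and caps. Each mixed reflection equation should become a rotation of \cref{AOSreflect}, by bending the downward strand into an upward one with cups and caps and using the zigzag relations and braid moves. Naturality for the partner cap $\leftcap$, and for the two cups, should reduce by rotation to the defining relation together with the invertibility of curls.

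The main obstacle is the rotation argument. In the module category the dots may only sit on the leftmost strand in the a priori sense, so bending strands with cups and caps must be done on the right of the module strand. The double crossing in the reflection equation then interacts with cups and caps through \cref{obraid}. Curl factors $t^{\pm1}$ will appear in this step, and I must check that they cancel. I would first handle the $(\downobj,\downobj)$ reflection equation, which needs two bendings, as a test case, tracking framing factors carefully.

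The alternative presentation by $\upcloseddot$ follows in the same way from \cref{swamp-}, or simply by inverting the generator and the relation.
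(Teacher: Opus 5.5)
Your strategy is the paper's. You present $\AOScat(z,t)$ through \cref{swamp+} with dots on $\upobj$ and $\downobj$, and you trade the downward dot for a curl whose leftmost, upward-oriented segment carries an upward dot; this is the paper's definition \cref{tornado}. You then obtain the remaining relations by bending strands to the right of the module strand and applying \cref{AOSreflect} or its conjugated and inverted forms.

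The step that fails as stated is your claim that the eliminated generator is ``invertible automatically''. The \cref{swamp+} presentation has an \emph{invertible} downward generator. To define a functor out of it, or equivalently to complete the Tietze elimination, you must show in the one-generator category that two curls are mutually inverse: the $t^{-1}$-weighted curl carrying $\upcloseddot$, and the $t$-weighted curl with reversed crossing carrying $\upopendot$. This is not formal. In the stacked curls the two dots sit on different loops at the module position, and bringing them together needs exactly the commutation that the reflection equation supplies. The paper gets this by composing \cref{AOSreflect} with $\upcloseddot$ on top and bottom to obtain \cref{lemon}, together with the analogous identity from the inverse of \cref{AOSreflect}. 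The naturality checks for the other cups and caps then use this invertibility, so it must be established before the rest of your verification.

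Two smaller points:
\begin{enumerate}
\item Since $\xi_{\downobj}$ is monoidally the mate of $\xi_{\upobj}^{-1}$, the $\rightcap$-naturality relation expresses the downward positive dot through $\upcloseddot$, not $\upopendot$.
\item The mate formula \cref{cranberry} is not directly a morphism of the module presentation, because it places a dot to the right of a $\downobj$ strand. What you actually extract is the curl.
\end{enumerate}

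Deriving that curl from the $\rightcap$-naturality relation, instead of positing it as the paper does, has a real advantage. That relation, and the validity of the curl formula in $\AOScat(z,t)$, then come for free, whereas the paper verifies both separately.

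For the $(\downobj,\downobj)$ reflection equation, the paper avoids your double bending. It first conjugates the already proved $(\upobj,\downobj)$ equation by $\upcloseddot$ and then bends once.
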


\begin{proof}
    Let $\Dcat$ denote the category presented as in the statement of the proposition.  We draw its string diagrams in cyan to distinguish them from diagrams in $\AOScat(z,t)$.  We define
    \begin{equation}\label{tornado}
        \upposdot[present] := \upopendot[present]
        \ ,\qquad
        \upnegdot[present] := \left( \upopendot[present] \right)^{-1} = \upcloseddot[present]
        \ ,\qquad
        \downposdot[present] :=
        t^{-1}\,
        %
    \end{equation}
    Composing with $\upcloseddot \upstrand$ on the top and bottom of \cref{AOSreflect} and cancelling the inverse dots gives
    \begin{equation}\label{lemon}
        %
        \ .
    \end{equation}
    The same calculation with all crossings reversed, open and closed dots interchanged, and $t$ replaced by $t^{-1}$ uses the relation obtained by taking the inverses of both sides of \cref{AOSreflect} and gives
    \begin{equation}\label{cato2}
        %
        \ .
    \end{equation}
    Thus, $\downnegdot[present] = \left( \downposdot[present] \right)^{-1}$.
    
    By \cref{swamp+}, $\AOScat(z,t)$ is generated as a right $\OScat(z,t)$-module by invertible morphisms
    \[
        \upposdot \colon \upobj \to \upobj
        \qquad \text{and} \qquad
        \downposdot \colon \downobj \to \downobj
    \]
    subject to the relations
    \begin{equation} \label{alarm}
        %
        \ ,\qquad
    \end{equation}
    and the second relation in \cref{life} for $f \in \{\rightcap, \rightcup, \leftcap, \leftcup\}$.  We claim that there is an isomorphism of $\OScat(z,t)$-module categories
    \[
        \bF \colon \AOScat(z,t) \to \Dcat,\qquad
        \upposdot \mapsto \upposdot[present]\, ,\qquad
        \downposdot \mapsto \downposdot[present]\, .
    \]
    To show that $\bF$ is well defined, we must show that the cyan versions of the relations \cref{alarm} hold and that, for each $f \in \{\rightcap[present], \rightcup[present], \leftcap[present], \leftcup[present]\}$, the cyan version of the second relation in \cref{life} holds.
    
    When $f = \rightcap[present]$, 
    \[
        \begin{tikzpicture}[anchorbase,present]
            \draw (0,-0.7) -- (0,0.7);
            \posdot{0,-0.35};
            \coupon{0,0.25}{f};
        \end{tikzpicture}
        =
        \begin{tikzpicture}[centerzero,present]
            \draw (-0.25,0) \braidup (0.25,0.5) arc(0:180:0.25);
            \draw[over] (-0.25,-0.8) -- (-0.25,-0.5) \braidup (0.25,0) \braidup (-0.25,0.5);
            \draw[over,<-] (0.25,-0.8) -- (0.25,-0.5) \braidup (-0.25,0);
            \posdot{-0.25,-0.55};
            \posdot{-0.25,0};
        \end{tikzpicture}
        \ceqref{ocurlbub} t\
        \begin{tikzpicture}[anchorbase,present]
            \draw (-0.25,-0.8) -- (-0.25,-0.5) \braidup (0.25,0) arc(0:180:0.25);
            \draw[over,<-] (0.25,-0.8) -- (0.25,-0.5) \braidup (-0.25,0);
            \posdot{-0.25,-0.55};
            \posdot{-0.25,0};
        \end{tikzpicture}
        \ceqref{tornado}
        \begin{tikzpicture}[anchorbase,present]
            \draw[->] (-0.25,-0.8) -- (-0.25,0) arc(180:0:0.25) -- (0.25,-0.8);
            \opendot{-0.25,-0.5};
            \closeddot{-0.25,-0.1};
        \end{tikzpicture}
        =
        \begin{tikzpicture}[anchorbase,present]
            \draw[->] (-0.25,-0.4) -- (-0.25,0) arc(180:0:0.25) -- (0.25,-0.4);
        \end{tikzpicture}
        =
        \begin{tikzpicture}[anchorbase,present]
            \draw (0,-0.7) -- (0,0.7);
            \posdot{0,0.35};
            \coupon{0,-0.25}{f};
        \end{tikzpicture}
        \ ,
    \]
    proving the second relation in \cref{life}.  The proofs for $f \in \{\rightcup[present], \leftcap[present], \leftcup[present]\}$ are analogous.
    \details{
        When $f = \rightcup$,
        \[
            \begin{tikzpicture}[anchorbase,present]
                \draw (0,-0.7) -- (0,0.7);
                \posdot{0,-0.35};
                \coupon{0,0.25}{f};
            \end{tikzpicture}
            =
            \begin{tikzpicture}[centerzero,present]
                \draw[->] (-0.25,0) \braidup (0.25,0.5) -- (0.25,0.8);
                \draw[over] (0.25,-0.5) arc(360:180:0.25) \braidup (0.25,0) \braidup (-0.25,0.5) -- (-0.25,0.8);
                \draw[over] (0.25,-0.5) \braidup (-0.25,0);
                \posdot{-0.25,0.5};
                \posdot{-0.25,0};
            \end{tikzpicture}
            \ceqref{ocurlbub}
            t\
            \begin{tikzpicture}[anchorbase,present]
                \draw[->] (-0.25,0) \braidup (0.25,0.5) -- (0.25,0.8);
                \draw[over] (-0.25,0) arc(180:360:0.25) \braidup (0.25,0) \braidup (-0.25,0.5) -- (-0.25,0.8);
                \posdot{-0.25,0.5};
                \posdot{-0.25,0};
            \end{tikzpicture}
            \ceqref{tornado}
            \begin{tikzpicture}[anchorbase,present]
                \draw[->] (-0.25,0.8) -- (-0.25,0) arc(180:360:0.25) -- (0.25,0.8);
                \negdot{-0.25,0.1};
                \posdot{-0.25,0.5};
            \end{tikzpicture}
            \ceqref{cato1}
            \begin{tikzpicture}[anchorbase,present]
                \draw[->] (-0.25,0.4) -- (-0.25,0) arc(180:360:0.25) -- (0.25,0.4);
            \end{tikzpicture}
            =
            \begin{tikzpicture}[anchorbase,present]
                \draw (0,-0.7) -- (0,0.7);
                \posdot{0,0.35};
                \coupon{0,-0.25}{f};
            \end{tikzpicture}
            \ .
        \]
        When $f = \leftcap\,$,
        \[
            \begin{tikzpicture}[anchorbase,present]
                \draw (0,-0.7) -- (0,0.7);
                \posdot{0,-0.35};
                \coupon{0,0.25}{f};
            \end{tikzpicture}
            =
            \begin{tikzpicture}[centerzero,present]
                \draw (-0.25,0) \braidup (0.25,0.5) arc(0:180:0.25);
                \draw[over,<-] (-0.25,-0.8) -- (-0.25,-0.5) \braidup (0.25,0) \braidup (-0.25,0.5);
                \draw[over] (0.25,-0.8) -- (0.25,-0.5) \braidup (-0.25,0);
                \posdot{-0.25,-0.55};
                \posdot{-0.25,0};
            \end{tikzpicture}
            \ceqref{ocurlbub} t\
            \begin{tikzpicture}[anchorbase,present]
                \draw[<-] (-0.25,-0.8) -- (-0.25,-0.5) \braidup (0.25,0) arc(0:180:0.25);
                \draw[over] (0.25,-0.8) -- (0.25,-0.5) \braidup (-0.25,0);
                \posdot{-0.25,-0.55};
                \posdot{-0.25,0};
            \end{tikzpicture}
            \ceqref{tornado}
            \begin{tikzpicture}[anchorbase,present]
                \draw[<-] (-0.25,-0.8) -- (-0.25,0) arc(180:0:0.25) -- (0.25,-0.8);
                \posdot{-0.25,-0.5};
                \negdot{-0.25,-0.1};
            \end{tikzpicture}
            \ceqref{cato2}
            \begin{tikzpicture}[anchorbase,present]
                \draw[<-] (-0.25,-0.4) -- (-0.25,0) arc(180:0:0.25) -- (0.25,-0.4);
            \end{tikzpicture}
            =
            \begin{tikzpicture}[anchorbase,present]
                \draw (0,-0.7) -- (0,0.7);
                \posdot{0,0.35};
                \coupon{0,-0.25}{f};
            \end{tikzpicture}
            \ .
        \]
        When $f = \leftcup\,$,
        \[
            \begin{tikzpicture}[anchorbase,present]
                \draw (0,-0.7) -- (0,0.7);
                \posdot{0,-0.35};
                \coupon{0,0.25}{f};
            \end{tikzpicture}
            =
            \begin{tikzpicture}[centerzero,present]
                \draw (-0.25,0) \braidup (0.25,0.5) -- (0.25,0.8);
                \draw[over,->] (0.25,-0.5) arc(360:180:0.25) \braidup (0.25,0) \braidup (-0.25,0.5) -- (-0.25,0.8);
                \draw[over] (0.25,-0.5) \braidup (-0.25,0);
                \posdot{-0.25,0.5};
                \posdot{-0.25,0};
            \end{tikzpicture}
            \ceqref{ocurlbub}
            t\
            \begin{tikzpicture}[anchorbase,present]
                \draw (-0.25,0) \braidup (0.25,0.5) -- (0.25,0.8);
                \draw[over,->] (-0.25,0) arc(180:360:0.25) \braidup (0.25,0) \braidup (-0.25,0.5) -- (-0.25,0.8);
                \posdot{-0.25,0.5};
                \posdot{-0.25,0};
            \end{tikzpicture}
            \ceqref{tornado}
            \begin{tikzpicture}[anchorbase,present]
                \draw[<-] (-0.25,0.8) -- (-0.25,0) arc(180:360:0.25) -- (0.25,0.8);
                \negdot{-0.25,0.1};
                \posdot{-0.25,0.5};
            \end{tikzpicture}
            =
            \begin{tikzpicture}[anchorbase,present]
                \draw[<-] (-0.25,0.4) -- (-0.25,0) arc(180:360:0.25) -- (0.25,0.4);
            \end{tikzpicture}
            =
            \begin{tikzpicture}[anchorbase,present]
                \draw (0,-0.7) -- (0,0.7);
                \posdot{0,0.35};
                \coupon{0,-0.25}{f};
            \end{tikzpicture}
            \ .
        \]
    }
    
    The first relation in \cref{alarm} is \cref{AOSreflect}.  For the second relation in \cref{alarm}, we have
    \[
        \begin{tikzpicture}[centerzero,present]
            \draw (-0.25,0) \braidup (0.25,0.5) -- (0.25,0.8);
            \draw[over,->] (-0.25,-0.8) -- (-0.25,-0.5) \braidup (0.25,0) \braidup (-0.25,0.5) -- (-0.25,0.8);
            \draw[over,<-] (0.25,-0.8) -- (0.25,-0.5) \braidup (-0.25,0);
            \posdot{-0.25,-0.55};
            \posdot{-0.25,0};
        \end{tikzpicture}
        \ceqref{tornado}
        \begin{tikzpicture}[centerzero,present]
            \draw (-0.25,0) \braidup (0.25,0.5) -- (0.25,0.8);
            \draw[over,->] (-0.25,-0.8) -- (-0.25,-0.5) \braidup (0.25,0) \braidup (-0.25,0.5) -- (-0.25,0.8);
            \draw[over,<-] (0.25,-0.8) -- (0.25,-0.5) \braidup (-0.25,0);
            \opendot{-0.25,-0.55};
            \closeddot{-0.25,0};
        \end{tikzpicture}
        \ceqsref{tornado}{venom2} t^{-1}\ 
        \begin{tikzpicture}[centerzero,present]
            \draw (1.05,1.15) -- (1.05,-0.25) to[out=down,in=0] (0.55,-0.9) to[out=180,in=down] (0.25,-0.7) -- (0.25,-0.5);
            \draw (0.25,-0.5) \braidup (-0.25,0);
            \draw[over,->] (-0.25,-1.15) -- (-0.25,-0.5) \braidup (0.25,0) \braidup (-0.25,0.5) -- (-0.25,1.15);
            \draw[over,->] (-0.25,0) \braidup (0.25,0.5) -- (0.25,0.7) to[out=up,in=up,looseness=1.7] (0.75,0.7) -- (0.75,-1.15);
            \opendot{-0.25,-0.7};
            \closeddot{-0.25,0};
        \end{tikzpicture}
        \ceqref{lemon} t^{-1}\ 
        \begin{tikzpicture}[centerzero,present]
            \draw (1.05,1.15) -- (1.05,-0.25) to[out=down,in=0] (0.55,-0.9) to[out=180,in=down] (0.25,-0.7) -- (0.25,-0.5);
            \draw (0.25,-0.5) \braidup (-0.25,0);
            \draw[over,->] (-0.25,-1.15) -- (-0.25,-0.5) \braidup (0.25,0) \braidup (-0.25,0.5) -- (-0.25,1.15);
            \draw[over,->] (-0.25,0) \braidup (0.25,0.5) -- (0.25,0.7) to[out=up,in=up,looseness=1.7] (0.75,0.7) -- (0.75,-1.15);
            \opendot{-0.25,0.7};
            \closeddot{-0.25,0};
        \end{tikzpicture}
        \ceqsref{venom2}{tornado}
        \begin{tikzpicture}[centerzero,present]
            \draw (-0.25,0) \braidup (0.25,0.5) -- (0.25,0.8);
            \draw[over,->] (-0.25,-0.8) -- (-0.25,-0.5) \braidup (0.25,0) \braidup (-0.25,0.5) -- (-0.25,0.8);
            \draw[over,<-] (0.25,-0.8) -- (0.25,-0.5) \braidup (-0.25,0);
            \opendot{-0.25,0.55};
            \closeddot{-0.25,0};
        \end{tikzpicture}
        \ceqref{tornado}
        \begin{tikzpicture}[centerzero,present]
            \draw (-0.25,0) \braidup (0.25,0.5) -- (0.25,0.8);
            \draw[over,->] (-0.25,-0.8) -- (-0.25,-0.5) \braidup (0.25,0) \braidup (-0.25,0.5) -- (-0.25,0.8);
            \draw[over,<-] (0.25,-0.8) -- (0.25,-0.5) \braidup (-0.25,0);
            \posdot{-0.25,0.5};
            \posdot{-0.25,0};
        \end{tikzpicture}
        \ .
    \]
    Here the middle equality uses the relation obtained by taking inverses of both sides of \cref{lemon}; its crossings are reversed, its middle dot is closed, and its other dot is open.

    For the third relation in \cref{alarm}, we use \cref{venom1,venom2} to move the cups and caps past the crossings, obtaining
    \[
        \begin{tikzpicture}[centerzero,present]
            \draw[->] (-0.25,0) \braidup (0.25,0.5) -- (0.25,0.8);
            \draw[over,<-] (-0.25,-0.8) -- (-0.25,-0.5) \braidup (0.25,0) \braidup (-0.25,0.5) -- (-0.25,0.8);
            \draw[over] (0.25,-0.8) -- (0.25,-0.5) \braidup (-0.25,0);
            \posdot{-0.25,-0.55};
            \posdot{-0.25,0};
        \end{tikzpicture}
        \ceqref{tornado}
        \begin{tikzpicture}[centerzero,present]
            \draw[->] (-0.25,0) \braidup (0.25,0.5) -- (0.25,0.8);
            \draw[over,<-] (-0.25,-0.8) -- (-0.25,-0.5) \braidup (0.25,0) \braidup (-0.25,0.5) -- (-0.25,0.8);
            \draw[over] (0.25,-0.8) -- (0.25,-0.5) \braidup (-0.25,0);
            \closeddot{-0.25,-0.55};
            \opendot{-0.25,0};
        \end{tikzpicture}
        \ceqref{tornado} t^{-1}\ 
        \begin{tikzpicture}[centerzero,present]
            \draw[->] (0,0) \braidup (1.2,1) -- (1.2,1.4);
            \draw (0.8,-0.8) to[out=down,in=down,looseness=1.2] (0,-0.8) -- (0,-0.6);
            \draw[over] (0.8,1.4) -- (0.8,1) \braiddown (1.2,0) \braiddown (0.8,-0.8);
            \draw[over,->] (0,-0.6) -- (0,-0.4) \braidup (0.4,0) \braidup (0,0.4) -- (0,0.75) arc(180:0:0.2) \braiddown (0.8,0) \braiddown (0.4,-0.75) -- (0.4,-1.6);
            \draw[over] (1.2,-1.6) -- (1.2,-1) \braidup (0,0);
            \closeddot{0,-0.6};
            \opendot{0,0};
        \end{tikzpicture}
        \ceqref{lemon} t^{-1}\ 
        \begin{tikzpicture}[centerzero,present]
            \draw[->] (0,0) \braidup (1.2,1) -- (1.2,1.4);
            \draw (0.8,-0.8) to[out=down,in=down,looseness=1.2] (0,-0.8) -- (0,-0.6);
            \draw[over] (0.8,1.4) -- (0.8,1) \braiddown (1.2,0) \braiddown (0.8,-0.8);
            \draw[over,->] (0,-0.6) -- (0,-0.4) \braidup (0.4,0) \braidup (0,0.4) -- (0,0.75) arc(180:0:0.2) \braiddown (0.8,0) \braiddown (0.4,-0.75) -- (0.4,-1.6);
            \draw[over] (1.2,-1.6) -- (1.2,-1) \braidup (0,0);
            \closeddot{0,0.6};
            \opendot{0,0};
        \end{tikzpicture}
        \ceqref{tornado}
        \begin{tikzpicture}[centerzero,present]
            \draw[->] (-0.25,0) \braidup (0.25,0.5) -- (0.25,0.8);
            \draw[over,<-] (-0.25,-0.8) -- (-0.25,-0.5) \braidup (0.25,0) \braidup (-0.25,0.5) -- (-0.25,0.8);
            \draw[over] (0.25,-0.8) -- (0.25,-0.5) \braidup (-0.25,0);
            \closeddot{-0.25,0.55};
            \opendot{-0.25,0};
        \end{tikzpicture}
        \ceqref{tornado}
        \begin{tikzpicture}[centerzero,present]
            \draw[->] (-0.25,0) \braidup (0.25,0.5) -- (0.25,0.8);
            \draw[over,<-] (-0.25,-0.8) -- (-0.25,-0.5) \braidup (0.25,0) \braidup (-0.25,0.5) -- (-0.25,0.8);
            \draw[over] (0.25,-0.8) -- (0.25,-0.5) \braidup (-0.25,0);
            \posdot{-0.25,0.5};
            \posdot{-0.25,0};
        \end{tikzpicture}
        \ .
    \]
 
    To prove the fourth relation in \cref{alarm}, first compose the second relation in \cref{alarm}, which we have already proved, with $\upcloseddot[present] \downstrand[present]$ on the top and bottom.  Cancelling inverse dots gives
    \begin{equation} \label{lemon2}
        \begin{tikzpicture}[centerzero,present]
            \draw (-0.25,0) \braidup (0.25,0.5) -- (0.25,0.8);
            \draw[over,->] (-0.25,-0.8) -- (-0.25,-0.5) \braidup (0.25,0) \braidup (-0.25,0.5) -- (-0.25,0.8);
            \draw[over,<-] (0.25,-0.8) -- (0.25,-0.5) \braidup (-0.25,0);
            \closeddot{-0.25,-0.55};
            \closeddot{-0.25,0};
        \end{tikzpicture}
        =
        \begin{tikzpicture}[centerzero,present]
            \draw (-0.25,0) \braidup (0.25,0.5) -- (0.25,0.8);
            \draw[over,->] (-0.25,-0.8) -- (-0.25,-0.5) \braidup (0.25,0) \braidup (-0.25,0.5) -- (-0.25,0.8);
            \draw[over,<-] (0.25,-0.8) -- (0.25,-0.5) \braidup (-0.25,0);
            \closeddot{-0.25,0.55};
            \closeddot{-0.25,0};
        \end{tikzpicture}
        \ .
    \end{equation}
    Thus,
    \[
        \begin{tikzpicture}[centerzero,present]
            \draw (-0.25,0) \braidup (0.25,0.5) -- (0.25,0.8);
            \draw[over,<-] (-0.25,-0.8) -- (-0.25,-0.5) \braidup (0.25,0) \braidup (-0.25,0.5) -- (-0.25,0.8);
            \draw[over,<-] (0.25,-0.8) -- (0.25,-0.5) \braidup (-0.25,0);
            \posdot{-0.25,-0.55};
            \posdot{-0.25,0};
        \end{tikzpicture}
        \ceqref{tornado} t^{-1}\ 
        \begin{tikzpicture}[centerzero,present]
            \draw (0,0) \braidup (1.2,1) -- (1.2,1.4);
            \draw (0.8,-0.8) to[out=down,in=down,looseness=1.2] (0,-0.8) -- (0,-0.6);
            \draw[over] (0.8,1.4) -- (0.8,1) \braiddown (1.2,0) \braiddown (0.8,-0.8);
            \draw[over,->] (0,-0.6) \braidup (0.4,0) \braidup (0,0.6) -- (0,0.75) arc(180:0:0.2) \braiddown (0.8,0) \braiddown (0.4,-0.75) -- (0.4,-1.6);
            \draw[over,<-] (1.2,-1.6) -- (1.2,-1) \braidup (0,0);
            \closeddot{0,-0.6};
            \closeddot{0,0};
        \end{tikzpicture}
        \ceqref{lemon2} t^{-1}\ 
        \begin{tikzpicture}[centerzero,present]
            \draw (0,0) \braidup (1.2,1) -- (1.2,1.4);
            \draw (0.8,-0.8) to[out=down,in=down,looseness=1.2] (0,-0.8) -- (0,-0.6);
            \draw[over] (0.8,1.4) -- (0.8,1) \braiddown (1.2,0) \braiddown (0.8,-0.8);
            \draw[over,->] (0,-0.6) \braidup (0.4,0) \braidup (0,0.6) -- (0,0.75) arc(180:0:0.2) \braiddown (0.8,0) \braiddown (0.4,-0.75) -- (0.4,-1.6);
            \draw[over,<-] (1.2,-1.6) -- (1.2,-1) \braidup (0,0);
            \closeddot{0,0.6};
            \closeddot{0,0};
        \end{tikzpicture}
        \ceqref{tornado}
        \begin{tikzpicture}[centerzero,present]
            \draw (-0.25,0) \braidup (0.25,0.5) -- (0.25,0.8);
            \draw[over,<-] (-0.25,-0.8) -- (-0.25,-0.5) \braidup (0.25,0) \braidup (-0.25,0.5) -- (-0.25,0.8);
            \draw[over,<-] (0.25,-0.8) -- (0.25,-0.5) \braidup (-0.25,0);
            \posdot{-0.25,0.5};
            \posdot{-0.25,0};
        \end{tikzpicture}
        \ .
    \]
    Thus the cyan versions of all the relations in \cref{alarm} hold.
    
    Conversely, sending the generating dot $\upopendot[present]$ of $\Dcat$ to $\upopendot$ defines a right $\OScat(z,t)$-module functor $\bG \colon \Dcat \to \AOScat(z,t)$, since \cref{AOSreflect} holds in $\AOScat(z,t)$. The composite $\bF\bG$ fixes the generating dot of $\Dcat$ and all morphisms of $\OScat(z,t)$, so $\bF\bG=\id_{\Dcat}$.

    To check the other composite, note that the formula for the downward positive dot in \cref{tornado} also holds in $\AOScat(z,t)$, with all diagrams drawn in black:
    \[
        t^{-1}
        \begin{tikzpicture}[centerzero]
            \draw (-0.4,0) to[out=down,in=180] (-0.25,-0.15) to[out=0,in=down] (0,0.4);
            \draw[over,<-] (0,-0.4) to[out=up,in=0] (-0.25,0.15) to[out=180,in=up] (-0.4,0);
            \closeddot{-0.4,0};
        \end{tikzpicture}
        \ceqsref{cranberry}{lime} t^{-1}
        \begin{tikzpicture}[centerzero]
            \draw[<-] (0,-0.4) to[out=up,in=0] (-0.25,0.15) to[out=180,in=up] (-0.4,0);
            \draw[over] (-0.4,0) to[out=down,in=180] (-0.25,-0.15) to[out=0,in=down] (0,0.4);
            \closeddot{0,0.2};
        \end{tikzpicture}
        \ceqref{ocurlbub}
        \begin{tikzpicture}[centerzero]
            \draw[<-] (0,-0.4) -- (0,0.4);
            \closeddot{0,0};
        \end{tikzpicture}
        \ceqref{cranberry}
        \begin{tikzpicture}[centerzero]
            \draw[<-] (0,-0.4) -- (0,0.4);
            \posdot{0,0};
        \end{tikzpicture}
        \ .
    \]
    Since $\bG\bF$ fixes the upward dot, its inverse, and all morphisms of $\OScat(z,t)$, this formula shows that it also fixes the downward positive dot. Thus $\bG\bF$ fixes both module generators in the presentation supplied by \cref{swamp+}, and hence $\bG\bF=\id_{\AOScat(z,t)}$. Therefore $\bF$ and $\bG$ are mutually inverse.
        
    The alternate presentation is obtained by taking inverses of both sides of \cref{AOSreflect}.
\end{proof}

\subsection{The twisted affine oriented skein category\label{SS:tAOScat}}

Let $\sigma$ be the strict braided monoidal involution of $\OScat(z,t)$ that reverses the orientation of every strand.  This is the involution denoted $\Theta$ in \cite[(2.12)]{SSS26}.  We first describe $\AOScat_\sigma(z,t) := \Aff_\sigma(\OScat(z,t))$ as a monoidal category in the spirit of \cref{lion+}. 

\begin{prop} \label{tAOSmonoidal}
    The category $\AOScat_\sigma(z,t)$ is isomorphic to the $\kk$-linear strict monoidal category obtained from $\OScat(z,t)$ by adjoining an invertible morphism
    \[
        \togupdown \colon \downobj \to \upobj,
    \]
    whose inverse we denote $\togdownup$, subject to the relation
    \begin{equation}\label{togcross1}

        \ .
    \end{equation}
    Alternatively, it is generated by an invertible morphism
    \[
        \togdownup \colon \upobj \to \downobj,
    \]
    whose inverse we denote $\togupdown$, subject to the relation
    \[
        %
        \ .
    \]
\end{prop}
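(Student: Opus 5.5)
The plan is to specialize \cref{lion+} and then eliminate the redundant generator, following the proof of \cref{AOSpresent} and the pattern of \cref{monpresrem}\cref{monpresrem2}.

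\textbf{Step 1: apply \cref{lion+}.} Take $S_0=\{\upobj,\downobj\}$, so the objects of $\OScat(z,t)$ are freely generated by $S_0$. Take $S_1$ to be the generating crossings together with the four cups and caps. The upward crossings are braidings, so $S_1'$ consists of the cups and caps only. Since $\sigma$ interchanges $\upobj$ and $\downobj$, \cref{lion+} presents $\AOScat_\sigma(z,t)$ by two invertible dots,
\[
\xi_\upobj\colon\downobj\to\upobj \qquad\text{and}\qquad \xi_\downobj\colon\upobj\to\downobj,
\]
subject to the following relations.
\begin{enumerate}
\item The first relation in \cref{lion+rels}, for the four pairs $X,Y\in S_0$. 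These are dot-sliding relations through crossings of all orientations.
\item The naturality relation in \cref{lion+rels}, for $f$ each of the four cups and caps. Since $\sigma$ reverses orientation, each such relation says that a dot on one leg of a cup or cap, with the orientation flipped, can be moved to the other leg.
\end{enumerate}
I will set $\togupdown:=\xi_\upobj$, so that its inverse $\togdownup$ is $\xi_\upobj^{-1}$.

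\textbf{Step 2: eliminate $\xi_\downobj$.} The naturality relation for a single cap, say $\rightcap$ (whose $\sigma$-image is $\leftcap$), can be solved for $\xi_\downobj$. Bending with the opposite cup and using the zigzag relations expresses $\xi_\downobj$ as the rotation of $\xi_\upobj^{-1}$ through a cup and a cap. This is the analogue of \cref{cranberry}, and there may be no scalar, since no curl appears. I will use this to \emph{define} the downward dot in the new presentation, exactly as \cref{tornado} did.

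\textbf{Step 3: show all remaining relations follow from \cref{togcross1}.} First, the three other cup/cap naturality relations should hold automatically, given Step 2's definition of $\xi_\downobj$ and strict pivotality; this is the analogue of the $\rightcup$, $\leftcap$ and $\leftcup$ cases in the proof of \cref{AOSpresent}. Second, $\xi_\downobj^{-1}$ should agree with the rotation of $\xi_\upobj$; this is the analogue of \cref{lemon,cato2}. Third, of the four dot-slide relations, I expect the one for $X=\upobj$, $Y=\upobj$ (dot $\togupdown$ sliding through an up/down mixed crossing, since $X^\sigma=\downobj$) to be \cref{togcross1} itself. The other three should be obtained from it by rotating the crossings with cups and caps via \cref{venom1}, and by inverting both sides, as in the verification of \cref{alarm}.

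\textbf{Step 4: build the inverse and conclude.} Conversely, I will construct a monoidal functor from the new presentation to $\AOScat_\sigma(z,t)$, using that \cref{togcross1} holds there as an instance of the first relation in \cref{jaguar+}. I will then check that the two composites fix all generators; for the downward dot this uses the formula from Step 2 holding in $\AOScat_\sigma(z,t)$. The alternative presentation follows by inverting the generator and both sides of \cref{togcross1}.

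\textbf{Main obstacle.} I expect the hard part to be Step 3's reduction of the mixed-orientation dot-slides: tracking orientations and which label $X$ versus $X^\sigma$ each strand carries after rotation. I also need to confirm that no stray factor of $t^{\pm1}$ from curls \cref{ocurlinv} arises. I would first attempt the direct isotopy computation modeled on the proof of the third and fourth relations in \cref{alarm}.
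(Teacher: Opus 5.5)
Your overall plan (specialize \cref{lion+}, define the downward dot as a rotation of $\togdownup$, verify \cref{lion+rels} in the presented category, then build the inverse functor) matches the paper's. However, Step~3 has a genuine gap.

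\textbf{The gap.} You assert that once $\xi_{\downobj}$ is defined from the $\rightcap$ naturality relation, the other three cup/cap naturality relations follow from strict pivotality. This is correct for $\rightcup$. If $\rightcap\circ(\xi_{\upobj}\otimes\xi_{\downobj})=\leftcap$, then $(\xi_{\downobj}^{-1}\otimes\xi_{\upobj}^{-1})\circ\rightcup$ satisfies the zigzag identities with $\leftcap$, so it equals $\leftcup$. It does not work for $\leftcap$, and hence not for $\leftcup$. With your definition, the relation $\leftcap\circ(\xi_{\downobj}\otimes\xi_{\upobj})=\rightcap$ says exactly that the \emph{other} $180^\circ$ rotation of $\togdownup$ also equals $\xi_{\downobj}$. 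In other words, it says the left and right duals of the toggle coincide. Pivotality of $\OScat(z,t)$ constrains only its own morphisms, not the adjoined generator; for a freely adjoined invertible morphism $\downobj\to\upobj$, the two rotations are in general different. So this equality must be derived from \cref{togcross1}, together with the relations of $\OScat(z,t)$ connecting leftward and rightward cups and caps. That derivation is the first step of the paper's proof, \cref{plug}. From it the paper obtains \cref{beatz}, which moves a toggle around a cup or cap, and then all four naturality relations \cref{train}. Without this step, your functor out of $\AOScat_\sigma(z,t)$ is not shown to be well defined.

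\textbf{Where the difficulty actually lies.} You have placed the main obstacle in the wrong spot. Write $S(X,Y)$ for the first relation in \cref{lion+rels}; it says that $\beta_{Y,-}$ is natural with respect to $\xi_X$. The mixed dot-slide relations are comparatively cheap:
\begin{enumerate}
\item \cref{togcross1} is $S(\upobj,\upobj)$.
\item Bending only the undotted strand turns $S(X,Y)$ into $S(X,Y^\sigma)$, as in \cref{tAOSbend}. This gives $S(\upobj,\downobj)$.
\item By \cref{hexagon}, the morphisms $g$ for which every $\beta_{Y,-}$ is natural form a monoidal subcategory closed under inverses. This subcategory contains $\togdownup$ and all cups and caps. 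Your $\xi_{\downobj}$ is a composite of these, so $S(\downobj,Y)$ follows.
\end{enumerate}
The paper instead rotates all of \cref{togcross1} and applies \cref{beatz}.

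Likewise, the fact that $\xi_{\downobj}^{-1}$ is the same rotation of $\xi_{\upobj}$ is automatic, because taking duals with fixed duality data is functorial. No analogue of \cref{lemon,cato2} is needed. The one nontrivial consequence of \cref{togcross1} that your plan must supply is the coincidence of the two duals of the toggle.
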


\begin{proof}
    Let $\Dcat$ denote the category presented as in the statement of the proposition.  We draw its string diagrams in cyan to distinguish them from diagrams in $\AOScat_\sigma(z,t)$.  We define $\togdownup[present] := \left( \togupdown[present] \right)^{-1} \colon \upobj[cyan] \to \downobj[cyan]$.  First note that
    \begin{equation}\label{plug}
        %
        \ .
    \end{equation}
    The first and last diagrams are the left and right duals, respectively, of the generator $\togupdown$.  Hence these two duals coincide.  Taking inverses gives the same equality for the two duals of $\togdownup$.
    
    By \cref{lion+}, $\AOScat_\sigma(z,t)$ is generated by invertible morphisms
    \begin{equation} \label{tAOSdots}
        %
        \colon \upobj \to \downobj,
    \end{equation}
    whose inverses we denote by
    \begin{equation}
        %
        \right)^{-1}
        \colon \downobj \to \upobj,
    \end{equation}
    subject to the relations \cref{lion+rels}, where $S_0 = \{\upobj,\downobj\}$ and $S_1' = \{\rightcap, \rightcup, \leftcap, \leftcup\}$.  Define
    \begin{equation}\label{sherbrooke}
        %
        \ .
    \end{equation}
    
    We claim that
    \begin{equation}\label{beatz}
        %
        \ ,
    \end{gather*}
    proving the first and third relations in \cref{beatz}.  The proofs of the second and fourth are similar, using the analogue of \cref{plug} for $\togdownup$.

    We claim that there is an isomorphism of $\kk$-linear strict monoidal categories
    \[
        \AOScat_\sigma(z,t) \to \Dcat
    \]
    that sends every generator of $\OScat(z,t)$ to the corresponding generator in $\Dcat$, and maps the positive dots as follows:
    \[
        %
        \ .
    \]
    To prove this, we must show that the cyan versions of the relations \cref{lion+rels} hold, where $S_0 = \{\upobj[cyan],\downobj[cyan]\}$ and $S_1' = \{\rightcap[present], \rightcup[present], \leftcap[present], \leftcup[present]\}$.
    
    Sliding the left positive dot to the right using \cref{beatz}, then cancelling inverse dots, gives
    \begin{equation}\label{train}
        %
        \ ,\qquad
        X \in \{\upobj[cyan],\downobj[cyan]\}.
    \end{equation}
    Thus, the last relation in \cref{lion+rels} holds for $f \in \{\rightcap[present], \rightcup[present], \leftcap[present], \leftcup[present]\}$.
    
    For $X,Y \in \{\upobj[cyan],\downobj[cyan]\}$, let $S(X,Y)$ denote the first relation in \cref{lion+rels}.   Thus, \cref{togcross1} is $S(\upobj[cyan],\upobj[cyan])$.  We must recover the other three cases.  Assuming $S(X,Y)$, we have
    \begin{equation} \label{tAOSbend}
        %
        \ ,
    \]
    which is $S(X,Y^\sigma)$.

    Taking left duals of both sides of \cref{togcross1}, by bending the boundary strands through $180\degree$ using cups and caps, and applying \cref{beatz} to the dotted strand gives
    \[
        %
        \ .
    \]
    Composing on the bottom with
    \(
        %
    \)
    and on the top with
    \(
        %
    \)
    then gives $S(\downobj[cyan],\downobj[cyan])$.  The preceding bending calculation then gives $S(\downobj[cyan],\upobj[cyan])$.  Thus, the first relation in \cref{lion+rels} holds for all choices of $X,Y \in \{\upobj[cyan], \downobj[cyan]\}$, so the proposed assignment defines a $\kk$-linear strict monoidal functor.

    Conversely, sending
    \[
        %
    \]
    and fixing all morphisms of $\OScat(z,t)$ defines a $\kk$-linear strict monoidal functor in the opposite direction, since \cref{togcross1} holds in $\AOScat_\sigma(z,t)$ by \cref{lion+}.  Reversing the argument surrounding \cref{train} shows that taking $f = \leftcap$ in \cref{lion+rels}, together with cancellation of inverse dots, gives
    \[
        %
        \ .
    \]
    This is the same formula used in \cref{sherbrooke} to define
    \(
        %
    \).
    Hence both composites fix the generating dots and all morphisms of $\OScat(z,t)$, so the functors are mutually inverse.
    
    The alternate presentation is obtained by taking inverses of both sides of \cref{togcross1}.
\end{proof}

We now give a presentation of $\AOScat_\sigma(z,t)$ as a module category.

\begin{prop} \label{tAOSmodule}
    As a right $\OScat(z,t)$-module, $\AOScat_\sigma(z,t)$ is generated by the invertible morphism
    \[
        \togupdown \colon \downobj \to \upobj,
    \]
    whose inverse we denote $\togdownup$, subject to the relation
    \begin{equation} \label{tAOSreflect+}
        %
        \ .
    \end{equation}
    Alternatively, it is generated by the invertible morphism
    \[
        \togdownup \colon \upobj \to \downobj,
    \]
    whose inverse we denote $\togupdown$, subject to the relation
    \[
        %
        \ .
    \]
\end{prop}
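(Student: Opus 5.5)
The plan mirrors the proof of \cref{AOSpresent}, with \cref{tAOSmonoidal} supplying the dictionary between toggle dots and the positive dots of the twisted affinization. Let $\Dcat$ be the right $\OScat(z,t)$-module presented as in the statement, drawn in cyan. Since $\upobj^\sigma=\downobj$ and $\downobj^\sigma=\upobj$, \cref{swamp+} with $S_0=\{\upobj,\downobj\}$ and $S_1'=\{\rightcap,\rightcup,\leftcap,\leftcup\}$ presents $\AOScat_\sigma(z,t)$ as a right module. The generators are the two positive dots $\downobj\to\upobj$ and $\upobj\to\downobj$. The relations are the four twisted reflection equations $S(X,Y)$ from the first relation in \cref{life} and the naturality relation for the four cups and caps.

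In $\Dcat$, I define the positive dot on $\downobj$ to be $\togupdown$. Following \cref{sherbrooke}, I define the positive dot on $\upobj$ to be the appropriate scalar power of $t$ times the curl built from $\togdownup$ bent by a cup and a cap. The scalar is whatever makes the analogue of \cref{tornado} hold, and it is fixed by the computation in $\AOScat_\sigma(z,t)$ at the end of the proof of \cref{tAOSmonoidal}.

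\begin{enumerate}
\item \textbf{Inverses.} Composing \cref{tAOSreflect+} with a toggle on the top and bottom and cancelling gives analogues of \cref{lemon,cato2}. These show that the two defined dots have the expected inverses and satisfy a one-crossing sliding identity.

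\item \textbf{Cup and cap naturality.} For $f$ a cup or cap, I verify the second relation in \cref{life} in $\Dcat$. As in the display following \cref{alarm}, I apply the curl relations \cref{ocurlbub}, then the definition of the dot on $\upobj$, then cancel inverse dots. This is the twisted analogue of \cref{train}.

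\item \textbf{Reflection equations.} The relation $S(\downobj,\downobj)$ is \cref{tAOSreflect+} itself. I obtain $S(\downobj,\upobj)$ by rewriting the dot on $\upobj$ as a bent toggle and using \cref{venom1,venom2} to slide the cups and caps through the crossings, exactly as in the second-relation computation for \cref{AOSpresent}. I get $S(\upobj,\downobj)$ the same way, and $S(\upobj,\upobj)$ by first inverting a previously proved relation, as in the step via \cref{lemon2}.
\end{enumerate}

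These steps give a well-defined strict module functor $\bF\colon\AOScat_\sigma(z,t)\to\Dcat$. In the other direction, $\bG\colon\Dcat\to\AOScat_\sigma(z,t)$ sends $\togupdown$ to the positive dot on $\downobj$. It is well defined because \cref{tAOSreflect+} holds in $\AOScat_\sigma(z,t)$, by \cref{jungle} applied to $X=Y=\downobj$. The composite $\bF\bG$ fixes the generator of $\Dcat$. The composite $\bG\bF$ fixes the positive dot on $\upobj$, because its defining formula also holds in black in $\AOScat_\sigma(z,t)$, by the final computation in the proof of \cref{tAOSmonoidal}. Hence $\bF$ and $\bG$ are mutually inverse. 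The alternative presentation follows by inverting both sides of \cref{tAOSreflect+}.

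The main obstacle is step (c). In the untwisted case, step (b) only used curl relations, but here the reflection equations mix strand orientations. One must check carefully that, after bending the $\upobj$-dot through the crossings, the twist $\sigma$ reverses exactly the orientations needed for \cref{tAOSreflect+} or its inverse to apply, and that the powers of $t$ coming from the curls cancel. If the direct slide fails, I would instead adapt the bending argument \cref{tAOSbend}. I expect the module-category version of it still to work, because its bends occur on the right-hand strand, where the right $\OScat(z,t)$-action is available, and because the left-dual step only requires conjugating by cups and caps on the right.
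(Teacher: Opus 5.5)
Your proposal is correct and follows essentially the paper's route. You present $\AOScat_\sigma(z,t)$ by \cref{swamp+}, define the second positive dot as a $t^{-1}$-scaled curl on the inverse toggle (the paper's \cref{tAOScurls}, the analogue of \cref{tornado}), and check invertibility, the cyan relations and both composites as in the proof of \cref{AOSpresent}, which is all the paper itself does.

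The slips are notational. With the paper's convention $\xi_X\colon X^\sigma\to X$, the toggle $\togupdown$ is the positive dot on $\upobj$, not on $\downobj$. So \cref{tAOSreflect+} is $S(\upobj,\upobj)$, i.e.\ \cref{jungle} with $X=Y=\upobj$. Also, the black curl formula for $\xi_{\downobj}$ needed for $\bG\bF=\id$ does not come from the end of the proof of \cref{tAOSmonoidal} alone. That computation gives the duality formula \cref{sherbrooke}, and you then need the curl relation in $\OScat(z,t)$, as in the black computation at the end of the proof of \cref{AOSpresent}.
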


\begin{proof}
    Let $\Dcat$ denote the category presented as in the statement of the proposition.  We draw its string diagrams in cyan to distinguish them from diagrams in $\AOScat_\sigma(z,t)$.
    
    Define
    \begin{equation} \label{tAOScurls}
        %
        \ .
    \end{equation}
    An argument analogous to the one surrounding \cref{cato1,cato2} shows that
    \[
        %
        \right)^{-1}.
    \]
    
    By \cref{swamp+}, $\AOScat_\sigma(z,t)$ is generated by the two invertible positive dots \cref{tAOSdots}, subject to the relations \cref{life} for $X,Y \in \{\upobj,\downobj\}$ and $f \in \{\rightcap, \rightcup, \leftcap, \leftcup\}$.  We claim that there is an isomorphism of $\OScat(z,t)$-module categories
    \[
        \AOScat_\sigma(z,t) \to \Dcat,\qquad
        %
        \ .
    \]
    To prove this, we must show that the cyan versions of the relations \cref{life} hold for $X,Y \in \{\upobj[cyan],\downobj[cyan]\}$ and $f \in \{\rightcap[present], \rightcup[present], \leftcap[present], \leftcup[present]\}$.  The argument is analogous to that used in the proof of \cref{AOSpresent} and so we omit the details here.
    
    The alternate presentation is obtained by taking inverses of both sides of \cref{tAOSreflect+}.
\end{proof}

\subsection{The disoriented skein category\label{SS:DScat}}

Now fix $q \in \kk^\times$ such that $z = q-q^{-1}$.  The \emph{disoriented skein category} of \cite[Def.~2.1]{SSS26}, denoted $\DScat(q,t)$, is the right $\OScat(q-q^{-1},t)$-module generated by mutually inverse \emph{toggles}
\[
    \togupdown \colon \downobj \to \upobj, \qquad
    \togdownup \colon \upobj \to \downobj,
\]
subject to the relation \cref{tAOSreflect+} and the relations
\begin{equation}\label{DSold}
    \begin{tikzpicture}[anchorbase]
        \draw[<-] (-0.2,0.2) to[out=down,in=135] (0.2,-0.3);
        \draw[over,->] (-0.2,-0.3) to[out=45,in=down] (0.2,0.2) arc(0:180:0.2);
        \opendot{-0.2,0.2};
    \end{tikzpicture}
    \ = q\
    \begin{tikzpicture}[anchorbase]
        \draw[->] (0.2,-0.3) -- (0.2,0.1) arc(0:180:0.2) -- (-0.2,0);
        \draw[<-] (-0.2,0) -- (-0.2,-0.3);
        \opendot{-0.2,0};
    \end{tikzpicture}
    \ ,\qquad
   \begin{tikzpicture}[anchorbase]
        \draw[<-] (-0.2,0.3) to[out=-45,in=up] (0.2,-0.2) arc(360:180:0.2);
        \draw[over,->] (-0.2,-0.2) to[out=up,in=-135] (0.2,0.3);
        \opendot{-0.2,-0.2};
    \end{tikzpicture}
    \ = q\
    \begin{tikzpicture}[anchorbase]
        \draw[<-] (0.2,0.3) -- (0.2,-0.1) arc(360:180:0.2) -- (-0.2,0);
        \draw[->] (-0.2,0) -- (-0.2,0.3);
        \opendot{-0.2,0};
    \end{tikzpicture}
    \ .
\end{equation}

\begin{theo} \label{DStheorem}
    As $\OScat(q-q^{-1},t)$-module categories, $\DScat(q,t)$ is isomorphic to the quotient of $\AOScat_\sigma(q-q^{-1},t)$ by the relation
    \begin{equation} \label{gandalf}
        \begin{tikzpicture}[anchorbase]
            \draw[<->] (0,-0.35) -- (0,0.35);
            \negdot{0,0};
        \end{tikzpicture}
        = q^{-1} t\
        \begin{tikzpicture}[anchorbase]
            \draw[<->] (0,-0.35) -- (0,0.35);
            \posdot{0,0};
        \end{tikzpicture}
        \ .
    \end{equation}
    Equivalently, in the presentation of $\AOScat_\sigma(q-q^{-1},t)$ given by \cref{tAOSmodule}, it is the quotient of $\AOScat_\sigma(q-q^{-1},t)$ by either one of the two relations in \cref{DSold}.
\end{theo}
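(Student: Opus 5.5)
By \cref{tAOSmodule}, $\AOScat_\sigma(q-q^{-1},t)$ is the right $\OScat(q-q^{-1},t)$-module generated by the toggles subject only to \cref{tAOSreflect+}. Also, $\DScat(q,t)$ is defined by the same generators subject to \cref{tAOSreflect+} together with both relations of \cref{DSold}. So $\DScat(q,t)$ is, by construction, the quotient of $\AOScat_\sigma$ by the two relations \cref{DSold}. The plan is to show that, in the presence of \cref{tAOSreflect+}, each relation in \cref{DSold} is equivalent to \cref{gandalf}. Then each of them alone implies the other, and both presentations of the quotient agree.

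First I would make \cref{gandalf} explicit in toggle language. In $\AOScat_\sigma$ the positive dot on $\downobj$ is the toggle $\downobj^\sigma=\upobj\to\downobj$, i.e.\ $\togdownup$. The positive dot on $\upobj$ is obtained from it, as in \cref{tAOScurls}, by a curl construction: the toggle $\togdownup$ or $\togupdown$ placed on a strand closed off with a cup/cap and a crossing, rescaled by $t^{\pm1}$. This is the analogue of \cref{tornado,sherbrooke}. The relation \cref{gandalf}, read on the appropriately oriented strand, then says that the inverse of one dot equals $q^{-1}t$ times the other. After substituting the curl formula, this reads: a toggle followed by a crossing closed with a cap equals $q\,t^{-1}\cdot t$ (up to the normalization in \cref{tAOScurls}) times the toggle followed by a plain cap. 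That is essentially the first relation in \cref{DSold}. So the first step is to verify carefully, using \cref{venom1,ocurlinv} and the curl definitions, that \cref{gandalf} is equivalent to the first relation of \cref{DSold}, after composing with the inverse toggle and straightening with the zigzag relations.

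Second, I would show the first relation of \cref{DSold} implies the second, and conversely. The natural approach is to rotate the first relation by $180\degree$ using pivotality of $\OScat$, i.e.\ take its dual by bending the boundary strands with cups and caps. Then I would use \cref{plug}: the two duals of a toggle coincide and equal the inverse toggle up to the identification used in \cref{tAOSmonoidal}. This converts a cap with a toggle into a cup with a toggle, and swaps the crossing type. Alternatively, since \cref{gandalf} is a relation between two dots on one strand, it can be inverted and conjugated. Its inverse is again an instance of \cref{gandalf} with the roles of the two dots exchanged. Translating this inverted form back through the curl formula should produce the second relation of \cref{DSold}. I would check that the scalars work out, namely that $(q^{-1}t)^{-1}$ combines with the $t^{\pm1}$ from the curls and from the relations in \cref{ocurlinv} to give $q$.

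The main obstacle is the scalar and crossing bookkeeping in these translations. Only one type of crossing appears in each relation of \cref{DSold}, so I would need the twisted reflection relation \cref{tAOSreflect+}, or the skein relation with $z=q-q^{-1}$, to switch crossings if a mismatch arises. That is the step where the identity $z=q-q^{-1}$ might enter. I would first try to avoid the skein relation entirely by choosing the curl representative of the positive dot whose crossing matches \cref{DSold}. Once the equivalences are established, the identity maps on generators give mutually inverse module functors between the quotients, because the relations they must respect coincide.
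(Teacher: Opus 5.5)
Your architecture is the paper's. By \cref{tAOSmodule}, $\DScat(q,t)$ is by construction the quotient of $\AOScat_\sigma(q-q^{-1},t)$ by both relations in \cref{DSold}, and the proof shows that each of these relations, on its own, is equivalent to \cref{gandalf}. (The paper gets one relation to \cref{gandalf} and the other to its inverse.)

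However, the dictionary you start from is backwards. Since the whole content of the proof is this translation, and it is what fixes the scalar $q^{-1}t$, this is not cosmetic. In \cref{tAOSmodule} the generator $\togupdown$ is the positive dot $\xi_{\upobj}\colon\downobj\to\upobj$: \cref{tAOSreflect+} is the twisted reflection equation of \cref{life} with $X=Y=\upobj$, just as \cref{togcross1} is $S(\upobj,\upobj)$ in \cref{tAOSmonoidal}. Hence $\togdownup=\xi_{\upobj}^{-1}$ is the negative dot on $\upobj$, not the positive dot on $\downobj$. It is $\xi_{\downobj}^{\pm1}$ that is given by curls, via \cref{tAOScurls}:
\begin{itemize}
\item the curl with $\togupdown$ at its leftmost point equals $t^{-1}\xi_{\downobj}^{-1}$;
\item the curl of the other crossing type with $\togdownup$ there equals $t\,\xi_{\downobj}$.
\end{itemize}
The morphisms $\xi_{\downobj}$ and $\togdownup=\xi_{\upobj}^{-1}$ are different morphisms $\upobj\to\downobj$ of $\AOScat_\sigma$. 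Relating them is exactly what \cref{gandalf} does, since it says $\xi_{\downobj}=qt^{-1}\,\xi_{\upobj}^{-1}$.

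With the correct dictionary the paper's proof is two short chains, using only right-module operations (tensor on the right with a strand, compose with a cup or cap on the right, straighten with the zigzag relations, all reversible).
\begin{itemize}
\item For the first relation in \cref{DSold}, this gives ``curl $=q\,\togdownup$''. By \cref{tAOScurls} this reads $t\,\xi_{\downobj}=q\,\xi_{\upobj}^{-1}$, which is the inverse of \cref{gandalf}.
\item For the second relation, first apply the skein relation on two upward strands, $\beta-\beta^{-1}=(q-q^{-1})1_{\upobj\otimes\upobj}$. This switches the crossing and replaces $q$ by $q^{-1}$, so $z=q-q^{-1}$ is genuinely used, as you anticipated. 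Closing off on the right then gives ``curl $=q^{-1}\togupdown$'', i.e.\ $t^{-1}\xi_{\downobj}^{-1}=q^{-1}\xi_{\upobj}$, which is \cref{gandalf}.
\end{itemize}

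You should also drop the rotation route in your second step, for three reasons.
\begin{itemize}
\item The quotient is a quotient of right module categories, so a relation may only be composed with morphisms and tensored on the right. Bending the left boundary strand around requires tensoring on the left, which is not available.
\item The two relations in \cref{DSold} are top--bottom reflections of each other (the toggle sits on the leftmost strand in both), not half-turn rotations.
\item \cref{plug} concerns the two duals of the toggle in the monoidal category; it does not identify them with the inverse toggle.
\end{itemize}
What yields ``either one implies the other'' is proving each relation separately equivalent to \cref{gandalf} by right-module operations, as above.
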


\begin{proof}
    This follows from the fact that
    \[
        \begin{tikzpicture}[anchorbase]
            \draw[<-] (-0.2,0.3) to[out=-45,in=up] (0.2,-0.2) arc(360:180:0.2);
            \draw[over,->] (-0.2,-0.2) to[out=up,in=-135] (0.2,0.3);
            \opendot{-0.2,-0.2};
        \end{tikzpicture}
        \ = q\
        \begin{tikzpicture}[anchorbase]
            \draw[<-] (0.2,0.3) -- (0.2,-0.1) arc(360:180:0.2) -- (-0.2,0);
            \draw[->] (-0.2,0) -- (-0.2,0.3);
            \opendot{-0.2,0};
        \end{tikzpicture}
        \Longleftrightarrow
        \begin{tikzpicture}[anchorbase]
            \draw[->] (-0.2,-0.2) to[out=up,in=-135] (0.2,0.3);
            \draw[over,<-] (-0.2,0.3) to[out=-45,in=up] (0.2,-0.2) arc(360:180:0.2);
            \opendot{-0.2,-0.2};
        \end{tikzpicture}
        \ = q^{-1}\
        \begin{tikzpicture}[anchorbase]
            \draw[<-] (0.2,0.3) -- (0.2,-0.1) arc(360:180:0.2) -- (-0.2,0);
            \draw[->] (-0.2,0) -- (-0.2,0.3);
            \opendot{-0.2,0};
        \end{tikzpicture}
        \Longleftrightarrow
        \begin{tikzpicture}[centerzero]
            \draw[<-] (0,-0.4) to[out=up,in=0] (-0.25,0.15) to[out=180,in=up] (-0.4,0);
            \draw[over,->] (-0.4,0) to[out=down,in=180] (-0.25,-0.15) to[out=0,in=down] (0,0.4);
            \opendot{-0.4,0};
        \end{tikzpicture}
        \ = q^{-1}
        \begin{tikzpicture}[centerzero]
            \draw[<->] (0,-0.4) -- (0,0.4);
            \opendot{0,0};
        \end{tikzpicture}
        \overset{\cref{tAOScurls}}{\Longleftrightarrow} t^{-1}\
        \begin{tikzpicture}[anchorbase]
            \draw[<->] (0,-0.35) -- (0,0.35);
            \negdot{0,0};
        \end{tikzpicture}
        = q^{-1}\
        \begin{tikzpicture}[anchorbase]
            \draw[<->] (0,-0.35) -- (0,0.35);
            \posdot{0,0};
        \end{tikzpicture}
        \ ,
    \]
    and
    \[
        \begin{tikzpicture}[anchorbase]
            \draw[<-] (-0.2,0.2) to[out=down,in=135] (0.2,-0.3);
            \draw[over,->] (-0.2,-0.3) to[out=45,in=down] (0.2,0.2) arc(0:180:0.2);
            \opendot{-0.2,0.2};
        \end{tikzpicture}
        \ = q\
        \begin{tikzpicture}[anchorbase]
            \draw[->] (0.2,-0.3) -- (0.2,0.1) arc(0:180:0.2) -- (-0.2,0);
            \draw[<-] (-0.2,0) -- (-0.2,-0.3);
            \opendot{-0.2,0};
        \end{tikzpicture}
        \Longleftrightarrow
        \begin{tikzpicture}[centerzero]
            \draw[<-] (-0.4,0) to[out=down,in=180] (-0.25,-0.15) to[out=0,in=down] (0,0.4);
            \draw[over,->] (0,-0.4) to[out=up,in=0] (-0.25,0.15) to[out=180,in=up] (-0.4,0);
            \opendot{-0.4,0};
        \end{tikzpicture}
        \ = q\
        \begin{tikzpicture}[centerzero]
            \draw[->] (0,-0.4) -- (0,0);
            \draw[<-] (0,0) -- (0,0.4);
            \opendot{0,0};
        \end{tikzpicture}
        \overset{\cref{tAOScurls}}{\Longleftrightarrow}
        t\ 
        \begin{tikzpicture}[anchorbase]
            \draw[-<-=0.55] (0,0) -- (0,-0.35);
            \draw[-<-=0.55] (0,0) -- (0,0.35);
            \posdot{0,0};
        \end{tikzpicture}
        =
        q\
        \begin{tikzpicture}[anchorbase]
            \draw[-<-=0.55] (0,0) -- (0,-0.35);
            \draw[-<-=0.55] (0,0) -- (0,0.35);
            \negdot{0,0};
        \end{tikzpicture}
        \ ,
    \]
    which is obtained from \cref{gandalf} by taking inverses of both sides.
\end{proof}

\begin{rem}
    The normalization \cref{gandalf} agrees with the tensor-pair cylinder $\Theta$-twist constructed in \cite[\S2.5]{SSSsub26}.  \Cref{DStheorem} shows that the presentation of $\DScat(q,t)$ given in \cite[Def.~2.1]{SSS26} has redundant relations.  One need only impose one of the two relations in \cref{DSold}; the other then holds automatically.
\end{rem}

\begin{rem}
    The quotient in \cref{DStheorem} is proper over an integral domain whenever $q+q^{-1} \ne 0$.  Indeed, applying \cref{DStheorem} with $q$ replaced by $-q^{-1}$ gives another quotient of the same twisted affinization, in which
    \[
        \begin{tikzpicture}[centerzero]
            \draw[->-=0.7] (0,-0.6) -- (0,-0.2);
            \draw[-<-=0.5] (0,-0.2) -- (0,0.2);
            \draw[->] (0,0.2) -- (0,0.6);
            \posdot{0,-0.2};
            \posdot{0,0.2};
        \end{tikzpicture}
        \ = -q^{-1}t^{-1}\
        \begin{tikzpicture}[centerzero]
            \draw[->] (0,-0.6) -- (0,0.6);
        \end{tikzpicture}
        \ .
    \]
    By the basis theorem \cite[Thm.~7.1]{SSS26}, the identity $1_{\upobj}$ spans a free rank-one endomorphism space in $\DScat(-q^{-1},t)$.  Hence
    \[
        \begin{tikzpicture}[centerzero]
            \draw[->-=0.7] (0,-0.6) -- (0,-0.2);
            \draw[-<-=0.5] (0,-0.2) -- (0,0.2);
            \draw[->] (0,0.2) -- (0,0.6);
            \posdot{0,-0.2};
            \posdot{0,0.2};
        \end{tikzpicture}
        \ \ne qt^{-1}\
        \begin{tikzpicture}[centerzero]
            \draw[->] (0,-0.6) -- (0,0.6);
        \end{tikzpicture}
    \]
    in the twisted affinization.
\end{rem}


\bibliographystyle{alphaurl}
\bibliography{ABMC}

\end{document}